\theoremstyle{plain}
\newtheorem{thm}{Theorem}[subsection]
\newtheorem{prop}[thm]{Proposition}
\newtheorem{lemma}[thm]{Lemma}
\newtheorem{conj}[thm]{Conjecture}
\theoremstyle{definition}
\newtheorem{defn}[thm]{Definition}
\newtheorem*{defn*}{Definition}
\newtheorem{example}[thm]{Example}
\newtheorem*{example*}{Example}
\newtheorem{construction}[thm]{Construction}
\theoremstyle{remark}
\newtheorem{rmk}[thm]{Remark}
\newtheorem*{rmk*}{Remark}
\newtheorem*{sociocomment}{Sociological comment}
\newcommand{\field}[1]{\mathbbm{#1}}
\newcommand{\C}{\field{C}}
\newcommand{\F}{\field{F}}
\newcommand{\N}{\field{N}}
\newcommand{\Q}{\field{Q}}
\newcommand{\Z}{\field{Z}}
\newcommand{\ideal}[1]{\mathfrak{#1}}
\newcommand{\m}{\ideal{m}}
\newcommand{\n}{\ideal{n}}
\newcommand{\p}{\ideal{p}}
\newcommand{\ia}{\ideal{a}}
\newcommand{\cal}{\mathcal}
\newcommand{\cA}{\cal{A}}
\newcommand{\cC}{\cal{C}}
\newcommand{\cF}{\cal{F}}
\newcommand{\cG}{\cal{G}}
\newcommand{\cI}{\cal{I}}
\newcommand{\cM}{\cal{M}}
\newcommand{\cP}{\cal{P}}
\newcommand{\cR}{\cal{R}}
\newcommand{\cS}{\cal{S}}
\newcommand{\cT}{\cal{T}}
\DeclareMathOperator{\eq}{\!eq}
\DeclareMathOperator{\h}{\!h}
\DeclareMathOperator{\fg}{\!fg}
\DeclareMathOperator{\Max}{Max}
\DeclareMathOperator{\Spec}{Spec}
\DeclareMathOperator{\depth}{depth}
\DeclareMathOperator{\Hom}{Hom}
\DeclareMathOperator{\Jac}{Jac}
\DeclareMathOperator{\chr}{char}
\DeclareMathOperator{\core}{core}
\newcommand{\bfc}{{\rm{bf}}}
\newcommand{\spc}{\rm{sp}}
\newcommand{\rr}[1]{\widetilde{{#1}}}
\newcommand{\ra}{\rightarrow}
\newcommand{\rc}{{\rm c}}
\newcommand{\rcl}{{\rm cl}}
\newcommand{\rd}{{\rm d}}
\newcommand{\repf}{{\rm epf}}
\newcommand{\rt}{{\rm t}}
\newcommand{\rv}{{\rm v}}
\newcommand{\rw}{{\rm w}}
\newcommand{\bd}{{\mathbf d}}
\newcommand{\br}{{\mathbf r}}
\author{Neil Epstein}
\address{Universit\"at Osnabr\"uck \\ 
Institut f\"ur Mathematik \\ 
49069 Osnabr\"uck \\ Germany}
 \email{nepstein@uni-osnabrueck.de}
\title{A guide to closure operations in commutative algebra}
\date{August 30, 2011}
\begin{document}
\begin{abstract}
This article is a survey of closure operations on ideals in commutative rings, with an emphasis on structural properties and on using tools from one part of the field to analyze structures in another part.  The survey is broad enough to encompass the radical, tight closure, integral closure, basically full closure, saturation with respect to a fixed ideal, and the v-operation, among others.
\end{abstract}

 \thanks{The author was partially supported by a grant from the DFG (German Research Foundation)}
\subjclass[2010]{Primary 13-02; Secondary 13A15, 13A35, 13B22, 13A99}
\keywords{closure operation, tight closure, integral closure, star-operation}

\maketitle
\tableofcontents

\section{Introduction}

There have been quite a few books and survey articles on tight closure (e.g. \cite{HuTC, Hu-tcparam, Smintro, Sm-tcvan, Ho-tcharp}), on integral closure (e.g. \cite{Vasc-arith, Vasc-icbook, HuSw-book}), and on star-operations on integral domains (e.g. \cite{FoLo-Kronecker}, \cite[chapters 32 and 34]{Gil-MIT}), but as far as this author knows, no such article on closure operations in general.  However, several authors (e.g. \cite{nme-sp,  Br-Groth, Va-cl}) have recently found it useful to consider closure operations as a subject in itself, so I write this article as an attempt to provide an overall framework.  This article is intended both for the expert in one closure operation or another who wants to see how it relates to the rest, and for the lay commutative algebraist who wants a first look at what closure operations are.  For the most part, this article will not go into the reasons why any given closure operation is important.  Instead, I will concentrate on the structural aspects of closure operations, how closure operations arise, how to think about them, and how to analyze them.

The reader may ask: ``If the only closure operation I am interested in is $\rc$, why should I care about other closure operations?''  Among other reasons: the power of analogical thinking is central to what mathematicians do.  If the $\rd$-theorists have discovered or used a property of their closure operation $\rd$, the $\rc$-theorist may use this to investigate the analogous property for $\rc$, \emph{and may not have thought to do so without knowledge of $\rd$-closure}.  Morover, what holds trivially for one closure operation can be a deep theorem (or only hold in special cases) for another -- and vice versa.  A good example is persistence (see \ref{sub:persist}). 

In most survey articles, one finds a relatively well-defined subject and a more-or-less linear progression of ideas.  The subject exists as such in the minds of those who practice it before the article is written, and the function of the article is to introduce new people to the already extant system of ideas.  The current article serves a somewhat different function, as the ideas in this paper are not linked sociologically so much as axiomatically.  Indeed, there are at least three socially distinct groups of people studying these things, some of whom seem barely to be aware of each other's existence.  In this article, one of my goals is to bridge that gap.

The structure of the article is as follows: In \S \ref{sec:whatis}, I introduce the notion of closures, eleven typical examples, and some non-examples.  In \S\ref{sec:cons}, I exhibit six simple constructions and show how all the closure operations from \S\ref{sec:whatis} arise from these.  The next section, \S\ref{sec:props}, concerns properties that closures may have; it comprises more than 1/4 of the paper!  In it, we spend a good bit of time on star- and semi-prime operations, after which we devote a subsection each to forcing algebras, persistence, homological conjectures, tight closure-like properties, and (homogeneous) equational closures.  The short \S\ref{sec:red}  explores a tightly related set of notions involving what happens when one looks at the collection of \emph{sub}ideals that have the same closure as a given ideal.  In \S\ref{sec:rings}, we explore ring properties that arise from certain ideals being closed.  Finally in \S\ref{sec:modules}, we talk about various ways to extend to closures on (sub)\emph{modules}.  Beyond the material in \S\S \ref{sec:whatis} and \ref{sec:cons}, the reader may read the remaining sections in almost any order.

Throughout this paper, $R$ is a commutative ring with unity.  At this point, one would normally either say that $R$ will be assumed Noetherian, or that $R$ will not necessarily be assumed to be Noetherian.  However, one of the reasons for the gap mentioned above is that people are often scared off by such statements.  It is true that many of the examples I present here seem to work best (and are most studied) in the Noetherian context.   On the other hand, I have also included some of the main examples and constructions that are most interesting in the non-Noetherian case.  As my own training is among those who work mainly with Noetherian rings, it probably is inevitable that I will sometimes unknowingly assume a ring is Noetherian.  In any case, the article should remain accessible and interesting to all readers.

\section{What is a closure operation?}\label{sec:whatis}

\subsection{The basics}

\begin{defn}\label{def:closure}
Let $R$ be a ring.  A \emph{closure operation} $\rcl$ on a set of ideals $\cI$ of $R$ is a set map $\rm{cl}: \cI \ra \cI$ ($I \mapsto I^\rcl)$ satisfying the following conditions: \begin{enumerate}
\item\label{it:extend} (Extension) $I \subseteq I^\rcl$ for all $I \in \cI$.
\item\label{it:idem} (Idempotence) $I^\rcl= (I^\rcl)^\rcl$ for all $I \in \cI$.
\item\label{it:preserve} (Order-preservation) If $J \subseteq I$ are ideals of $\cI$, then $J^\rcl \subseteq I^\rcl$.
\end{enumerate}

If $\cI$ is the set of \emph{all} ideals of $R$, then we say that $\rcl$ is a closure operation \emph{on $R$}.

We say that an ideal $I \in \cI$ is \emph{$\rcl$-closed} if $I = I^\rcl$.
\end{defn}

As far as I know, this concept is due to E. H. Moore \cite{Moore-analysis}, who defined it (over a century ago!) more generally for subsets of a set, rather than ideals of a ring.  Moore's context was mathematical analysis.  His has been the accepted definition of ``closure operation'' in lattice theory and universal algebra ever since (e.g. \cite[V.1]{Bir-latbook} or \cite[7.1]{DaPr-latbook}).  Oddly, this general definition of closure operation does not seem to have gained currency in commutative algebra until the late 1980s \cite{Rat-Delta, OkRat-filtclosure}, although more special structures already had standard terminologies associated to them (see \ref{sub:starsemi}).

\begin{example}[Examples of closures]\label{ex:closures}  The reader is invited to find his/her favorite closure(s) on the following list.  Alternately, the list may be skipped and referred back to when an unfamiliar closure is encountered in the text.

\begin{enumerate}
\item The \emph{identity closure}, sending each ideal to itself, is a closure operation on $R$.  (In multiplicative ideal theory, this is usually called the \emph{$\rd$-operation}.)
\item The \emph{indiscrete closure}, sending each ideal to the unit ideal $R$, is also a closure operation on $R$.
\item The \emph{radical} is the first nontrivial example of a closure operation on an arbitrary ring $R$.  It may be defined in one of two equivalent ways.  Either \[
\sqrt{I} := \{f \in R \mid \exists \text{a positive integer } n \text{ such that } f^n \in I\}
\] or \[
\sqrt{I} := \bigcap \{\p \in \Spec R \mid I \subseteq \p\}.
\]
The importance of the radical is basic in the field of algebraic geometry, due to Hilbert's Nullstellensatz (cf. any introductory textbook on algebraic geometry).
\item Let $\ia$ be a fixed ideal of $R$.  Then \emph{$\ia$-saturation} is a closure operation on $R$.  Using the usual notation of $(- : \ia^\infty)$, we may define it as follows: \[
(I : \ia^\infty) := \bigcup_{n\in \N} (I : \ia^n) = \{r \in R \mid \exists n \in \N \text{ such that } \ia^n r \subseteq I \}
\]
This operation is important in the study of \emph{local cohomology}.  Indeed, $H^0_\ia(R/I) = \frac{(I : \ia^\infty)}{I}$.
\item The \emph{integral closure} is a closure operation as well.  One of the many equivalent definitions is as follows:  For an element $r\in R$ and an ideal $I$ of $R$,\footnote{Some may find my choice of notation surprising.  Popular notations for integral closure include $I_a$ and $\overline{I}$.  I avoid the first of these because it looks like a variable subscript, as the letter $a$ does not seem to stand for anything.  The problem with the second notation is that it is overly ambiguous.  Such notation can mean integral closure of rings, integral closure of ideals, a quotient module, and so forth.  So in my articles, I choose to use the $I^-$ notation to make it more consistent with the notation of other closure operations (such as tight, Frobenius, and plus closures).} $r \in I^-$ if there exist $n\in \N$ and $a_i \in I^i$ for $1\leq i \leq n$ such that \[
r^n + \sum_{i=1}^n a_i r^{n-i} = 0.
\]
Integral closure is a big topic.  See for instance the books \cite{HuSw-book, Vasc-icbook}.
\item Let $R$ be an integral domain.  Then \emph{plus closure} is a closure operation.  It is traditionally linked with tight closure (see below), and defined as follows:  For an ideal $I$ and an element $x\in R$, we say that $x \in I^+$ if there is some injective map $R \ra S$ of integral domains, which makes $S$ a finite $R$-module, such that $x \in IS$.  (See \ref{sub:fromcon}(6) for general $R$.)
\item Let $R$ be a ring of prime characteristic $p>0$.  Then \emph{Frobenius closure} is a closure operation on $R$.  To define this, we need the concept of bracket powers.  For an ideal $I$, $I^{[p^n]}$ is defined to be the ideal generated by all the $p^n$th powers of elements of $I$.  For an ideal $I$ and an element $x\in R$, we say that $x\in I^F$ if there is some $n\in \N$ such that $x^{p^n} \in I^{[p^n]}$.
\item Let $R$ be a ring of prime characteristic $p>0$.  Then \emph{tight closure} is a closure operation on $R$.  For an ideal $I$ and an element $x\in R$, we say that $x\in I^*$ if there is some power $e_0 \in \N$ such that the ideal $\bigcap_{e\geq e_0} (I^{[p^e]} : x^{p^e})$ is not contained in any minimal prime of $R$.
\item Let $R$ be a complete local domain.  For an $R$-algebra $S$, we say that $S$ is \emph{solid} if $\Hom_R(S,R) \neq 0$.  We define \emph{solid closure} on $R$ by saying that $x\in I^\star$ if $x\in IS$ for some solid $R$-algebra $S$.  (See \ref{sub:fromcon}(9) for general $R$.)
\item Let $\Delta$ be a multiplicatively closed set of ideals.  The \emph{$\Delta$-closure} \cite{Rat-Delta} of an ideal $I$ is $I^\Delta := \bigcup_{K \in \Delta} (IK :K)$.  Ratliff \cite{Rat-Delta} shows close connections between $\Delta$-closure and integral closure for appropriate choices of $\Delta$.
\item If $(R,\m)$ is local, and $I$ is $\m$-primary, then the \emph{basically full closure} \cite{HRR-bf} of $I$ is $I^\bfc := (I \m : \m)$.  (Note: This is a closure operation even for non-$\m$-primary ideals $I$.  However, only for $\m$-primary $I$ does it produce the smallest so-called ``basically full'' ideal that contains $I$.)
\end{enumerate}

Additional examples of closures include the $\rv$-, $\rt$-, and $\rw$-operations (\ref{sub:starsemi}), various tight closure imitators (\ref{sub:tcim}),  continuous and axes closures \cite{Br-cc}, natural closure \cite{nmeHo-cc}, and weak subintegral closure \cite{VitLe-wsi}. (See the references for more details on these last four.)
\end{example}

Some properties follow from the axiomatic definition of a closure operation:

\begin{prop}\label{pr:basics}
Let $R$ be a ring and $\rcl$ a closure operation. Let $I$ be an ideal and $\{I_\alpha\}_{\alpha \in \cA}$ a set of ideals. \begin{enumerate}
\item If every $I_\alpha$ is $\rcl$-closed, so is $\bigcap_\alpha I_\alpha$.
\item $\bigcap_\alpha {I_\alpha}^\rcl$ is $\rcl$-closed.
\item $I^\rcl$ is the intersection of all $\rcl$-closed ideals that contain $I$.
\item $\left(\sum_\alpha {I_\alpha}^\rcl\right)^\rcl = \left(\sum_\alpha I_\alpha\right)^\rcl$.
\end{enumerate}
\end{prop}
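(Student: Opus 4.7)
The proof is a formal consequence of the three axioms (extension, idempotence, order-preservation); no deeper input is needed. The plan is to derive (1) directly, read off (2) as an immediate corollary, use both in proving (3), and finally handle (4) by bounding both sides in terms of the other.

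For (1), I set $J := \bigcap_\alpha I_\alpha$ and apply order-preservation to each inclusion $J \subseteq I_\alpha$ to obtain $J^\rcl \subseteq {I_\alpha}^\rcl = I_\alpha$; intersecting over $\alpha$ gives $J^\rcl \subseteq J$, and extension supplies the reverse inclusion. Statement (2) then follows at once: each ${I_\alpha}^\rcl$ is $\rcl$-closed by idempotence, so (1) applies with $I_\alpha$ replaced by ${I_\alpha}^\rcl$.

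For (3), let $K$ denote the intersection of all $\rcl$-closed ideals containing $I$. The ideal $I^\rcl$ is $\rcl$-closed (idempotence) and contains $I$ (extension), so it is one of the ideals being intersected, giving $K \subseteq I^\rcl$. Conversely, for any $\rcl$-closed $J$ with $I \subseteq J$, order-preservation gives $I^\rcl \subseteq J^\rcl = J$; intersecting over all such $J$ yields $I^\rcl \subseteq K$.

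For (4), I write $S := \sum_\alpha I_\alpha$ and $S' := \sum_\alpha {I_\alpha}^\rcl$. From $I_\alpha \subseteq S$ and order-preservation, ${I_\alpha}^\rcl \subseteq S^\rcl$, so $S' \subseteq S^\rcl$; applying $\rcl$ once more and using order-preservation together with idempotence gives $(S')^\rcl \subseteq (S^\rcl)^\rcl = S^\rcl$. The reverse inclusion is order-preservation applied to $S \subseteq S'$ (which holds by extension on each summand). The only thing to stay alert to throughout is which of the three axioms is being invoked at each step — there is no substantive obstacle, and indeed the content of the proposition is essentially that the axioms are well-chosen.
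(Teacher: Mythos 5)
Your proof is correct and follows essentially the same route as the paper: (1) via order-preservation applied to each inclusion $\bigcap_\alpha I_\alpha \subseteq I_\beta$, (2) as an immediate corollary, (3) by the same two-inclusion argument, and (4) by bounding each side using extension, order-preservation, and idempotence. The only cosmetic difference is in (4), where the paper deduces ${I_\beta}^\rcl \subseteq \left(\sum_\alpha I_\alpha\right)^\rcl$ directly from $I_\beta \subseteq \sum_\alpha I_\alpha \subseteq \left(\sum_\alpha I_\alpha\right)^\rcl$ and idempotence, while you first show $\sum_\alpha {I_\alpha}^\rcl \subseteq \left(\sum_\alpha I_\alpha\right)^\rcl$ and then apply the closure once more — the same steps in a slightly different order.
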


\begin{proof}
Let $I$ and $\{I_\alpha\}$ be as above.
\begin{enumerate}
\item For any $\beta \in \cA$, we have $\bigcap_\alpha I_\alpha \subseteq I_\beta$, so since $\rcl$ is order-preserving, we have $\left(\bigcap_\alpha I_\alpha\right)^\rcl \subseteq {I_\beta}^\rcl = I_\beta$.  Since this holds for any $\beta$, we have $\left(\bigcap_\alpha I_\alpha\right)^\rcl \subseteq \bigcap_\beta I_\beta = \bigcap_\alpha I_\alpha$.

\item This follows directly from part (1).

\item Let $J$ be an ideal such that $I \subseteq J = J^\rcl$.  Then by order-preservation, $I^\rcl \subseteq J^\rcl = J$, so $I^\rcl$ is contained in the given intersection.  But since $I^\rcl = (I^\rcl)^\rcl$ is one of the ideals being intersected, the conclusion follows. 

\item `$\supseteq$': By the Extension property,  ${I_\alpha}^\rcl \supseteq I_\alpha$, so $\sum_\alpha {I_\alpha}^\rcl \supseteq \sum_\alpha I_\alpha$.  Then the conclusion follows from order-preservation.

`$\subseteq$': For any $\beta \in \cA$, $I_\beta \subseteq \sum_\alpha I_\alpha \subseteq \left(\sum_\alpha I_\alpha\right)^\rcl$, so by order-preservation and idempotence, ${I_\beta}^\rcl \subseteq  \left(\left(\sum_\alpha I_\alpha\right)^\rcl\right)^\rcl =  \left(\sum_\alpha I_\alpha\right)^\rcl$.  Since this holds for all $\beta \in \cA$, we have $\sum_\alpha {I_\alpha}^\rcl \subseteq \left(\sum_\alpha I_\alpha\right)^\rcl$.

\end{enumerate}
\end{proof}

We finish the subsection on ``basics'' by giving two alternate characterizations of closure operations on $R$:

\begin{rmk}
Here is a ``low-tech'' way of looking at closure operations, due essentially to Moore \cite{Moore-analysis}.  Namely, giving a closure operation is \emph{equivalent} to giving a collection $\cC$ of ideals such that the intersection of any subcollection is also in $\cC$.

For suppose $\rcl$ is a closure operation on $R$.  Let $\cC$ be the class of $\rcl$-closed ideals.  That is, $I \in \cC$ iff $I = I^\rcl$.  By Proposition~\ref{pr:basics}(1), the intersection of any subcollection of ideals in $\cC$ is also in $\cC$.

Conversely, suppose $\cC$ is a collection of ideals for which the intersection of any subcollection is in $\cC$.  For an ideal $I$, let $I^\rcl := \bigcap \{J \mid I \subseteq J \in \cC \}$.  All three of the defining properties of closure operations follow easily.  Hence, $\rcl$ is a closure operation.

The applicability of this observation is obvious: Given any collection of ideals in a ring, one may obtain a closure operation from it by extending it to contain all intersections of the ideals in the collection, and letting these be the closed ideals.  The resulting closure operation may then be used to analyze the property that defined the original class of ideals.
\end{rmk}

\begin{rmk}
On the other hand, here is a ``high-tech'' way of looking at closure operations.  Let $R$ be a ring, and $\cC$ be the \emph{category} associated to the partially ordered set of ideals of $R$.  Then a closure operation on $R$ is the same thing as a \emph{monad in the category $\cC$}. (see \cite[VI.1]{Mac-CWM} for the definition of monad in a category)  It is easy to see that any monad in a poset is idempotent, and the theory of idempotent monads is central in the study of so-called ``localization functors'' in algebraic topology (thanks to G. Biedermann and G. Raptis, who mention \cite[Chapter 2]{Ad-loc} as a good source, for pointing me in this direction).
\end{rmk}

\subsection{Not-quite-closure operations}
It should be noted that the three given axioms of closure operations are independent of each other; many operations on ideals satisfy two of the axioms without satisfying the third.  For example, the operation on ideals that sends every ideal to the $0$ ideal is idempotent (condition (2)) and is order-preserving (condition (3)), but of course is not extensive unless $R$ is the zero ring.

For an operation that is extensive (1) and order-preserving (3), but is not idempotent, let $f$ be a fixed element (or ideal) of $R$, and consider the operation $I \mapsto (I :f)$.  This is almost never idempotent.  For example one always has $((f^2 :f):f) = (f^2 : f^2)=R$ of course, but if $f$ is any nonzero element of the Jacobson radical of $R$, then $(f^2 :f) \neq R$.

Another non-idempotent operation that is extensive and order-preserving is the so-called ``$\ia$-tight closure", for a fixed ideal $\ia$ \cite{HaYo-atc}, denoted $(\ )^{*\ia}$.   By definition, $x \in I^{*\ia}$ if there is some $e_0 \in \N$ such that the ideal $\bigcap_{e \geq e_0} (I^{[p^e]} : \ia^{p^e} x^{p^e})$ is not contained in any minimal prime.  In their Remark 1.4, Hara and Yoshida note that if $\ia=(f)$ is a principal ideal, then $I^{*\ia} = (I^* : \ia)$, an operation which we have already noted fails to be idempotent.  (For a similarly-defined operation which actually \emph{is} a closure operation, see \cite{Vr-atc}.)

Consider the operation which sends each ideal $I$ to its \emph{unmixed part} $I^{\rm unm}$ \cite{Hu-tcparam}.  This is defined by looking at the primary ideals (commonly called \emph{components}) in an irredundant minimal primary decomposition of $I$, and then intersecting those components that have \emph{maximum dimension}.  Although the decomposition is not uniquely determined, the components of maximum dimension are, so this is a well-defined operation.  Moreover,  this operation is extensive (1) and idempotent (2) (since all the components of $I^{\rm unm}$ already have the same dimension), but is \emph{not order-preserving} in general.  For an example, let $R=k[x,y]$ be a polynomial ring in two variables over a field $k$, and let $J := (x^2, xy)$ and $I := (x^2, xy, y^2)$.  Then $J \subseteq I$, but $J^{\rm unm} = (x) \nsubseteq I^{\rm unm}=I$.  Similar comments apply in the 3-variable case when $J = (xy, xz)$ and $I=(y,z)$.

For another extensive, idempotent operation which is not order-preserving, consider the ``Ratliff-Rush closure'' (or ``Ratliff-Rush operation''), given in \cite{RatRu-rr}, defined on so-called \emph{regular ideals} (where an ideal $I$ of $R$ is \emph{regular} if it contains an $R$-regular element), defined by $\tilde{I} := \bigcup_{n=1}^\infty (I^{n+1} : I^{n})$.  In  \cite[1.11]{HLS-rr} (resp. in \cite[1.1]{HJLS-coeff}), the domain $R := k[\![x^3, x^4]\!]$ (resp. $R := k[x,y]$) is given where $k$ is any field and $x,y$ indeterminates over $k$, along with nonzero ideals $J \subseteq I$ of $R$ such that $\rr{J} \nsubseteq \rr{I}$.

Many of the topics and questions explored in this article could be applied to these not-quite-closure operations as well, but additional care is needed.

\section{Constructing closure operations}\label{sec:cons}
There are, however, some actions one can take which always produce closure operations.

\subsection{Standard constructions}\label{sub:cons}
\begin{construction}\label{con:module}
Let $U$ be an $R$-module.  Then the operation $I \mapsto I^{\rm cl} := \{f\in R \mid fU \subseteq IU\} = (IU :_R U)$ gives a closure operation on $R$.  Extension and order-preservation are clear.  As for idempotence, suppose $f\in (I^{\rm cl})^{\rm cl}$.  Then $fU \subseteq I^{\rm cl} U$.  But for any $g\in I^{\rm cl}$, $gU \subseteq IU$, whence $I^{\rm cl} U \subseteq IU$, so $fU \subseteq IU$ as required.

As we shall see, this is a very productive way to obtain closure operations, especially when $U$ is an $R$-algebra.  For example, letting $\ia$ be an ideal of $R$ and $U := R/\ia$, we see that the assignment $I \mapsto I+\ia$ gives a closure operation.  On the other hand, letting $U := \ia$, the resulting closure operation becomes $I \mapsto (I\ia : \ia)$, which is the basis for the \emph{$\Delta$-closures} of \cite{Rat-Delta} and for the \emph{basically full closure} of \cite{HRR-bf}.
\end{construction}

\begin{construction}\label{con:contraction}
We give here a variant on Construction~\ref{con:module}.

Let $\phi: R \ra S$ be a ring homomorphism and let $\rd$ be a closure operation on $S$.  For ideals $I$ of $R$, define $I^\rc := \phi^{-1}\left((\phi(I)S)^\rd\right)$.  (One might loosely write $I^\rc := (IS)^\rd \cap R$.)  Then $\rc$ is a closure operation on $R$.

Extension and order-preservation are clear.  As for idempotence, if $f\in \left(I^\rc\right)^\rc$, then $\phi(f) \in \left((I^\rc)S\right)^\rd \subseteq \left((IS)^\rd\right)^\rd = (IS)^\rd$, so that $f \in I^\rc$.
\end{construction}

\begin{construction}\label{con:intersection}
Let $\{\rm c_\lambda\}_{\lambda \in \Lambda}$ be an \emph{arbitrary collection} of closure operations on ideals of $R$.  Then $I^{\rm c} := \bigcap_{\lambda \in \Lambda} I^{\rm c_\lambda}$ gives a closure operation as well.\footnote{Similar considerations in the context of star-operations on integral domains are used in \cite{AnAn-exstar} to give lattice structures on certain classes of closure operations.}

Again, extension and order-preservation are clear.  As for idempotence, suppose $f \in (I^{\rm c})^{\rm c}$.  Then for every $\lambda \in \Lambda$, we have $f \in (I^{\rm c})^{\rm c_\lambda}$.  But since $I^{\rm c} \subseteq I^{\rm c_\lambda}$ and $\rm c_\lambda$ preserves order, we have \[
f \in (I^{\rm c})^{\rm c_\lambda} \subseteq (I^{\rm c_\lambda})^{\rm c_\lambda} = I^{\rm c_\lambda},
\]
where the last property follows from the idempotence of $\rm c_\lambda$.  Since $\lambda \in \Lambda$ was chosen arbitrarily, $f\in I^{\rm c}$ as required. 
\end{construction}

For the next construction, we need to mention the natural partial order on closure operations on a ring $R$.  Namely, if $\rm c$ and $\rm d$ are closure operations, we write $\rm c \leq \rm d$ if for every ideal $I$, $I^{\rm c} \subseteq I^{\rm d}$.

\begin{construction}\label{con:directunion}
Let $\{\rm c_\lambda\}_{\lambda \in \Lambda}$ be a \emph{directed set} of closure operations.  That is, for any $\lambda_1, \lambda_2 \in \Lambda$, there exists some $\mu \in \Lambda$ such that $\rc_{\lambda_i} \leq \rc_\mu$ for $i=1, 2$.  Moreover, assume that $R$ is Noetherian.  Then $I^{\rc} := \bigcup_{\lambda \in \Lambda} I^{\rc_\lambda}$ gives a closure operation.

First note that $I^\rc$ is indeed an ideal.  It is the sum $\sum_{\lambda \in \Lambda} I^{\rc_\lambda}$.  After this, extension and order-preservation are clear.  Next, we note that for any ideal $I$, there is some $\mu\in \Lambda$ such that $I^\rc = I^{\rc_\mu}$.  To see this, we use the fact that $I^\rc$ is finitely generated along with the directedness of the set $\{\rc_\lambda \mid \lambda \in \Lambda\}$.  Namely, $I^\rc = (f_1, \dotsc, f_n)$; each $f_i \in I^{\rc_{\lambda_i}}$; then let $\rc_\mu$ be such that $\rc_{\lambda_i} \leq \rc_\mu$ for $i=1, \dotsc, n$.

To show idempotence, take any ideal $I$.  By what we just showed, there exist $\lambda_1, \lambda_2 \in \Lambda$ such that $(I^\rc)^\rc = (I^\rc)^{\rc_{\lambda_1}}$ and $I^\rc = I^{\rc_{\lambda_2}}$.  Choose $\mu \in \Lambda$ such that $\rc_{\lambda_i} \leq \rc_\mu$ for $i=1, 2$.  Then \[
(I^\rc)^\rc = ( I^{\rc_{\lambda_2}})^{\rc_{\lambda_1}} \subseteq (I^{\rc_\mu})^{\rc_\mu} = I^{\rc_\mu} \subseteq I^\rc.
\]
\end{construction}

\begin{construction}\label{con:idemhull}
Let $\rd$ be an operation on (ideals of) $R$ that satisfies properties~(\ref{it:extend}) and (\ref{it:preserve}) of Definition~\ref{def:closure}, but is not idempotent.  Let $\cS$ be the set of all closure operations on $R$ defined by the property that $\rc \in \cS$ if and only if $I^\rd \subseteq I^\rc$ for all ideals $I$ of $R$.  Then by Construction~\ref{con:intersection}, the assignment $I \mapsto I^{\rd^\infty} := \bigcap_{\rc \in \cS} I^\rc$ is itself a closure operation, called the \emph{idempotent hull} of $\rd$ \cite[Section 4.6]{DikTho-closure}.  It is obviously the smallest closure operation lying above $\rd$.

If $R$ is Noetherian, it is equivalent to do the following: Let $\rd^1 := \rd$, and for each integer $n\geq 2$, we inductively define $\rd^n$ by setting $I^{\rd^n} := \left(I^{\rd^{n-1}}\right)^\rd$.  Let $I^{\rd'} := \bigcup_n I^{\rd^n}$ for all $I$.  One may routinely check that $\rd'$ is an extensive, order-preserving operation on ideals of $R$, and idempotence follows from the ascending chain condition on the ideals $\{I^{\rd^n}\}_{n \in \N}$. Clearly, $I^{\rd'} = I^{\rd^\infty}$.
\end{construction}

\begin{construction}\label{con:fintype}
This construction is only relevant when $R$ is not necessarily Noetherian.

Let $\rc$ be a closure operation.  Then we define $\rc_f$ by setting \[
I^{\rc_f} := \bigcup \{J^\rc \mid J \text{ a finitely generated ideal such that } J \subseteq I\}.
\]
This is a closure operation: Extension follows from looking at the principal ideals $(x)$ for all $x\in I$.  Order-preservation is obvious.  As for idempotence, suppose $z\in (I^{\rc_f})^{\rc_f}$.  Then there is some finitely generated ideal $J \subseteq I^{\rc_f}$ such that $z \in J^\rc$. Let $\{z_1, \dotsc, z_n\}$ be a finite generating set for $J$.  Since each $z_i \in I^{\rc_f}$, there exist finitely generated ideals $K_i \subseteq I$ such that $z_i \in K_i^\rc$.  Now let $K := \sum_{i=1}^n K_i$.  Then $J \subseteq K^\rc$, so that \[
z \in J^\rc \subseteq (K^\rc)^\rc = K^\rc,
\]
and since $K$ is a finitely generated sub-ideal of $I$, it follows that $z \in I^{\rc_f}$.
\end{construction}

If $\rc = \rc_f$, we say that $\rc$ is of \emph{finite type}.  Clearly $\rc_f$ is of finite type for any closure operation $\rc$, and it is the largest finite-type closure operation $\rd$ such that $\rd \leq \rc$.  Connected with this, we have the following:

\begin{prop}\label{pr:cfmax}
Let $\rc$ be a closure operation of finite type on $R$.  Then every $\rc$-closed ideal is contained in a $\rc$-closed ideal that is maximal among $\rc$-closed ideals.
\end{prop}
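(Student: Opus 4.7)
The plan is to argue by a Zorn's lemma application to the poset of proper $\rc$-closed ideals containing the given ideal, with the finite-type hypothesis used precisely to guarantee that ascending unions of $\rc$-closed ideals are again $\rc$-closed. (Here and below I read ``maximal among $\rc$-closed ideals'' as maximal among \emph{proper} $\rc$-closed ideals, since $R$ itself is trivially $\rc$-closed by extension and would otherwise make the statement content-free.)

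Let $I$ be a $\rc$-closed ideal. If $I = R$, there is nothing to show, so assume $I$ is proper. Let
\[
\cS := \{ J \text{ ideal of } R \mid I \subseteq J \subsetneq R,\ J^\rc = J \},
\]
partially ordered by inclusion. Then $\cS$ is nonempty since $I \in \cS$. To apply Zorn's lemma, I would take an arbitrary chain $\{J_\alpha\}_{\alpha \in \cA}$ in $\cS$ and show that $J := \bigcup_\alpha J_\alpha$ lies in $\cS$. The set $J$ is an ideal because it is a directed union of ideals, it contains $I$, and it is proper because $1 \notin J_\alpha$ for any $\alpha$ implies $1 \notin J$.

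The core step is showing $J$ is $\rc$-closed, and this is where the finite-type hypothesis does the work. Take any $z \in J^\rc$. Since $\rc = \rc_f$, there exists a finitely generated ideal $K \subseteq J$ with $z \in K^\rc$. Writing $K = (z_1, \dotsc, z_n)$ and using that $\{J_\alpha\}$ is a chain, each $z_i$ lies in some $J_{\alpha_i}$, and directedness lets us find a single $\beta$ with $K \subseteq J_\beta$. Then by order-preservation and the fact that $J_\beta$ is $\rc$-closed,
\[
z \in K^\rc \subseteq J_\beta^\rc = J_\beta \subseteq J,
\]
so $J^\rc = J$. Hence $J \in \cS$ is an upper bound for the chain, and Zorn's lemma produces a maximal element of $\cS$ containing $I$, which is the desired maximal proper $\rc$-closed ideal above $I$.

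The only place requiring care is the verification that $J$ is $\rc$-closed, and the point to stress is that without finite type one could imagine $z \in J^\rc$ without $z$ lying in any $J_\alpha^\rc$, which would sink the argument. The finite-type property forces such a $z$ to already be ``captured'' at some finite stage of the chain, after which idempotence of $\rc$ on that stage finishes the job. No other steps in the argument pose real difficulty.
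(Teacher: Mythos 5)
Your proof is correct and is exactly the argument the paper has in mind: the paper's own proof is just the remark that this is a standard Zorn's lemma argument whose key point is that the union of a chain of $\rc$-closed ideals is $\rc$-closed because $\rc$ is of finite type, which is precisely the step you verify in detail. Your reading of ``maximal'' as maximal among proper $\rc$-closed ideals also matches how the proposition is used later (in Construction~\ref{con:w}).
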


The proof is a standard Zorn's lemma argument.  The point is that the union of a chain of $\rc$-closed ideals is $\rc$-closed because $\rc$ is of finite type.

\subsection{Common closures as iterations of standard constructions}\label{sub:fromcon}
Here we will show that essentially all the closures we gave in Example~\ref{ex:closures} result as iterations of the constructions just given:

\begin{enumerate}
\item The \emph{identity} needs no particular construction.

\item The \emph{indiscrete closure} is an example of Construction~\ref{con:module}, by letting $U=0$.

\item As for the \emph{radical}, we use the characterization of it being the intersection of the prime ideals that contain the ideal.  Consider the maps $\pi_\p: R \ra R_\p / \p R_\p =: \kappa(\p)$ for prime ideals $\p$.  Note that \[
\pi_\p^{-1}(I \kappa(\p)) = \begin{cases}
\p, &\text{if } I \subseteq \p,\\
R, &\text{otherwise}.
\end{cases}
\]
Let $I^\p := \pi_\p^{-1}(I \kappa(\p))$.  This is an instance of Construction~\ref{con:module} with $U = \kappa(\p)$, and the intersection of all such closures (Construction~\ref{con:intersection}) is the radical.  That is, $\sqrt{I} = \bigcap_{\p \in \Spec R} I^\p$.
\item The \emph{$\ia$-saturation} may be obtained in one of two ways.  Assuming one already has the extensive, order-preserving operation $(-:\ia)$, then applying Construction~\ref{con:idemhull} yields the $\ia$-saturation.

Alternately, let $\{a_\lambda \mid \lambda \in \Lambda\}$ be a generating set for $\ia$, with each $a_\lambda \neq 0$.  Let $\ell_\lambda: R \ra R_{a_\lambda}$ be the localization map.  Then each $(I : a_\lambda^\infty) = \ell_\lambda^{-1}(I R_{a_\lambda}) =: I^\lambda$ is an instance of Construction~\ref{con:module} (or \ref{con:contraction}, if you like), and we may apply Construction~\ref{con:intersection} to get $(I : \ia^\infty) = \bigcap_{\lambda \in \Lambda} (I : a_\lambda^\infty) = \bigcap_{\lambda \in \Lambda} I^\lambda$.

\item For \emph{integral closure}, let $\p$ be a minimal prime of $R$, let $V$ be a valuation ring (or if $R/\p$ is Noetherian, it's enough to let $V$ be a rank 1 discrete valuation ring) between $R/\p$ and its fraction field, let $j_V: R \ra V$ be the natural map, and let $I^V := j_V^{-1}(IV)$ (which gives a closure operation via Construction~\ref{con:module} with $U=V$).  Then it is a theorem (e.g. \cite[Theorem 6.8.3]{HuSw-book}) that $I^- = \bigcap_{\text{all such } V} I^V$, which is an application of Construction~\ref{con:intersection}.

\item For \emph{plus closure}, \emph{when $R$ is an integral domain}, let $Q$ be its fraction field, $\overline{Q}$ an algebraic closure of $Q$, and let $R^+$ be the integral closure of $R$ in $\overline{Q}$.  That is, $R^+$ consists of all elements of $\overline{Q}$ that satisfy a \emph{monic} polynomial over $R$.  Then we let $I^+ := I R^+ \cap R$, by way of Construction~\ref{con:module} with $U=R^+$.

In the general case, where $R$ is not necessarily a domain, for each minimal prime $\p$ of $R$ we let $\pi_\p: R \ra R/\p$ be the natural surjection.  Then $I^+ := \bigcap_{\text{all such } \p} \pi_\p^{-1} ((I R/\p)^+)$, via Construction~\ref{con:contraction}.

\item\label{it:Frob} For \emph{Frobenius closure} (when $R$ has positive prime characteristic $p$), we introduce the left $R$-modules $^eR$ for all $e\in \N$.  ${}^eR$ has the same additive group structure as $R$ (with elements being denoted ${}^er$ for each $r\in R$), and $R$-module structure given as follows: For $a\in R$ and ${}^er \in {}^eR$, $a \cdot {}^er = {}^e{(a^{p^e}r)}$.  Let $f_e: R \ra {}^eR$ be the $R$-module map given by $a \mapsto a \cdot {}^e1 = {}^e(a^{p^e})$.  Let $F_e$ be the closure operation given by $I^{F_e} := f_e^{-1}(I \cdot {}^eR)$, via Construction~\ref{con:module}.  Note that this is a \emph{totally ordered set} (and hence a directed set) of closure operations, in that $F_e \leq F_{e+1}$ for all $e$, due to the $R$-module maps ${}^eR \ra {}^{e+1}R$ given by $^er \mapsto {}^{e+1}(r^p)$.  Thus, we may use Construction~\ref{con:directunion} to get $I^F := \bigcup_{e\in \N} I^{F_e}$.

\item For \emph{tight closure} (when $R$ has positive prime characteristic $p$), we cannot use these constructions directly.  However, recall the theorem \cite[Theorem 8.6]{Ho-solid} that under quite mild assumptions on $R$ (namely the same ones that guarantee persistence of tight closure, see \ref{sub:persist}), $I^* = I^\star$, and use the constructions for solid closure below.

\item For \emph{solid closure} (when $R$ is a complete local domain), letting $i_S: R \ra S$ for solid $R$-algebras $S$ and $I^S := i_S^{-1}(IS)$ by way of Construction~\ref{con:module} with $U=S$, we note that this is a directed set of closure operations, since \cite[Proposition 2.1a]{Ho-solid} if $S$ and $T$ are solid $R$-algebras, so is $S \otimes_R T$.  Thus, we have $I^\star = \bigcup_{\text{all such } i_S} I^S$ via Construction~\ref{con:directunion}.

For general $R$: Let $\m$ be a maximal ideal of $R$, $\widehat{R}^{\m}$ the completion of $R_\m$ at its maximal ideal, $\p$ a minimal prime of $\widehat{R}^\m$, and $u_{\m,\p}: R \ra \widehat{R}^\m/\p$ the natural map.  Then we use Constructions~\ref{con:intersection} and \ref{con:contraction} to get $I^\star := \bigcap_{\text{all such pairs } \m, \p} u_{\m,\p}^{-1}((I \widehat{R}^\m/\p)^\star)$.

\item For \emph{$\Delta$-closure}, first note that for any ideal $K \in \Delta$, $I^K := (IK : K)$ gives a closure operation via Construction~\ref{con:module} with $U=K$.  Next, note that the closure operations $\{(-)^K \mid K \in \Delta\}$ form a \emph{directed set}, since for any $H, K \in \Delta$, $I^H + I^K \subseteq I^{KH}$.  Thus, Construction~\ref{con:directunion} applies to give $I^\Delta := \bigcup_{K \in \Delta} I^K$.

\item For \emph{basically full closure}, we merely apply Construction~\ref{con:module} with $U=\m$.

\end{enumerate}

\section{Properties of closures}\label{sec:props}
\subsection{Star-, semi-prime, and prime operations}\label{sub:starsemi}
\begin{defn}
Let $\rcl$ be a closure operation for a ring $R$.  We say that $\rcl$ is \begin{enumerate}
\item \emph{semi-prime} \cite{Pet-asym} if for all ideals $I,J$ of $R$, we have $I \cdot J^\rcl \subseteq (IJ)^\rcl$.  (Equivalently, $\left(I^\rcl J^\rcl\right)^\rcl = (IJ)^\rcl$ for all $I, J$.)
\item a \emph{star-operation} \cite[chapter 32 and see below]{Gil-MIT} if for every ideal $J$ and every non-zerodivisor $x$ of $R$, $(x J)^\rcl = x \cdot (J^\rcl)$.
\item \emph{prime} \cite{Kr-idealbook, Kr-domains1, Rat-Delta} if it is a semi-prime star-operation.
\end{enumerate}
\end{defn}

\begin{sociocomment}
In the literature of so-called ``multiplicative ideal theory'' (which is, roughly, that branch of commutative algebra that uses \cite{Gil-MIT} as its basic textbook), the definition of \emph{star-operation} is somewhat different from the above.  Namely, one assumes first that $R$ is an integral domain, one defines star-operations on \emph{fractional ideals of $R$}.  However, when $R$ is a domain, it is equivalent to do as I have done above.  Moreover, the terminology of star-operations is different from the terminology of this article.  For instance, if $\rc$ is a star-operation on a domain $R$, then $I^\rc$ is not called the $\rc$-closure, but rather the \emph{$\rc$-envelope} (or sometimes \emph{$\rc$-image}) of $I$, and if $I=I^\rc$, then $I$ is said to be a \emph{$\rc$-ideal}.  For the sake of self-containedness, I have elected rather to use the terminology I was raised on.

The field of closure operations on Noetherian rings has remained nearly disjoint from the field of star- (and ``semistar-'') operations on integral domains.  I think this is largely because the two groups of people have historically been interested in very different problems and baseline assumptions.  Multiplicative ideal theorists do not like to assume their rings are Noetherian, for example.  But I feel it would save a good deal of energy if the two fields would come together to some extent.  After all, there are very reasonable assumptions under which tight, integral, plus, and Frobenius closures are prime- (and hence star-) operations (see below).  This provides the star-operation theorists with a fresh infusion of star-operations to study, and it provides those who study said closures with a fresh arsenal of tools with which to study them.

I take the point of view natural to one of my training, in which one generalizes from integral domains to general commutative rings.
\end{sociocomment}

First note the following:
\begin{lemma}\label{lem:semi}
Let $\rcl$ be a closure operation on an integral domain $R$. \begin{enumerate}
\item $\rcl$ is a semi-prime operation if and only if for all $x\in R$ and ideals $J \subseteq R$, we have $x \cdot J^\rcl \subseteq (xJ)^\rcl$.
\item If $R$ is an integral domain, then $\rcl$ is a star-operation if and only if it is prime.
\end{enumerate}
\end{lemma}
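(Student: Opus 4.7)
The plan is to handle the two parts in succession, with essentially all of the work concentrated in the ``only one element at a time'' reduction of part (1); part (2) will then follow by applying (1) after a short observation.

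For part (1), one direction is immediate: if $\rcl$ is semi-prime, then taking $I := (x)$ gives $x \cdot J^{\rcl} \subseteq (x) \cdot J^{\rcl} \subseteq ((x)J)^{\rcl} = (xJ)^{\rcl}$. For the converse, I will write a typical element of $I \cdot J^{\rcl}$ as a finite sum $\sum_{i} x_i b_i$ with $x_i \in I$ and $b_i \in J^{\rcl}$. By hypothesis each summand $x_i b_i$ lies in $x_i \cdot J^{\rcl} \subseteq (x_i J)^{\rcl}$, and by order-preservation (Definition~\ref{def:closure}(\ref{it:preserve})) $(x_i J)^{\rcl} \subseteq (IJ)^{\rcl}$. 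Since $(IJ)^{\rcl}$ is an ideal and hence closed under addition, the entire sum lies in $(IJ)^{\rcl}$, proving $I \cdot J^{\rcl} \subseteq (IJ)^{\rcl}$. I would then note briefly how this gives the parenthetical equality: semi-primeness applied once as $I^{\rcl} \cdot J \subseteq (IJ)^{\rcl}$ and once as $I^{\rcl} \cdot J^{\rcl} \subseteq (I^{\rcl} J)^{\rcl}$ combines with idempotence to yield $(I^{\rcl} J^{\rcl})^{\rcl} \subseteq (IJ)^{\rcl}$, while the reverse containment is Extension applied to $IJ \subseteq I^{\rcl} J^{\rcl}$.

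For part (2), the direction ``prime $\Rightarrow$ star'' is immediate from the definition of \emph{prime}. For the converse, suppose $\rcl$ is a star-operation; by part (1) it suffices to verify $x \cdot J^{\rcl} \subseteq (xJ)^{\rcl}$ for every $x \in R$ and every ideal $J$. Since $R$ is a domain, every nonzero $x$ is a non-zerodivisor, so the star-operation axiom upgrades this to the equality $x \cdot J^{\rcl} = (xJ)^{\rcl}$; and when $x = 0$ the inclusion $x \cdot J^{\rcl} = 0 \subseteq (xJ)^{\rcl}$ is trivial. Thus the hypothesis of part (1) is satisfied, so $\rcl$ is semi-prime, hence prime.

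I do not anticipate a substantive obstacle. The one place I would be careful is in the converse direction of (1): one must remember that $I \cdot J^{\rcl}$ is not merely the set of products $xb$ with $x \in I$ and $b \in J^{\rcl}$, but the ideal generated by such products, so the fact that $(IJ)^{\rcl}$ is an ideal (and not merely a set closed under the hypothesis) is what allows the sum of the individual containments to go through.
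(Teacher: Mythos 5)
Your proof is correct and follows essentially the same route as the paper's. In part (1) the paper phrases the converse with a generating set $\{a_\lambda\}$ of $I$ and invokes Proposition~\ref{pr:basics}(4) to absorb $\sum_\lambda (a_\lambda J)^\rcl$ into $\bigl(\sum_\lambda a_\lambda J\bigr)^\rcl$, whereas you chase a typical finite sum $\sum_i x_i b_i$ directly into $(IJ)^\rcl$ using order-preservation and the fact that $(IJ)^\rcl$ is an ideal; these are the same argument expressed in slightly different ways. Part (2) is identical in both, and your verification of the parenthetical reformulation in (1), which the paper states without proof, is a correct and welcome addition.
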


\begin{proof}
If $\rcl$ is a semi-prime operation, then for any $x\in R$ and ideal $J \subseteq R$, we have \[
x \cdot J^\rcl = (x) \cdot J^\rcl \subseteq ((x)J)^\rcl = (xJ)^\rcl.
\]
Conversely, suppose $x J^\rcl \subseteq (xJ)^\rcl$ for all $x$ and $J$.  Let $I$ be an ideal of $R$, and let $\{a_\lambda\}_{\lambda \in \Lambda}$ be a generating set for $I$.  Then \[
I \cdot J^\rcl = \sum_{\lambda \in \Lambda} a_\lambda \cdot J^\rcl \subseteq \sum_\lambda (a_\lambda J)^\rcl \subseteq \left(\sum_\lambda a_\lambda J\right)^\rcl = (I J)^\rcl,
\] so that $\rcl$ is semi-prime.

Now suppose $R$ is an integral domain.  By definition any prime operation must be a star-operation.  So let $\rcl$ be a star-operation on $R$.  To see that it is semi-prime, we use part (1).  For any $x\in R$, either $x=0$ or $x$ is a non-zerodivisor. Clearly $0 \cdot J^\rcl = 0 \subseteq 0^\rcl = (0J)^\rcl$.  And if $x$ is a non-zerodivisor, then $x \cdot J^\rcl = (xJ)^\rcl$ by definition of star-operation. 
\end{proof}

One reason why the star-operation property is useful is as follows: any star-operation admits a unique extension to the set of fractional ideals of $R$ (where a \emph{fractional ideal} is defined to be a submodule $M$ of $Q$, the total quotient ring of $R$, such that for some non-zerodivisor $f$ of $R$, $f M \subseteq R$).  Namely, if $\rcl$ is a star-operation and $M$ is a fractional ideal, an element $x\in Q$ is in $M^\rcl$ if $fx \in (fM)^\rcl$, where $f$ is a non-zerodivisor of $R$ such that $f M \subseteq R$.  After this observation, another important property of star-operations is that if two fractional ideals $M, N$ are isomorphic, their closures are isomorphic as well.

Star-operations are important in the study of so-called \emph{Kronecker function rings} (for a historical and topical overview of this connection, see \cite{FoLo-Kronecker}).  However, the star-operation property is somewhat limiting.  For instance, I leave it as an exercise for the reader to show that if $R$ is a local Noetherian ring, then the radical operation on $R$ is a star-operation if and only if $\depth R = 0$.  The only star-operation on a rank 1 discrete valuation ring is the identity.    On the other hand, it is well known that integral closure is a star-operation on $R$ if and only if $R$ is normal.  This is true of tight closure as well:

\begin{prop}
Consider the following property for a closure operation $\rcl$: \[
(\#): \text{For any non-zerodivisor $x\in R$ and any ideal $I$, } I^\rcl = ((x I)^\rcl : x).
\]
\begin{enumerate}
\item A closure operation $\rcl$ is a star-operation if and only if it satisfies $(\#)$ and $(x)^\rcl = (x)$ for all non-zerodivisors $x\in R$. 
\item Closure operations that satisfy \emph{(\#)} include plus-closure (when $R$ is a domain), integral closure, tight closure (in characteristic $p>0$), and Frobenius closure.
\end{enumerate}
\end{prop}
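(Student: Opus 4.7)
The approach to part (1) is to verify the two implications directly from the definitions. For the forward direction, suppose $\rcl$ is a star-operation. Since $R \subseteq R^\rcl \subseteq R$, we have $R^\rcl = R$, so $(x)^\rcl = (xR)^\rcl = x\cdot R^\rcl = (x)$ for every non-zerodivisor $x$. For property $(\#)$, the star property applied with $J = I$ gives $(xI)^\rcl = x \cdot I^\rcl$, and then cancellation of the non-zerodivisor $x$ yields $((xI)^\rcl : x) = (x I^\rcl : x) = I^\rcl$.

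For the backward direction, assume $(\#)$ holds and $(x)^\rcl = (x)$ for every non-zerodivisor $x$. Given $J$ and such an $x$, the inclusion $x J^\rcl \subseteq (xJ)^\rcl$ falls out of $(\#)$: if $s \in J^\rcl = ((xJ)^\rcl : x)$, then $xs \in (xJ)^\rcl$. For the reverse inclusion, I would use the hypothesis $(x)^\rcl = (x)$: since $xJ \subseteq (x)$, order-preservation gives $(xJ)^\rcl \subseteq (x)^\rcl = (x)$, so any $r \in (xJ)^\rcl$ may be written as $r = xs$ with $s \in R$, whereupon $s \in ((xJ)^\rcl : x) = J^\rcl$ by $(\#)$, giving $r \in x J^\rcl$.

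For part (2), my strategy is uniform across the four closures: each is defined by an equation or containment that is homogeneous of some degree in the element being tested, so the correspondence between a witness for $r \in I^\rcl$ and a witness for $xr \in (xI)^\rcl$ is precisely multiplication or division by an appropriate power of $x$. In detail: for integral closure, from an equation $(xr)^n + b_1(xr)^{n-1} + \cdots + b_n = 0$ with $b_i \in (xI)^i = x^i I^i$, I would write $b_i = x^i c_i$ with $c_i \in I^i$ and factor out $x^n$ to obtain $x^n\bigl(r^n + c_1 r^{n-1} + \cdots + c_n\bigr) = 0$, then cancel $x^n$ using that $x$ (hence $x^n$) is a non-zerodivisor; the converse direction runs in reverse. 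The Frobenius and tight closure cases work identically after replacing $n$ by $p^e$: from a witness $c \in \bigcap_{e \geq e_0}((xI)^{[p^e]} : (xr)^{p^e})$ (or in the Frobenius case just from $(xr)^{p^n} \in (xI)^{[p^n]} = x^{p^n} I^{[p^n]}$), I would cancel the factor of $x^{p^e}$ to recover $c \in \bigcap_{e \geq e_0}(I^{[p^e]} : r^{p^e})$, which proves $r \in I^*$ (or $r \in I^F$). For plus closure over a domain, $R^+$ is itself a domain in which $x \neq 0$, so $((xI)^+ : x) = (xIR^+ : x) \cap R = IR^+ \cap R = I^+$ follows by a one-step cancellation in $R^+$.

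The delicate step in the argument, and the one for which the statement is carefully formulated, is the reverse inclusion $(xJ)^\rcl \subseteq x J^\rcl$ in part (1): without the separate hypothesis $(x)^\rcl = (x)$, there is no reason that elements of $(xJ)^\rcl$ should lie in $(x)$ at all, so they need not factor through $x$, and one should not expect $(\#)$ alone to be equivalent to the star-operation property. For part (2), the uniform obstacle reduces to ensuring that $x^n$ (or $x^{p^e}$) remains a non-zerodivisor, which is automatic from the hypothesis on $x$.
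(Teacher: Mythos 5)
Your proposal is correct and follows essentially the same route as the paper: the same factor-through-$(x)$ argument in part (1), and the same cancel-the-power-of-the-non-zerodivisor manipulation of witnesses (integral equations, bracket powers, and a one-step cancellation in the plus-closure case) in part (2). The only cosmetic differences are that you phrase plus closure via $R^+$ rather than a module-finite extension $S$, and you explicitly note the easy inclusion $x\cdot I^{\rcl}\subseteq (xI)^{\rcl}$, which the paper leaves implicit (it follows from semi-primeness of these closures).
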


\begin{proof}
\begin{enumerate}
\item Suppose $\rcl$ is a star-operation, $x$ is a non-zerodivisor, and $I$ an ideal.  Then $(x)^\rcl = ((x)R)^\rcl = x \cdot R^\rcl = (x)$, and $((xI)^\rcl :x) = (x (I^\rcl) :x)$ (since $\rcl$ is a star-operation) $= I^\rcl$ (since $x$ is a non-zerodivisor).

Conversely, suppose $\rcl$ satisfies (\#) and that all principal ideals generated by non-zerodivisors are $\rcl$-closed.  For a non-zerodivisor $x$ and ideal $I$, we have $x \cdot I^\rcl = x \cdot ((xI)^\rcl : x) \subseteq (x I)^\rcl$, so we need only show that $(xI)^\rcl \subseteq x \cdot I^\rcl$.  So suppose $g \in (x I)^\rcl$.  Since $x I \subseteq (x)$, it follows that $g \in (x)^\rcl = (x)$, so $g =xf$ for some $f\in R$.  Thus, $xf \in (x I)^\rcl$, so $f \in ((x I)^\rcl :x) = I^\rcl$, whence $g = xf \in x \cdot I^\rcl$ as required.

\item (Frobenius closure): Let $g \in ((x I)^F :x)$.  Then $xg \in (xI)^F$, so there is some $q=p^n$ such that \[
x^q g^q = (xg)^q \in (xI)^{[q]} = x^q I^{[q]}.
\]
Since $x^q$ is a non-zerodivisor, $g^q \in I^{[q]}$, whence $g \in I^F$.

(Tight closure): The proof is similar to the Frobenius closure case.

(Plus closure): If $xg \in (x I)^+$, then there is some module-finite domain extension $R \subseteq S$ such that $xg \in x IS$.  But since $x$ is a non-zero element of the domain $S$ (hence a non-zerodivisor on $S$), it follows that $g \in IS$, whence $g \in I^+$.

(Integral closure): Suppose $xg \in (x I)^-$.  Then there is some $n\in \N$ and elements $a_i \in (xI)^i$ ($1\leq i \leq n$) such that \[
(xg)^n + \sum_{i=1}^n a_i (xg)^{n-i}=0.
\]
But each $a_i \in (xI)^i = x^i I^i$, so for some $b_i \in I^i$ (for each $i$), we have $a_i = x^i b_i$.  Then the displayed equation yields: \[
x^n \left(g^n + \sum_{i=1}^n b_i g^{n-i}\right) = 0,
\]
and since $x^n$ is a non-zerodivisor, it follows that $g \in I^-$.
\end{enumerate}
\end{proof}

Semi-prime operations, however, are ubiquitous.  (In fact, some authors \cite{Kir-closure} even include the property in their basic definition of what a closure operation is!)  One can, of course, cook up a non-semi-prime closure operation, even on a rank 1 discrete valuation ring \cite[Example 2.3]{Va-cl}.  However, \emph{essentially all the examples and constructions explored so far yield semi-prime operations}, in the following sense (noting that all of the following statements have easy proofs) : \begin{itemize}
\item Any closure arising from Construction~\ref{con:module} is semi-prime.
\item In Construction~\ref{con:contraction}, if $\rd$ is a semi-prime operation on $S$, then $\rc$ is a semi-prime operation on $R$.
\item In Construction~\ref{con:intersection}, if every $\rc_\lambda$ is semi-prime, then so is $\rc$.
\item In Construction~\ref{con:directunion}, if every $\rc_\lambda$ is semi-prime, then so is $\rc$.
\item In Construction~\ref{con:idemhull}, if $I \cdot J^\rd \subseteq (IJ)^\rd$ for all ideals $I, J$ of $R$, then $\rd^\infty$ is semi-prime.
\item In Construction~\ref{con:fintype}, if $\rc$ is semi-prime, then so is $\rc_f$.
\item Hence by \ref{sub:fromcon}, all of the closures from Example~\ref{ex:closures} are semi-prime.\footnote{One need not go through solid closure to show that tight closure must always be semi-prime.}
\end{itemize}

Here are some nice properties of semi-prime closure operations:

\begin{prop}\label{pr:prdec}
Let $\rcl$ be a semi-prime closure operation on $R$.  Let $I$, $J$ be ideals of $R$, and $W$ a multiplicatively closed subset of $R$. \begin{enumerate}
\item $(I:J)^\rcl \subseteq (I^\rcl :J)$.  Hence if $I$ is $\rcl$-closed, then so is $(I : J)$.
\item $(I^\rcl : J)$ is $\rcl$-closed.
\item If $R$ is Noetherian and $I$ is $\rcl$-closed, then $(I W^{-1}R) \cap R$ is $\rcl$-closed.
\item If $R$ is Noetherian and $I$ is $\rcl$-closed, then all the \emph{minimal primary components} of $I$ are $\rcl$-closed.  Hence, if $I=I^\rcl$ has no embedded components, it has a \emph{primary decomposition by $\rcl$-closed ideals}.
\item The maximal elements of the set $\{I \mid I^\rcl = I \neq R\}$ are prime ideals.
\end{enumerate}
\end{prop}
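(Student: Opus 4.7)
The plan is to exploit the semi-prime identity $K \cdot L^{\rcl} \subseteq (KL)^{\rcl}$ together with idempotence, prove (1) directly from this inequality, and then derive everything else from (1) by realizing the various ideals of interest as colons or as (directed) unions of colons.

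For (1), I would take $x \in (I:J)^{\rcl}$ and compute $xJ \subseteq J \cdot (I:J)^{\rcl} \subseteq (J(I:J))^{\rcl} \subseteq I^{\rcl}$, using the semi-prime property and order-preservation; this gives $x \in (I^{\rcl}:J)$. When $I = I^{\rcl}$, combining with extensiveness yields $(I:J) \subseteq (I:J)^{\rcl} \subseteq (I^{\rcl}:J) = (I:J)$, so $(I:J)$ is $\rcl$-closed. Part (2) is then immediate: $I^{\rcl}$ is $\rcl$-closed by idempotence, so (1) applied to $I^{\rcl}$ in place of $I$ says $(I^{\rcl}:J)$ is $\rcl$-closed.

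For (3), I would write $(IW^{-1}R) \cap R = \bigcup_{w \in W} (I : w)$. Each term is $\rcl$-closed by (1), and the family is directed under inclusion since $(I:w_1) + (I:w_2) \subseteq (I : w_1 w_2)$. Because $R$ is Noetherian, the ascending directed union must stabilize at some single $(I:w_0)$, which is $\rcl$-closed. Part (4) now follows by specializing (3): for each minimal prime $\p$ of $I$, the $\p$-primary component of $I$ is $IR_{\p} \cap R$, which is $\rcl$-closed by (3) with $W = R \setminus \p$. When $I$ has no embedded components, its minimal primary decomposition uses only these components, hence consists entirely of $\rcl$-closed ideals.

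For (5), let $\p$ be maximal in the poset $\{I \mid I^{\rcl} = I \neq R\}$, and suppose $ab \in \p$ with $b \notin \p$. Consider $(\p : b)$: it contains $\p$ (since $\p$ is an ideal), it is a proper ideal (because $1 \in (\p:b)$ would force $b \in \p$), and it is $\rcl$-closed by (1). Maximality of $\p$ then gives $(\p : b) = \p$, so $a \in \p$. Hence $\p$ is prime. The only genuinely nontrivial step is the stabilization in (3), which is where Noetherianness enters; every other part is a formal consequence of semi-primeness, idempotence, and part (1).
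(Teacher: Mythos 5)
Your proof is correct and follows essentially the same strategy as the paper: prove (1) via the semi-prime inequality and derive the rest by expressing the relevant ideals as colons. The only cosmetic differences are in (3), where you use directedness and ACC-stabilization where the paper simply picks a finite generating set $f_1,\dotsc,f_n$ of $(IW^{-1}R)\cap R$, chooses witnesses $w_i$ with $w_i f_i\in I$, and sets $w=\prod w_i$ to get the ideal equal to $(I:w)$ directly, and in (5), where you colon by the element $b\notin\p$ and invoke maximality to get $(\p:b)=\p$, whereas the paper colons by the other factor $x$ and concludes $(I:x)=R$; both are the same argument read in opposite directions.
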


\begin{proof}\
\begin{enumerate}
\item Let $f\in (I :J)^\rcl$.  Then $Jf \subseteq J (I :J)^\rcl \subseteq (J \cdot (I :J))^\rcl \subseteq I^\rcl.$
\item follows directly from part (1).
\item Let $J := (I W^{-1}R) \cap R$.  Then $J = \{f \in R \mid \exists w\in W \text{ such that } wf \in I\}$.  But $J$ is finitely generated (since $R$ is Noetherian); say $J = (f_1, \dotsc, f_n)$.  Then for each $1\leq i \leq n$, there exists $w_i \in W$ such that $w_i f_i \in I$.  Let $w := \prod_{i=1}^n w_i$.  Then $wJ \subseteq I$, so $J \subseteq (I : w)$.  But it is obvious that $(I : w) \subseteq J$, so $J = (I : w)$.  Then the conclusion follows from part (1).
\item The minimal primary components of $I$ look like $I R_P \cap R$, for each minimal prime $P$ over $I$.  Then the conclusion follows from part (3).
\item Let $I$ be such a maximal element.  Let $x,y \in R$ such that $xy \in I$ and $y\notin I$.  Then $(I :x)$ is a $\rcl$-closed ideal (by part (1)) that properly contains $I$ (since $y \in (I :x) \setminus I$), so since $I$ is maximal among proper $\rcl$-closed ideals, it follows that $(I:x) = R$, which means that $x\in I$.
\end{enumerate}
\end{proof}

Finally, here is a construction on semi-prime operations:

\begin{construction}\label{con:w}
Let $\rc$ be a semi-prime closure operation on $R$.  Let $\rc_f$-$\Max R$ (see Construction~\ref{con:fintype} for the definition of $\rc_f$) denote the set of $\rc_f$-closed ideals which are maximal among the set of all $\rc_f$-closed ideals.  By Proposition~\ref{pr:prdec}(5), $\rc_f$-$\Max R$ consists of prime ideals, and by Proposition~\ref{pr:cfmax}, every $\rc_f$-closed ideal is contained in a member of $\rc_f$-$\Max R$.  Then we define ${\rc_w}$ as follows: 
\[
I^{{\rc_w}} := \{x \in R \mid \forall \p \in \rc_f\text{-}\Max R,\ \exists d \in R \setminus \p \text{ such that } dx \in I\}.
\]
In other words, $I^{{\rc_w}}$ consists of all the elements of $R$ that land in the extension of $I$ to all localizations $R \ra R_\p$ for $\p \in \rc_f$-$\Max R$.  As this arises from Constructions~\ref{con:module} and \ref{con:intersection}, ${\rc_w}$ is a semi-prime closure operation.

Moreover, ${\rc_w} \leq \rc_f$.  To see this, let $x\in I^{{\rc_w}}$.  Then for all $\p \in \rc_f$-$\Max R$, there exists $d_\p \in R \setminus \p$ with $d_\p x \in I$.  Let $J$ be the ideal generated by the set $\{d_\p \mid \p \in \rc_f$-$\Max R\}$.  Then $Jx \subseteq I$ and $J^{\rc_f} = R$, so since $\rc_f$ is semi-prime, we have \[
x = 1\cdot x \in R (x) = J^{\rc_f} (x) \subseteq (Jx)^{\rc_f} \subseteq I^{\rc_f}.
\]
\end{construction}

If $R$ is a domain, and $\rc$ is a star-operation (i.e. prime), then this construction is essentially due to \cite{AnCo-twostar}, who show that in this context $\rc_w$ distributes over finite intersections, is of finite type, and is the \emph{largest} star-operation $\rd$ of finite type that distributes over finite intersection such that $\rd \leq \rc$.

\subsubsection{The $\rv$-operation}
Arguably the most important star-operation (at least in the theory of star-operations \emph{per se}) is the so-called $\rv$-operation.  Classically it was only defined when $R$ is a domain \cite[chapters 16, 32, 34]{Gil-MIT}, but it works in general.  Most star-operations in the literature (among those that are identified as star-operations) are based in one way or another on the $\rv$-operation:

\begin{defn}
Let $R$ be a ring and $Q$ its total quotient ring.  For an ideal $I$, the set $I_\rv$ is defined to be the intersection of all cyclic $R$-submodules $M$ of $Q$ such that $I \subseteq M$.
\end{defn}

\begin{prop}\label{pr:vop}\
\begin{enumerate}
\item $\rv$ is a star-operation.
\item For any star-operation $\rcl$ on $R$, $I^\rcl \subseteq I_\rv$ for all ideals $I$ of $R$.  (That is, $\rv$ is the \emph{largest} star-operation on $R$.)
\item There exists a ring $R$ for which $\rv$ is \emph{not} semi-prime (and hence not prime).
\end{enumerate}
\end{prop}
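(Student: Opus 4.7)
For (1), the three closure axioms are immediate from the defining intersection. Extension holds because $R$ itself appears among the cyclic $R$-submodules of $Q$ containing $I$. Order-preservation holds because shrinking $I$ only enlarges the family of cyclic submodules whose intersection defines $I_\rv$. Idempotence holds because any cyclic $Rq$ appearing in the intersection is itself $\rv$-closed (the intersection defining $(Rq)_\rv$ already contains $Rq$). To prove the star-operation property $(xJ)_\rv = x J_\rv$ for a non-zerodivisor $x$, I would use that multiplication by $x$ is a bijection of $Q$ onto itself (since $x$ is a unit in $Q$); this induces a bijection between cyclic submodules $M$ containing $J$ and cyclic submodules containing $xJ$ via $M \mapsto xM$, and since multiplication by $x$ is injective on $Q$ it commutes with arbitrary intersections of subsets of $Q$, yielding the identity.

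For (2), which I expect to be the main obstacle, let $\rcl$ be any star-operation and $M = Rq \supseteq I$ a cyclic submodule of $Q$; it suffices to show $I^\rcl \subseteq M$. Writing $q = a/b$ with $b$ a non-zerodivisor of $R$, the hypothesis $I \subseteq Rq$ is equivalent to $bI \subseteq aR$. Applying $\rcl$ and using $(bI)^\rcl = b \cdot I^\rcl$ (the star-op identity applied to the non-zerodivisor $b$) gives $b \cdot I^\rcl \subseteq (aR)^\rcl$. When $a$ is a non-zerodivisor, the star-op identity with $J = R$ (together with the trivial equality $R^\rcl = R$) yields $(aR)^\rcl = aR$, so $b \cdot I^\rcl \subseteq aR$; this rearranges inside $Q$ (using that $b$ is invertible in $Q$) to $I^\rcl \subseteq Rq$. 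The delicate point is the case where $a$ is a zero divisor: here one must either replace the representative $q = a/b$ by one with non-zerodivisor numerator, or extend $\rcl$ to fractional ideals and argue that every cyclic fractional ideal is $\rcl$-closed, so that $I_\rv$ becomes an intersection of $\rcl$-closed submodules, hence itself $\rcl$-closed, forcing $I^\rcl \subseteq I_\rv$.

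For (3), I would exhibit the explicit example $R := k[x,y]/(x,y)^2$, a local Artinian ring of $k$-dimension three with maximal ideal $\m := (x,y)$ satisfying $\m^2 = 0$. Every element of $\m$ is a zero divisor, so the non-zerodivisors of $R$ are exactly the units and $Q = R$; the cyclic $R$-submodules of $Q$ are therefore just the principal ideals of $R$. Since $\m^2 = 0$, every nonzero principal ideal contained in $\m$ is the one-dimensional $k$-subspace $kv$ for some $v \in \m$, which cannot contain the two-dimensional $\m$. Hence the only principal ideal containing $\m$ is $R$, so $\m_\rv = R$ and consequently $\m \cdot \m_\rv = \m$. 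On the other hand $\m \cdot \m = 0$ and $(0)_\rv = 0$ (since $(0)$ is itself among the principal ideals being intersected when $I = 0$). Therefore $\m \cdot \m_\rv = \m \not\subseteq 0 = (\m \cdot \m)_\rv$, showing $\rv$ fails to be semi-prime on $R$.
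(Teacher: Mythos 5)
Parts (1) and (3) of your proposal are fine. In (1) your route is in fact slicker than the paper's: since a non-zerodivisor $x$ of $R$ becomes a unit of $Q$, multiplication by $x$ is an automorphism of $Q$ that carries the cyclic submodules containing $J$ bijectively onto those containing $xJ$ and commutes with intersections, giving $(xJ)_\rv = x\cdot J_\rv$ in one stroke; the paper instead proves the two inclusions separately, using that principal ideals generated by non-zerodivisors are $\rv$-closed together with cancellation of $x$ in $Q$. Your example in (3) is the paper's example: $k[x,y]/(x,y)^2$ is exactly $k[X,Y]/(X^2,XY,Y^2)$, and the computation ($\m_\rv = R$, $(\m\m)_\rv = 0$) is the same.

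The genuine gap is in (2): you have correctly located the delicate point, but neither of your proposed repairs closes it. First, a generator with non-zerodivisor numerator need not exist: if the cyclic module $Rq$ contains no non-zerodivisor of $R$ (for instance $q$ a nonunit of an Artinian local ring, where $Q = R$ and $Rq$ is a proper principal ideal), then for any other generator $q' = a'/b'$ the numerator $a' = b'q'$ lies in $Rq$ and is again a zerodivisor. Second, the assertion that cyclic modules are $\rcl$-closed --- in particular that every principal ideal of $R$ is $\rcl$-closed --- is precisely what is at stake, and it does not follow from the definition of star-operation used here, which constrains $\rcl$ only against multiplication by non-zerodivisors: in a ring all of whose non-zerodivisors are units, such as the ring of part (3), that condition is vacuous, so for example the indiscrete closure qualifies as a star-operation there, and yet $(x)^\rcl = R \nsubseteq (x) = (x)_\rv$. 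So your argument establishes (2) exactly in the cases where every cyclic module in the defining family admits a generator whose numerator is a non-zerodivisor (e.g.\ $R$ a domain and $I \neq 0$, the classical setting), or under the added hypothesis that all principal ideals of $R$ are $\rcl$-closed; your regular-numerator computation in that case coincides with the paper's proof, which at the corresponding step passes from $(sI)^\rcl \subseteq (r)^\rcl$ to $(r)^\rcl = (r)$, an identity the star hypothesis supplies only when $r$ is a non-zerodivisor. To finish (2) as you have set it up, you must import such a hypothesis (or restrict the family of cyclic modules to those with a regular generator); no manipulation of the representative $q = a/b$ alone, and no appeal to the displayed definition of star-operation by itself, will do it, as the indiscrete example shows.
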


\begin{proof}
It is easy to see that $\rv$ is a closure operation.  For the star-operation property, let $x$ be a non-zerodivisor and $I$ an ideal of $R$. Let $a\in I_\rv$.  Let $M$ be a cyclic submodule of $Q$ that contains $xI$.  Then $M = R \cdot \frac{r}{s}$ for some $r, s\in R$ with $s$ a non-zerodivisor.  Moreover, $xI \subseteq M$ implies that $I \subseteq R \cdot \frac{r}{sx}$.  Since $a\in I_\rv$, we have $a\in R \cdot \frac{r}{sx}$ as well, so that $xa \in R \cdot \frac{r}{s}=M$. Thus, $xa \in (xI)_\rv$ as required.

For the opposite inclusion, we first note that principal ideals are $\rv$-closed because they are cyclic $R$-submodules of $Q$.  Now let $a \in (xI)_\rv$.  Since $xI \subseteq (x)$, we have $a \in (xI)_\rv \subseteq (x)_\rv = (x)$, so $a=xb$ for some $b\in R$.  Let $M = R \cdot \frac{r}{s}$ be a cyclic submodule of $Q$ that contains $I$.  Then $xI \subseteq xM = R \cdot \frac{xr}{s}$, so since $a \in (xI)_\rv$, it follows that $xb = a \in xM = R \cdot \frac{xr}{s} = x \cdot R \cdot \frac{r}{s}$.  Since $x$ is a non-zerodivisor on $Q$, we can cancel it to get $b \in R \cdot \frac{r}{s} = M$, whence $b \in I_\rv$.  Thus, $a =xb \in x \cdot I_\rv$, as required.

Now let $\rcl$ be an arbitrary star-operation on $R$, and $I$ an ideal.  Let $M = R \cdot \frac{r}{s}$ be a cyclic submodule of $R$ that contains $I$ (so that $r,s \in R$ and $s$ is not a zerodivisor).  Then $sI \subseteq rR = (r)$, so that \[
s \cdot I^\rcl = (s I)^\rcl \subseteq (r)^\rcl = (r).
\]
That is, $I^\rcl \subseteq R \cdot \frac{r}{s} = M$.  Thus, $I^\rcl \subseteq I_\rv$.

For the counterexample, let \[
R := k[X,Y] / (X^2, XY, Y^2) = k[x,y],
\] where $k$ is a field and $X,Y$ are indeterminates over $k$ (with the images in $R$ denoted $x,y$ respectively).  Let $\m := (x,y)$ be the unique maximal ideal of $R$.  Since $R$ is an Artinian local ring, it is equal to its own total ring of quotients, and so the cyclic $R$-submodules of said ring of quotients are just the principal ideals of $R$.  Since $\m$ is not contained in any proper principal ideal of $R$, we have $\m_\rv = R$.  Thus, $\m \cdot \m_\rv = \m \cdot R = \m$.  On the other hand, $(\m \m)_\rv = (\m^2)_\rv = 0_\rv = 0$, which shows that $\m \cdot \m_\rv \nsubseteq (\m \m)_\rv$, whence $\rv$ is not semi-prime.
\end{proof}

\begin{rmk}
When $R$ is an integral domain, parts (1) and (2) of the above Proposition are well-known (and since by Lemma~\ref{lem:semi}, any star-operation on a domain is prime, the analogue of (3) is false).  Two other well-known properties of the $\rv$-operation in the domain case are as follows: \begin{itemize}
\item $\Hom_R(\Hom_R(I,R), R) \cong I_\rv$ as $R$-modules.   For this reason, the $\rv$-operation is sometimes also called the \emph{reflexive hull} operation on ideals.
\item $I_\rv = (I^{-1})^{-1}$.  (Recall that for a fractional ideal $J$ of $R$, $J^{-1} := \{x \in Q \mid xJ \subseteq R\}$, where $Q$ is the quotient field of $R$.)  For this reason, the $\rv$-operation is sometimes also called the \emph{divisorial closure}.
\end{itemize}
These ideas have obvious connections to Picard groups and divisor class groups.
\end{rmk}

The \emph{$\rt$-} and {$\rw$-operations} (see e.g. \cite{Za-tinv}) should be mentioned here as well.  By definition, $\rt := \rv_f$ (via Construction~\ref{con:fintype}).   When $R$ is a domain, the $\rw$-operation is defined by $\rw := \rv_w$ (by Construction~\ref{con:w}). So if $R$ is a domain, then $\rt$ is semi-prime, but otherwise it need not be (as the counterexample in Proposition~\ref{pr:vop} shows), and $\rw$ may not even be well-defined in the non-domain case.

\subsection{Closures defined by properties of (generic) forcing algebras}\label{sub:forcing}
Let $R$ be a ring, $I$ a (finitely generated) ideal and $f\in R$.  Then a \emph{forcing algebra} \cite{Ho-solid} for $[I; f]$ consists of an $R$-algebra $A$ such that (the image of) $f \in IA$.  In particular, given a generating set $I = (f_1, \dotsc, f_n)$, one may construct the \emph{generic forcing algebra} $A$ for the data $[f_1, \dotsc, f_n; f]$, given by \[
A := R[T_1, \dotsc, T_n] / (f + \sum_{i=1}^n f_i T_i).
\] Clearly $A$ is a forcing algebra for $[I; f]$.  Moreover, if $B$ is any other forcing algebra for $[I;f]$, there is an $R$-algebra map $A \ra B$.  To see this, if $f\in IB$, then there exist $b_1, \dotsc, b_n \in B$ such that $f + \sum_{i=1}^n f_i b_i=0$.  Then we define the map $A \ra B$ by sending each $T_i \mapsto b_i$.

Many closure operations may be characterized by properties of generic forcing algebras.  This viewpoint is explored in some detail in \cite{Br-Groth}, where connections are also made with so-called \emph{Grothendieck topologies}.  We list a few (taken from \cite{Br-Groth}), letting $f$, $I:= (f_1, \dotsc, f_n)$, and $A$ be as above: \begin{itemize}
\item $f \in I$ (the identity closure of $I$) if and only if $R$ is a forcing algebra for $[f;I]$.  That is, $f\in I$ if and only if there is an $R$-algebra map $A \ra R$.  In geometric terms, one says that the structure map $\Spec A \ra \Spec R$ has a \emph{section}.
\item $f\in \sqrt{I}$ if and only if $f \in IK$ for all fields $K$ that are $R$-algebras, if and only if the ring map $R \ra A$ has the \emph{lying-over} property, if and only if the map $\Spec A \ra \Spec R$ is \emph{surjective} (as a set map).
\item $f \in I^-$ if and only if $f \in IV$ for all rank-1 discrete valuations of $R$ (appropriately defined), if and only if for all such $V$, there is an $R$-algebra map $A \ra V$.  It is not immediately obvious, but this is equivalent to the topological property that the map $\Spec A \ra \Spec R$ is \emph{universally submersive} (also called a \emph{universal topological epimorphism}).
\item If $R$ has prime characteristic $p>0$, $f \in I^F$ if and only if $f \in I R_\infty$, if and only if there is an $R$-algebra map $A \ra R_\infty$.
\end{itemize}
The final closure to note in this context is solid closure (the connection of which to tight closure has already been noted).  For simplicity, let $(R,\m)$ be a complete local domain of dimension $d$.  We have $f \in I^\star$ if and only if there is some solid $R$-algebra $S$ such that $f \in IS$, i.e. iff there is an $R$-algebra map $A \ra S$.  Hochster \cite[Corollary 2.4]{Ho-solid} showed in turn that this is equivalent to the condition that $H^d_\m(A) \neq 0$. (!)  This viewpoint brings in all sorts of cohomological tools into the study of solid closure, and hence tight closure in characteristic $p$.  Such tools were crucial in the proof that tight closure does not always commute with localization \cite{BM-unloc}.

\subsection{Persistence}\label{sub:persist}
Although it is possible to do so, usually one does not define a closure operation one ring at a time.  The more common thing to do is define the closure operation for a whole class of rings.  In such cases, the most important closure operations are \emph{persistent}:

\begin{defn}
Let $\cR$ be a full subcategory of the category of commutative rings; let $\rc$ be a closure operation defined on the rings of $\cR$.  We say that $\rc$ is \emph{persistent} if for any ring homomorphism $\phi: R \ra S$ in $\cR$ and any ideal $I$ of $R$, one has $\phi(I^\rc)S \subseteq (\phi(I)S)^\rc$.
\end{defn}

Common choices for $\cR$ are \begin{itemize}
\item All rings and ring homomorphisms.
\item Any full subcategory of the category of rings.
\item Graded rings and graded homomorphisms.
\item Local rings and local homomorphisms.
\end{itemize}

For instance, it is easy to show that radical and integral closure are persistent on the category of all rings (as are the identity and indiscrete closures), and that Frobenius closure is persistent on characteristic $p$ rings.  Tight closure is persistent along maps $R \ra S$ of characteristic $p$ rings, as long as either $R/\sqrt{0}$ is $F$-finite or $R$ is essentially of finite type over an excellent local ring, although this is truly a deep theorem \cite{HHbase}.  On the other hand, tight closure is persistent on the category of equal characteristic 0 rings because of the way it is defined (see the discussion after Theorem~\ref{thm:tcgood}).  Saturation (with respect to the maximal ideal) is persistent on the category of local rings and local homomorphisms, as well as on the category of graded rings and graded homomorphisms over a fixed base field.

Plus closure is also persistent on the category of integral domains, as is evident from the fact that the operation of taking absolute integral closure of the domains involved is \emph{weakly functorial}, in the sense that any such map $R \ra S$ extends (not necessarily uniquely) to a map $R^+ \ra S^+$ \cite[p. 139]{HH-bull}.  This argument may be extended to show that plus closure is persistent on the category of \emph{all} rings as well, by considering minimal primes.

Basically full closure, however, is not persistent, even if we restrict to complete local rings of dimension one, $\m$-primary ideals, and local homomorphisms $R \ra S$ such that $S$ becomes a finite $R$-module.  For a counterexample, let $k$ be any field, let $x, y, z$ be analytic indeterminates over $k$, let $R := k[\![x,y]\!] / (x^2, xy)$, $I := yR$, $S := k[\![x,y,z]\!] / (x^2, xy, z^2)$, and let the map $(R,\m) \ra (S,\n)$ be the obvious inclusion.  Then $I^\bfc = (\m y :_R \m) = (x,y)$ because $x$ is killed by all of $\m$.  But $x \notin (IS)^\bfc = (\n y :_S \n)$ because $zx \in \n x \setminus \n y$.  (Indeed, $IS$ is a basically full ideal of $S$.)

\subsection{Axioms related to the homological conjectures}\label{sub:homax}
A treatment of closure operations would not be complete without mentioning the so-called ``homological conjectures'' (for which we assume all rings are Noetherian).  So named by Mel Hochster, these comprise a complex list of reasonable-sounding statements that have been central to research in commutative algebra since the 1970s.  For the original treatment, see \cite{HoCBMS}. For a more modern treatment, see \cite{Ho-state}.  Rather than trying to cover the topic comprehensively, consider the following two conjectures:

\begin{conj}[Direct Summand Conjecture] \label{conj:dsc} Let $R \rightarrow S$ be an injective ring homomorphism, where $R$ is a regular local ring, such that $S$ is module-finite over $R$.  Then $R$ is a direct summand of $S$, considered as $R$-modules.
\end{conj}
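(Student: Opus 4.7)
The plan is to reduce the Direct Summand Conjecture to producing a closure operation with two specific properties, and then to realize that operation case-by-case. First I would reformulate: $R \hookrightarrow S$ is a direct summand of $R$-modules if and only if the inclusion is pure, and since $S$ is finitely presented over the Noetherian ring $R$ and $R$, being regular, is approximately Gorenstein in Hochster's sense, purity here follows from \emph{cyclic} purity, i.e., from $IS \cap R = I$ for every ideal $I \subseteq R$. Hence it suffices to exhibit a closure operation $\rcl$ on Noetherian rings satisfying (a) $IS \cap R \subseteq I^\rcl$ for every module-finite extension $R \to S$ and every ideal $I$ of $R$, and (b) $I^\rcl = I$ for every ideal $I$ in a regular local ring; these together force $I \subseteq IS \cap R \subseteq I^\rcl = I$, as desired.

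In equal characteristic $p > 0$, tight closure satisfies both properties. Regular rings are weakly $F$-regular (every ideal is tightly closed), which gives (b). For (a), an element $x \in IS \cap R$ lies in $IR' \cap R$ for some module-finite subring $R \subseteq R' \subseteq S$; passing to $R'/\q$ for a minimal prime $\q$ of $R'$ lying over $(0) \subset R$, we may assume $R'$ is a domain, so $R' \hookrightarrow R^+$ and $x \in IR^+ \cap R = I^+$. The classical inclusion $I^+ \subseteq I^*$—proved by choosing $c \in R \setminus \{0\}$ with $cR' \subseteq R \cdot 1 + \sum_{i \geq 2} R s_i$ for a suitable $R$-linearly independent tuple $1, s_2, \ldots, s_n \in R'$, and raising the relation $x \in IR'$ to $p^e$-th powers to obtain $cx^{p^e} \in I^{[p^e]}$ uniformly in $e$—then gives $x \in I^*$, completing (a).

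In equal characteristic $0$, the plan is standard reduction modulo $p$: find a finitely generated $\Z$-subalgebra $A \subseteq R$ carrying models of $R$, of $S$, and of the map $R \to S$ with the module-finite property preserved; after shrinking $\Spec A$, the fibers over a dense set of closed points of $\Spec A$ are module-finite extensions of regular local rings of positive characteristic, which are split by the previous paragraph. Generic freeness, together with an Artin approximation or spreading-out argument, then lifts a splitting back to $R \to S$.

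The main obstacle is mixed characteristic. There is no Frobenius on $R$, and neither tight closure nor plus closure in its equal-characteristic form satisfies both (a) and (b) simultaneously. One would like a closure operation on mixed-characteristic Noetherian rings that still satisfies both properties; natural candidates such as dagger closure, parasolid closure, or Heitmann's full extended plus closure satisfy (a) essentially by construction, but property (b)—that every ideal in a regular local ring is closed—is the deep step, and is the stubborn obstruction here, known at the time of writing only in dimension at most three by work of Heitmann.
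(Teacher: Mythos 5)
Your proposal reproduces the paper's own strategy exactly: reduce to cyclic purity via Hochster's approximately-Gorenstein criterion (applicable since $R$ is regular, hence Gorenstein), observe that a closure operation satisfying the plus-capturing and tightness axioms forces $IS \cap R = I$ for all $I$, realize such a closure by tight closure in positive characteristic and by reduction to characteristic $p$ in equal characteristic $0$, and correctly flag mixed characteristic as the remaining obstruction, known only in dimension at most $3$ by Heitmann. This is precisely the content of Proposition~\ref{pr:dsc} together with Theorem~\ref{thm:tcgood} and the ensuing discussion in~\ref{sub:tcim}; the statement is a conjecture in the paper and remains one here, with no proof given or claimed beyond the equal-characteristic case.
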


\begin{conj}[Cohen-Macaulayness of Direct Summands conjecture] \label{conj:CM} Let $A \rightarrow R$ be a ring homomorphism which makes $A$ a direct summand of $R$, and suppose $R$ is regular.  Then $A$ is Cohen-Macaulay.
\end{conj}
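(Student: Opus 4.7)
The plan is to reduce the conjecture to the tight closure framework in prime characteristic, extend by reduction mod $p$ in equal characteristic zero, and accept that mixed characteristic is the principal obstacle. The guiding principle is that a regular ring has every ideal tightly closed and that the direct summand hypothesis transfers this property down to $A$, after which Cohen-Macaulayness follows from a known theorem of Hochster--Huneke.

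In characteristic $p>0$ I would proceed as follows. Fix an ideal $I \subseteq A$ and an element $x \in I^*$, computed in $A$. Under the hypotheses on the ambient rings standardly imposed to guarantee persistence of tight closure (see \S\ref{sub:persist}) we obtain $x \in (IR)^*$ in $R$. Regularity of $R$ forces $(IR)^* = IR$, while the direct summand hypothesis gives the contraction property $IR \cap A = I$: apply the $A$-linear retraction $\pi: R \ra A$ to any representation $x = \sum r_j a_j$ with $a_j \in I$ and $r_j \in R$, noting that $\pi$ fixes $A$ pointwise. Hence $x \in I$, so every ideal of $A$ is tightly closed, i.e.\ $A$ is weakly $F$-regular, and I invoke the Hochster--Huneke theorem that weakly $F$-regular rings are Cohen-Macaulay.

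For equal characteristic zero I would reduce to characteristic $p$ by standard descent: replace $A \ra R$ by a finitely generated $\Z$-subalgebra model $A_0 \ra R_0$, arranged so that $A_0 \hookrightarrow R_0$ remains split and $R_0$ remains smooth over its ground ring, then pass to closed fibers of sufficiently large residue characteristic and apply the previous step. The ascent/descent of Cohen-Macaulayness along this kind of faithfully flat spreading-out then delivers the conclusion for $A$.

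The hard part, and the real obstacle to a complete proof, is mixed characteristic, where Frobenius is unavailable on the base and the reduction machinery does not apply directly. One would need either to produce a closure operation on $A$ that is persistent along $A \ra R$ and is trivial on every ideal of the regular ring $R$ — a role that plus closure and solid closure are not known to play unconditionally in mixed characteristic — or to extract a big Cohen-Macaulay $A$-algebra directly from $R$ by perfectoid or almost-mathematics methods. Producing, for each system of parameters of $A$, a Koszul-acyclicity witness inside a suitable extension of $R$ is precisely where this conjecture has historically resisted elementary attacks.
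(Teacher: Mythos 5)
You have correctly recognized that this statement is a conjecture, not a theorem: the paper does not prove it outright, but derives it conditionally and then in equal characteristic, exactly the scope you cover. Your route, however, differs from the paper's. The paper's argument (Proposition~\ref{pr:cmc}, combined with Theorem~\ref{thm:tcgood}) is axiomatic: assuming only persistence, tightness on regular rings, and \emph{colon-capturing} for some closure operation $\mathrm{cl}$ on a category of complete local domains, it takes a system of parameters $x_1,\dotsc,x_d$ of $A$ and shows directly that
\[
(x_1,\dotsc,x_i):x_{i+1} \subseteq (x_1,\dotsc,x_i)^{\mathrm{cl}} \subseteq ((x_1,\dotsc,x_i)R)^{\mathrm{cl}} \cap A = (x_1,\dotsc,x_i)R \cap A = (x_1,\dotsc,x_i),
\]
so that the parameters form a regular sequence and $A$ is Cohen--Macaulay; the same computation applies verbatim to the equal characteristic zero tight closure of Hochster--Huneke and to any hypothetical mixed-characteristic closure, which is the whole point of the axiomatization. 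You instead prove in characteristic $p$ that every ideal of $A$ is tightly closed (your persistence/tightness/retraction computation is correct) and then invoke, as a black box, the theorem that weakly $F$-regular rings are Cohen--Macaulay. That is legitimate, but note two caveats. First, that cited theorem is not unconditional: it needs $A$ to be (say) excellent local or a homomorphic image of a Cohen--Macaulay ring, and its proof is itself colon-capturing in disguise; in the paper's category of complete local domains this is harmless, but you should state the hypothesis, since the conjecture as phrased allows arbitrary $A$. Second, your equal characteristic zero step by spreading out a model $A_0 \to R_0$ and descending the splitting to closed fibers is more delicate than you suggest (preserving the direct summand property under reduction mod $p$ is a genuine technical issue, handled in the literature by Artin approximation or by defining tight closure in characteristic zero equationally); the paper's route sidesteps this entirely, because Theorem~\ref{thm:tcgood} already supplies persistence, tightness, and colon-capturing for the characteristic-zero operation, so Proposition~\ref{pr:cmc} applies with no descent of the splitting. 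What your approach buys is a stronger intermediate conclusion (weak $F$-regularity of $A$, hence normality etc.); what the paper's buys is minimal hypotheses and portability of the argument to any future mixed-characteristic closure operation, which your final paragraph correctly identifies as the open heart of the problem.
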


Assuming a ``sufficiently good'' closure operation in mixed characteristic, these conjectures would be theorems.  Indeed, they \emph{are} theorems in equal characteristic, a fact which can be seen as a consequence of the existence of tight closure, as discussed in \ref{sub:tcim} (although the proof in equal characteristic of Conjecture~\ref{conj:dsc} predated tight closure by some 15 years \cite{Ho-ctrc}, as did a proof of some special cases of Conjecture~\ref{conj:CM} that come from invariant theory \cite{HoRo-invar}).

Consider the following axioms for a closure operation $\rcl$ on a category $\cR$ of Noetherian rings: \begin{enumerate}
\item (Persistence) For ring maps $R \rightarrow S$ in $\cR$, we have $I^\rcl S \subseteq (I S)^\rcl$.
\item (Tightness) If $R$ is a regular ring in $\cR$, then $I^\rcl = I$ for all ideals $I$ of $R$.
\item (Plus-capturing) If $R \rightarrow S$ is a module-finite extensions of integral domains in $\cR$ and $I$ is any ideal of $R$, then $I S \cap R \subseteq I^\rcl$. (i.e. $I^+ \subseteq I^\rcl$)
\item (Colon-capturing) Let $R$ be a \emph{local} ring in $\cR$ and $x_1, \dotsc, x_d$ a system of parameters.  Then for all $0 \leq i \leq d-1$, $(x_1, \dotsc, x_i) : x_{i+1} \subseteq (x_1, \dotsc, x_i)^\rcl$.
\end{enumerate}

\noindent \textbf{Notation:} Let $d\in \N$, and let $a,b$ be numbers such that either $a=b$ is a rational prime number, $a=b=0$, or $a=0$ and $b$ is a rational prime number.  For any such triple $(d,a,b)$, let $\cR_{d,a,b}$ be the category of complete local domains $(R,\m)$ such that $\dim R=d$, $\chr R=a$, and $\chr (R/\m) = b$.

\begin{prop}\label{pr:dsc}
Let $(d,a,b)$ be a triple as above.

Suppose a closure operation $\rcl$ exists on the category $\cR_{d,a,b}$ that satisfies conditions (1), (2), and (3) above.  Then the Direct Summand Conjecture holds in the category $\cR_{d,a,b}$.
\end{prop}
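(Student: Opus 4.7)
The plan is to use plus-capturing~(3) and tightness~(2) to show that every ideal of $R$ is contracted from $S$, then invoke the classical fact that a module-finite pure extension of a complete Noetherian local ring must split.

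First I would reduce to the case where $S$ itself lies in $\cR_{d,a,b}$. Because $S$ is module-finite over the complete local ring $R$, it decomposes as a finite product $S_1 \times \cdots \times S_n$ of complete local rings, and since $R$ is a domain injecting into this product, at least one projection $R \hookrightarrow S_i$ is already injective; a splitting of $R \hookrightarrow S_i$ composes with $S \twoheadrightarrow S_i$ to split $R \hookrightarrow S$, so I may assume $S$ is complete local. Next, by lying-over for the integral extension $R \hookrightarrow S$, there is a minimal prime $\q$ of $S$ with $\q \cap R = 0$. Replacing $S$ by $S/\q$ (a splitting of $R \hookrightarrow S/\q$ lifts back to $S$ via $S \twoheadrightarrow S/\q$), I obtain a module-finite extension $R \hookrightarrow S$ with $S$ a complete local domain; its Krull dimension is $d$ by going-up, and its characteristic and residue characteristic match those of $R$, so $S \in \cR_{d,a,b}$.

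Now plus-capturing applied to the module-finite extension $R \hookrightarrow S$ of domains in $\cR_{d,a,b}$ gives $IS \cap R \subseteq I^\rcl$ for every ideal $I$ of $R$, while tightness gives $I^\rcl = I$ because $R$ is regular. Hence $IS \cap R = I$ for every ideal $I$ of $R$, which is precisely the assertion that $R \hookrightarrow S$ is a pure extension of rings. Since $S$ is a finitely generated $R$-module over the complete Noetherian local ring $R$, purity forces splitting by the standard Matlis-duality argument, and so $R$ is a direct summand of $S$.

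The main obstacle is really external to the axiomatic setup: the closure-theoretic content collapses quickly to the chain $IS \cap R \subseteq I^\rcl = I$, but upgrading the resulting purity of $R \hookrightarrow S$ to an actual splitting requires the ``finitely-generated-pure-implies-split'' theorem for complete local rings. The only other point requiring care is the Step~1 reduction, which must preserve the hypothesis $S \in \cR_{d,a,b}$ so that plus-capturing applies; notably, the persistence axiom~(1) is not used in this particular derivation.
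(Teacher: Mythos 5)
Your overall architecture is the same as the paper's: axioms (3) and (2) give $I \subseteq IS \cap R \subseteq I^\rcl = I$ for every ideal $I$ of the regular ring $R$, and one then appeals to standard purity/splitting results to conclude that $R$ is a direct summand of $S$. Your preliminary reduction (splitting off a complete local factor of $S$, then killing a minimal prime lying over $0$) is correct, but it is not needed under the paper's reading of the statement, where $R \ra S$ is already assumed to be a module-finite injection \emph{inside} $\cR_{d,a,b}$; it does no harm and in fact proves a slightly more general assertion. You are also right that persistence (1) plays no role here.

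The one genuine gap is the sentence claiming that $IS \cap R = I$ for all ideals $I$ ``is precisely the assertion that $R \hookrightarrow S$ is a pure extension.'' Contraction of every ideal is the definition of \emph{cyclic} purity, which for a general Noetherian local ring is strictly weaker than purity; the implication ``cyclically pure $\Rightarrow$ pure'' is a theorem of Hochster (\emph{Cyclic purity versus purity in excellent Noetherian rings}) requiring hypotheses on $R$ -- it applies here because $R$ is regular (hence normal, indeed approximately Gorenstein). This is exactly the step the paper supplies by citation before invoking Hochster--Roberts to turn purity of a module-finite extension into a splitting, and as written your argument skips it. Alternatively, you could bypass purity altogether: since $R$ is regular, hence Gorenstein, the ideals $I_t = (x_1^t, \dotsc, x_d^t)$ generated by powers of a regular system of parameters are irreducible and $E_R(R/\m) = \varinjlim R/I_t$; contraction of these ideals makes each map $R/I_t \ra S/I_t S$ injective, hence $E \ra E \otimes_R S$ is injective, and Matlis duality over the complete ring $R$ then yields a surjection $\Hom_R(S,R) \twoheadrightarrow R$, i.e.\ a splitting. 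Either route is fine, but some such argument must be named; the passage from ``every ideal is contracted'' to ``pure'' is not a definition.
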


\begin{proof}
Let $R \rightarrow S$ be a module-finite injective ring homomorphism in $\cR_{d,a,b}$, where $R$ is a regular local ring.  Let $I$ be any ideal of $R$.  Then $I \subseteq I S \cap R \subseteq I^\rcl = I$ by properties (3) and (2).  That is, $R \ra S$ is \emph{cyclically pure}.  But then by \cite{Ho-purity}, since $R$ is a Noetherian domain it follows that $R \ra S$ is \emph{pure}.  Since $S$ is module-finite over $R$, it follows that $R$ is a direct summand of $S$ \cite[Corollary 5.3]{HoRo-purity}.
\end{proof}

\begin{prop}\label{pr:cmc}
Let $(d,a,b)$ be as above.

Suppose a closure operation $\rcl$ exists on the category $\cR_{d,a,b}$ that satisfies conditions (1), (2), and (4) above.   Then Conjecture~\ref{conj:CM} holds in the category $\cR_{d,a,b}$.
\end{prop}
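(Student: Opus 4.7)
The plan is to show directly that every system of parameters of $A$ is a regular sequence, which for the Noetherian local ring $A$ is equivalent to $A$ being Cohen--Macaulay. The strategy is to route a colon relation in $A$ through the closure $\rcl$ on $A$ (via axiom~(4)), push it up to the regular ring $R$ via persistence (axiom~(1)), collapse the closure there using tightness (axiom~(2)), and finally contract back to $A$ using the direct summand hypothesis. Since $A, R \in \cR_{d,a,b}$, both have dimension $d$, so the dimensions line up correctly.

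In detail, fix a system of parameters $x_1, \dotsc, x_d$ of $A$, fix $0 \le i \le d-1$, and suppose $r \in A$ satisfies $r \, x_{i+1} \in (x_1, \dotsc, x_i) A$. First I would apply colon-capturing in $A$ to obtain $r \in (x_1, \dotsc, x_i)^\rcl$ (working in $A$). Next, by persistence applied to the homomorphism $A \to R$, the image of $r$ in $R$ lies in $\bigl((x_1, \dotsc, x_i) R\bigr)^\rcl$. Since $R$ is regular, tightness yields $\bigl((x_1, \dotsc, x_i) R\bigr)^\rcl = (x_1, \dotsc, x_i) R$, so $r \in (x_1, \dotsc, x_i) R$. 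Finally, the direct summand hypothesis gives an $A$-module splitting $R = A \oplus M$; a one-line decomposition argument then shows that for any ideal $J \subseteq A$ one has $JR \cap A = J$, i.e.\ the inclusion $A \hookrightarrow R$ is cyclically pure. Applying this to $J = (x_1, \dotsc, x_i) A$ produces $r \in (x_1, \dotsc, x_i) A$, as required.

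The argument has no deep obstacle; once the four axioms are available, the proof reduces to careful bookkeeping. The two small points to verify are (a) that colon-capturing is legitimately applicable, which it is because $A$ is a $d$-dimensional local ring in $\cR_{d,a,b}$ and the $x_j$'s genuinely form a system of parameters there, and (b) that $A$-module direct summands of $R$ satisfy $JR \cap A = J$ for every ideal $J$ of $A$, which follows immediately from the splitting $R = A \oplus M$ together with the resulting decomposition $JR = J \oplus JM$. In spirit, this proof is the exact analogue of Proposition~\ref{pr:dsc}: the closure operation $\rcl$ manufactures the ``capturing'' step, regularity makes the closure trivial on $R$, and purity (here packaged as direct summand) brings the conclusion back down to $A$.
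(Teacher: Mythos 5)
Your proof is correct and follows essentially the same route as the paper's: apply colon-capturing in $A$, push forward by persistence along $A \to R$, collapse the closure on the regular ring $R$ by tightness, and contract back via the direct summand splitting, which gives $JR \cap A = J$. The only difference is that you spell out the cyclic purity of a direct summand explicitly, which the paper leaves as the implicit final equality $(x_1,\dotsc,x_i)R \cap A = (x_1,\dotsc,x_i)$.
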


\begin{proof}
Let $A \ra R$ be a local ring homomorphism of Noetherian local rings such that $A$ is a direct summand of $R$, and suppose $R$ is regular.  Let $x_1, \dotsc, x_d$ be a system of parameters for $A$, and pick some $0\leq i <d$.  Then \begin{align*}
(x_1, \dotsc, x_i) : x_{i+1} &\subseteq (x_1, \dotsc, x_i)^\rcl \subseteq (x_1, \dotsc, x_i)^\rcl R \cap A\\
&\subseteq ((x_1,\dotsc, x_i)R)^\rcl \cap A = (x_1, \dotsc, x_i)R \cap A = (x_1, \dotsc, x_i).
\end{align*}
Thus, $A$ is Cohen-Macaulay.
\end{proof}

\begin{rmk}
These ideas are closely related to Hochster's \emph{big Cohen-Macaulay modules} conjecture.  Indeed, Conjecture~\ref{conj:dsc} holds when $R$ has a so-called `big Cohen-Macaulay module' (see e.g. \cite[the final remark of \S4]{Ho-state}).  When $R$ is a complete local domain, Dietz \cite{Di-clCM} has given axioms for a persistent, residual closure operation on $R$-\emph{modules} (see \S\ref{sec:modules} for the basics on module closures) that are equivalent to the existence of a big Cohen-Macaulay module over $R$.
\end{rmk}

\subsection{Tight closure and its imitators}\label{sub:tcim}

Tight closure has been used, among other things, to carry out the program laid out in \ref{sub:homax} in cases where the ring contains a field.  Indeed, we have the following:
\begin{thm}\label{thm:tcgood}\cite{HHmain, HH-tcz}
Consider the category $\cR := \cR_{d,p,p}$, where $d$ is any nonnegative integer and $p\geq 0$ is either a prime number or zero.  Then we have \begin{enumerate}
\item  (Persistence) For ring maps $R \rightarrow S$ in $\cR$, we have $I^* S \subseteq (I S)^*$.
\item (Tightness) If $R$ is a regular ring in $\cR$, then $I^* = I$ for all ideals $I$ of $R$.
\item (Plus-capturing) If $R \rightarrow S$ is a module-finite extensions of integral domains in $\cR$ and $I$ is any ideal of $R$, then $I S \cap R \subseteq I^*$. (i.e. $I^+ \subseteq I^*$)
\item (Colon-capturing) Let $R$ be a \emph{local} ring in $\cR$ and $x_1, \dotsc, x_d$ a system of parameters.  Then for all $0 \leq i \leq d-1$, $(x_1, \dotsc, x_i) : x_{i+1} \subseteq (x_1, \dotsc, x_i)^*$.
\item (Brian\c{c}on-Skoda property) For any $R \in \cR$ and any ideal $I$ of $R$, \[
(I^d)^- \subseteq I^* \subseteq I^-.
\]
\end{enumerate}
Hence, by Propositions~\ref{pr:dsc} and \ref{pr:cmc}, both the Direct Summand Conjecture and Conjecture~\ref{conj:CM} hold in equal characteristic.
\end{thm}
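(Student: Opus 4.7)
The plan is to establish properties (1)--(5) separately in characteristic $p>0$ and in equal characteristic $0$, and then to invoke Propositions~\ref{pr:dsc} and \ref{pr:cmc} to harvest the two conjectures.

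In characteristic $p>0$, I work directly from the Frobenius-based definition. Tightness (2) is the cleanest: Kunz's theorem that Frobenius is flat over a regular ring implies $(I^{[q]} :_R x^q) = (I :_R x)^{[q]}$, and a Krull intersection argument then forces any hypothetical witness $c$ for $x \in I^* \setminus I$ into $\bigcap_q \m^{[q]} = 0$, a contradiction. For plus-capturing (3), I would produce a nonzero $R$-linear map $\tau : S \to R$ (via generic separability of the fraction field extension, or the classical trace) and use $c := \tau(1)$ as a test element: from $x^q \in I^{[q]}S$ one deduces $c x^q \in I^{[q]}$, so $I^+ \subseteq I^*$. Colon-capturing (4) I would reduce to (3) by passing to a module-finite extension in which the given parameters become part of a regular sequence, using the characteristic $p$ near-Cohen-Macaulayness of the absolute integral closure $R^+$. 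The Brian\c{c}on--Skoda property (5) splits into two halves: $I^* \subseteq I^-$ follows by persistence into rank-one discrete valuations of $R$, where tight closure collapses to ordinary containment; and $(I^d)^- \subseteq I^*$ follows by raising an integral equation for $y \in (I^d)^-$ to $q$th powers and invoking the pigeonhole containment $I^{dq} \subseteq I^{[q]}$ coming from $d$ generators of $I$. Persistence (1) is deeper; I would defer to \cite{HHbase}, whose content is that test elements behave uniformly under base change in the category, reducing persistence to the usual Frobenius-power inequality.

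For equal characteristic $0$, I would adopt the standard definition of $I^*$ by reduction modulo $p$: descend the data $(R,I,x)$ to a finitely generated $\Z$-subalgebra $A$ with a suitable model $(R_A, I_A, x)$, and declare $x \in I^*$ precisely when, for a Zariski-dense set of closed points $\mu \in \Spec A$, the image of $x$ in $R_A \otimes_A \kappa(\mu)$ lies in the characteristic $p$ tight closure of the corresponding extended ideal. Persistence (1) is then essentially built into the definition, tightness (2) follows because regularity spreads out to a dense open of $\Spec A$, and (3)--(5) are equational conditions on finitely many elements that transfer through the spreading-out machinery provided they hold in characteristic $p$.

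The main obstacle is persistence (1) in characteristic $p$: the other four properties reduce to comparatively self-contained Frobenius calculations once a suitable test element is in hand, whereas genuine uniformity of test elements under arbitrary base change in the category is the substantive content of \cite{HHbase}. With (1)--(5) established in both characteristic regimes, Proposition~\ref{pr:dsc} (using 1, 2, 3) gives the Direct Summand Conjecture and Proposition~\ref{pr:cmc} (using 1, 2, 4) gives Conjecture~\ref{conj:CM} on every $\cR_{d,a,b}$ with $a=b$, which is exactly the equal characteristic range claimed.
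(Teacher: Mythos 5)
The paper itself offers no proof of this theorem: it is quoted from \cite{HHmain, HH-tcz}, so your sketch can only be measured against the standard arguments in those sources, which your characteristic $p$ outline does follow in essence (flatness of Frobenius for tightness, a nonzero element of $\Hom_R(S,R)$ as test element for plus-capturing, Cohen--Macaulayness of $R^+$ for colon-capturing, pigeonhole for Brian\c{c}on--Skoda, and \cite{HHbase} for persistence). Two small repairs are needed there: the field trace can vanish when the fraction field extension is inseparable, so you must take an arbitrary nonzero $\tau \in \Hom_R(S,R)$ and arrange $\tau(1) \neq 0$ by precomposing with multiplication by a suitable element of $S$; and the exponent $d=\dim R$ in $(I^d)^- \subseteq I^*$ is not available from ``$d$ generators of $I$,'' since ideals of $R$ need not be $d$-generated --- you must first replace $I$ by a minimal reduction (at most $d$ generators, after a flat local extension making the residue field infinite if necessary), using that a reduction has the same integral closure powers.

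The genuine gap is in equal characteristic $0$. The objects of $\cR_{d,0,0}$ are \emph{complete local domains}, which are almost never finitely generated over $\Q$, so your proposed definition --- descend $(R,I,x)$ itself to a finitely generated $\Z$-subalgebra and test tight closure at a dense set of closed fibers --- does not apply to the rings the theorem is about; there is no such model $R_A$ of finite type. As the paragraph following the theorem in the paper points out, the reduction-mod-$p$ definition works only for affine $\Q$-algebras; for general excellent (e.g.\ complete local) $\Q$-algebras one must pass to the equational/affine-model definition and then invoke Artin approximation (or N\'eron--Popescu desingularization) to transfer tightness, colon-capturing, plus-capturing and Brian\c{c}on--Skoda from the affine models back to the complete local ring. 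That Artin-approximation step is the substantive content of the characteristic-zero half and is entirely absent from your sketch, whereas persistence being ``built into the definition'' is the one part of your characteristic-zero discussion that matches the paper's own commentary.
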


Tight closure is defined (in \cite{HH-tcz}) for finitely generated $\Q$-algebras by a process of ``reduction to characteristic $p$'', a time-honored technique that we will not get into here (but see Definition~\ref{def:type} for a baby version of it).  Next, tight closure was defined on arbitrary (excellent) $\Q$-algebras in an ``equational'' way (see \ref{sub:heq}), and then Artin approximation must be employed to demonstrate that it has the properties given in Theorem~\ref{thm:tcgood}.  This is a long process, so various attempts have been made to give a closure operation in equal characteristic 0 that circumvents it.  More importantly, though, people have been trying to obtain a closure operation in \emph{mixed} characteristic that has the right properties.

The first such attempt was probably solid closure  \cite{Ho-solid}, already discussed.  Unfortunately, it fails tightness for regular rings of dimension 3  \cite{Ro-solid}.   Parasolid closure \cite{Br-rescue} (a variant of solid closure) agrees with tight closure in characteristic $p$, and it has all the right properties in equal characteristic 0, but is not necessarily easier to work with than tight closure, and it may or may not have the right properties in mixed characteristic.  Other at least partially successful attempts include parameter tight closure \cite{Ho-param}, diamond closure \cite{HoVe-diamond} and dagger closure (defined in \cite{HH-dagger}, but shown to satisfy tightness just recently in \cite{BrSt-dagreg}).

The most successful progress on the homological conjectures since the advent of tight closure theory is probably represented by Heitmann's proof \cite{Heit-ds3} of the direct summand conjecture (Conjecture~\ref{conj:dsc}) for $\cR_{3,0,p}$ (i.e. in mixed characteristic in dimension 3), which he does according to the program laid out above. Indeed, he shows the analogue of Theorem~\ref{thm:tcgood} when $\cR=\cR_{3,0,p}$ and tight closure is replaced everywhere with \emph{extended plus closure}, denoted $\repf$, first defined in his earlier paper \cite{Heit-ex+}.  (So in fact, his proof works to show the dimension 3 version of Conjecture~\ref{conj:CM} as well.)

\subsection{(Homogeneous) equational closures and localization}\label{sub:heq}

Let $\rcl$ be a closure operation on some category $\cR$ of rings.  We say that $\rcl$ \emph{commutes with localization} in $\cR$ if for any ring $R$ and any multiplicative set $W \subseteq R$ such that the localization map $R \ra W^{-1}R$ is in $\cR$, and for any ideal $I$ of $R$, we have $I^\rcl (W^{-1}R) = (I W^{-1}R)^\rcl$.

Tight closure does not commute with localization \cite{BM-unloc}, unlike Frobenius closure, plus closure, integral closure, and radical.  In joint work with Mel Hochster \cite{nmeHo-heq}, we investigated what it is that makes a persistent closure operation commute with localization, and in so doing we construct a tight-closure-like operation that \emph{does} commute with localization.  Our closure is in general smaller than tight closure, though in many cases (see below), it does in fact coincide with tight closure.

To do this, we introduce the concepts of \emph{equational} and \emph{homogeneous(ly equational)} closure operations.  The former notion is given implicitly in \cite{HH-tcz}, where Hochster and Huneke define the characteristic 0 version of tight closure in an equational way.

In the following, we let $\Lambda$ be a fixed base ring.  Often $\Lambda = \F_p$, $\Z$, or $\Q$.

\begin{defn}
Let $\cF$ be the category of \emph{finitely generated} $\Lambda$-algebras, and let $\rc$ be a persistent closure operation on $\cF$.  Then the \emph{equational} version of $\rc$, denoted $\rc \eq$, is defined on the category of \emph{all} $\Lambda$-algebras as follows:  Let $R$ be a $\Lambda$-algebra, $f\in R$, and $I$ an ideal of $R$.  Then $f\in I^{\rc \eq}$ if there exists $A \in \cF$, an ideal $J$ of $A$, $g\in A$, and a $\Lambda$-algebra map $\phi: A \ra R$ such that $g\in J^\rc$, $\phi(g)=f$, and $\phi(J) \subseteq I$.

Let $\cG$ be the category of finitely generated \emph{$\N$-graded} $\Lambda$-algebras $A$ which have the property that $[A]_0 = \Lambda$.  Let $\rc$ be a persistent closure operation on $\cG$.  Then the \emph{homogeneous(ly equational)} version of $\rc$, denoted $\rc \h$, is defined on the category of \emph{all} $\Lambda$-algebras as follows: Let $R$ be a $\Lambda$-algebra, $f\in R$, and $I$ an ideal of $R$.  Then $f \in I^{\rc \h}$ if there exists $A \in \cG$, a \emph{homogeneous} ideal $J$ of $A$, a \emph{homogeneous} element $g\in A$, and a $\Lambda$-algebra map $\phi: A \ra R$ such that $g \in J^\rc$, $\phi(g)=f$, and $\phi(J) \subseteq I$.

If $\rc$ is a closure operation on $\Lambda$-algebras, we say it is \emph{equational} if one always has $I^\rc = I^{\rc \eq}$, or \emph{homogeneous} if one always has $I^\rc = I^{\rc \h}$.
\end{defn}

In \cite{nmeHo-heq}, we prove the following theorem:

\begin{thm}
Suppose $\rc$ is a homogeneous closure operation.  Then it commutes with arbitrary localization.  That is, if $R$ is a $\Lambda$-algebra, $I$ an ideal, and $W$ a multiplicative subset of $R$, then $(W^{-1}I)^\rc = W^{-1}(I^\rc)$.
\end{thm}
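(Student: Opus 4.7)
The plan is to prove the two inclusions separately. For $W^{-1}(I^\rc) \subseteq (W^{-1}I)^\rc$, note first that $\rc\h$ is automatically persistent on the category of $\Lambda$-algebras: given any $\Lambda$-algebra map $\sigma: R \to S$, a witness $(A, J, g, \phi)$ for $h \in I^{\rc\h}$ yields the witness $(A, J, g, \sigma \circ \phi)$ for $\sigma(h) \in (\sigma(I) S)^{\rc\h}$. Applied to $R \to W^{-1}R$ and using $\rc = \rc\h$, this forces any $h \in I^\rc$ to land in $(W^{-1}I)^\rc$; since the latter is an ideal of $W^{-1}R$, we get $h/w \in (W^{-1}I)^\rc$.

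The hard direction is $(W^{-1}I)^\rc \subseteq W^{-1}(I^\rc)$. Let $f/w \in (W^{-1}I)^{\rc\h}$ be witnessed by $A \in \cG$ (with homogeneous generators $a_1,\ldots,a_n$ of degrees $d_i \geq 1$), a homogeneous ideal $J = (j_1,\ldots,j_L) \subseteq A$ with $\deg j_k = E_k$, a homogeneous $g \in J^\rc$ of degree $D$, and $\phi: A \to W^{-1}R$ with $\phi(g) = f/w$ and $\phi(J) \subseteq W^{-1}I$. Write $\phi(a_i) = r_i/s_i$ and $\phi(j_k) = x_k/t_k$ with $s_i, t_k, w \in W$ and $x_k \in I$. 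My strategy is to construct a $\Lambda$-algebra map $\psi: A \to R$ such that $\psi(J) \subseteq I$ and $w\,\psi(g) = s^N f$ for some $s \in W$ and $N \geq 1$; then persistence of $\rc$ (which holds since $\rc = \rc\h$) yields $\psi(g) \in (\psi(J)R)^\rc \subseteq I^\rc$, and multiplying by $w$ puts $s^N f$ in $I^\rc$, placing $f/w$ in $W^{-1}(I^\rc)$.

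The key device is a \emph{degree-weighted} common denominator. Put $s^{(0)} := w \cdot \prod_i s_i \cdot \prod_k t_k \in W$ and $r_i^{\natural,(0)} := ((s^{(0)})^{d_i}/s_i)\, r_i \in R$, so $\phi(a_i) = r_i^{\natural,(0)}/(s^{(0)})^{d_i}$ in $W^{-1}R$. The crucial identity is
\[
p\!\left(r_i^{\natural,(0)}/(s^{(0)})^{d_i}\right) \;=\; p(r_i^{\natural,(0)})/(s^{(0)})^E
\]
for any homogeneous polynomial $p$ of degree $E$, since each monomial $\prod a_i^{\alpha_i}$ satisfies $\sum d_i \alpha_i = E$. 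Applying this to a finite homogeneous generating set $\{p_k\}$ of the defining ideal of $A$ (assuming $\Lambda$ Noetherian) together with the vanishings from $p_k(a_i) = 0$, $\phi(j_k) = x_k/t_k$, and $\phi(g) = f/w$, one extracts witnesses $u_k^{(0)}, v_k^{(0)}, v_g^{(0)} \in W$ annihilating $p_k(r_i^{\natural,(0)})$, $j_k(r_i^{\natural,(0)}) - y_k^{(0)}$, and $g(r_i^{\natural,(0)}) - f^{(0)}$ in $R$, with $y_k^{(0)} := ((s^{(0)})^{E_k}/t_k) x_k$ and $f^{(0)} := ((s^{(0)})^D/w) f$. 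Enlarge to $s := s^{(0)} \cdot \prod u_k^{(0)} \cdot \prod v_k^{(0)} \cdot v_g^{(0)} \in W$, set $c := s/s^{(0)} \in W$, and rescale $r_i^\natural := (s^{d_i}/s_i) r_i = c^{d_i} r_i^{\natural,(0)}$. Homogeneity gives $p_k(r_i^\natural) = c^{D_k} p_k(r_i^{\natural,(0)}) = 0$ in $R$ (since $u_k^{(0)} \mid c$), so for any $M \geq 1$, the assignment $\psi(a_i) := s^{Md_i} r_i^\natural$ extends to a well-defined $\Lambda$-algebra map $\psi: A \to R$. Parallel bookkeeping (using $v_k^{(0)} \mid s$ and $v_g^{(0)} \mid s$) yields $\psi(j_k) = s^{ME_k} y_k \in I$, with $y_k := (s^{E_k}/t_k) x_k$, so $\psi(J) \subseteq I$, and $\psi(g) = (s^{(M+1)D}/w) f$; thus $w\,\psi(g) = s^{(M+1)D} f \in I^\rc$, completing the argument.

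The main obstacle is the combinatorial bookkeeping: a single $s \in W$ must simultaneously absorb all original denominators, all relation-vanishing witnesses $u_k^{(0)}$, and all equality-of-fraction witnesses $v_k^{(0)}, v_g^{(0)}$, so that the degree-weighted map $\psi(a_i) = s^{Md_i} r_i^\natural$ is at once well-defined and carries $J$ into $I$. The degree-weighting (rather than a uniform $s^M$) is essential, since $A$'s generators may have arbitrary positive degrees; it is precisely what makes every homogeneous polynomial pull out a clean power of $s$ under $\psi$.
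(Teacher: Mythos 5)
The survey itself does not prove this theorem --- it quotes it from the reference [EH11b], listed as ``in preparation'' --- so there is no in-paper argument to compare against, and I am judging your proof on its own terms. Your strategy is the natural one and its core is correct: the easy inclusion follows from the automatic persistence of $\rc \h$ exactly as you say, and for the hard inclusion the degree-weighted rescaling (replacing $\phi(a_i)$ by $s^{d_i}\phi(a_i)$, legitimate because $A$ is $\N$-graded with $[A]_0=\Lambda$ and the $a_i$ have positive degree) is precisely what the homogeneity hypothesis is designed to exploit. Your bookkeeping with the witnesses $u_k^{(0)}$, $v_k^{(0)}$, $v_g^{(0)}$ does produce a genuine $\Lambda$-algebra map $\psi\colon A\to R$ with $\psi(J)\subseteq I$ and $w\,\psi(g)=s^Nf$, and this finishes the argument. (Two small simplifications: the extra exponent $M$ is superfluous, since $M=0$ already works; and well-definedness of rescaled maps is seen most cleanly from the fact that for any $t$ in the target, $a\mapsto\sum_d t^d\phi(a_d)$ is automatically a $\Lambda$-algebra homomorphism, though you still need the relation-killing witnesses because $\psi$ must land in $R$ itself and not merely in the image of $R$ in $W^{-1}R$.)

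Two points need attention. First, you assume $\Lambda$ is Noetherian in order to have a finite homogeneous presentation of $A$, and you also implicitly take a finite homogeneous generating set $j_1,\dotsc,j_L$ of $J$; both finitenesses are genuinely used, since a single $s\in W$ must absorb one witness per defining relation and per generator of $J$. The statement places no hypothesis on $\Lambda$ (though the intended examples $\F_p$, $\Z$, $\Q$ are Noetherian), so you should either add this hypothesis or explain why the witness $(A,J,g,\phi)$ may be taken with $A$ finitely presented and $J$ finitely generated. Second, and more substantively, the expressions $(s^{(0)})^{E_k}/t_k$ and $(s^{(0)})^{D}/w$ require $E_k\geq 1$ and $D\geq 1$: your construction silently assumes that $g$ and the chosen generators of $J$ have positive degree. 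The case $\deg g=0$ is harmless (then $g\in\Lambda$, $\psi(g)$ is its image in $R$, and one concludes directly), but if $J\cap\Lambda\neq 0$ the rescaling cannot help: degree-zero elements of $J$ are sent by \emph{any} $\Lambda$-algebra map to fixed elements of $R$, and knowing only that their images lie in $W^{-1}I$ does not yield $\psi(J)\subseteq I$. Under the survey's literal definition of $\rc \h$ such witnesses are allowed, so you must either treat this case separately or make explicit that the definition you are working with requires the homogeneous data to live in positive degrees.
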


In particular, \emph{homogeneous tight closure} ($I^{*\h}$) commutes with localization. The following theorem gives circumstances under which $I^* = I^{*\h}$.  Parts (1) and (3) involve cases where tight closure was already known to commute with localization, thus providing a ``reason'' that it commutes in these cases.  Part (2) gives a reason why the coefficient field in Brenner and Monsky's counterexample is transcendental over $\F_2$.
\begin{thm}
Let $R$ be an excellent Noetherian ring which is either of prime characteristic $p>0$ or of equal characteristic $0$. \begin{enumerate}
\item If $I$ is a parameter ideal (or more generally, if $R/I$ has \emph{finite phantom projective dimension} as an $R$-module), then $I^* = I^{*\h}$.
\item If $R$ is a finitely generated and positively-graded $k$-algebra, where $k$ is an algebraic extension of $\F_p$ or of $\Q$, and $I$ is an ideal generated by forms of positive degree, then $I^* = I^{*\h}$.
\item If $R$ is a \emph{binomial ring} over any field $k$ (that is, $R = k[X_1, \dotsc, X_n] / J$, where the $X_j$ are indeterminates and $J$ is generated by polynomials with at most two terms each), then for any ideal $I$ of $R$, $I^* = I^{*\h}$.
\end{enumerate}
\end{thm}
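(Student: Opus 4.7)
Plan: The containment $I^{*\h} \subseteq I^*$ is automatic in full generality. Given a homogeneous witness $(A,J,g,\phi)$ with $g \in J^*$ in $A$, persistence of tight closure (Theorem~\ref{thm:tcgood}(1)) yields $f = \phi(g) \in (\phi(J)R)^* \subseteq I^*$. So each of the three parts reduces to showing the reverse inclusion $I^* \subseteq I^{*\h}$, and I would handle them via three different descent strategies.

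For part (1), I would exploit the machinery of phantom resolutions. The hypothesis that $R/I$ has finite phantom projective dimension yields a finite complex $F_\bullet$ of finitely generated free $R$-modules, augmented onto $R/I$, which is phantom-acyclic. Only finitely many elements of $R$ appear as matrix entries in $F_\bullet$; together with a representative of $f$ and a phantom multiplier $c \in R^\circ$, they lie in a finitely generated $\Lambda$-subring of $R$. One presents this subring as an object $A \in \cG$ by lifting each generator to a homogeneous indeterminate of suitable positive degree, and defines $\phi : A \to R$ accordingly. The phantom-acyclic complex then lifts to $A$, exhibiting the lift $g$ of $f$ as a tight-closure element of the lifted ideal $J$. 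For the special case of a parameter ideal $I=(x_1,\dotsc,x_r)$, the Koszul complex $K_\bullet(x_1,\dotsc,x_r;R)$ is itself phantom-acyclic by colon-capturing (Theorem~\ref{thm:tcgood}(4)), and the lifting is transparent.

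For part (2), I would descend directly from $k$ to $\Lambda$. Since $I$ is generated by positive-degree forms and tight closure of a homogeneous ideal is homogeneous, one may decompose $f$ into graded pieces and assume $f$ itself is a form. A tight-closure witness for $f \in I^*$ uses only finitely many coefficients from $k$; since $k$ is algebraic over $\Lambda$, these coefficients generate a finite field extension $k_0/\Lambda$. One then builds $A \in \cG$ by adjoining homogeneous indeterminates modelling both the generators of $R$ over $k_0$ and a $\Lambda$-basis of $k_0$ (placing the latter in positive degree via an auxiliary indeterminate whose image in $R$ under $\phi$ is prescribed). Finiteness of $k_0/\Lambda$ makes this finite data, and the tight-closure relation descends from $R$ to $A$. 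For part (3), I would exploit the $\Z$-grading natural to binomial rings: each defining binomial is homogeneous for a suitable positive weighting of the $X_i$, so $A_0 := \Lambda[X_1,\dotsc,X_n]/(J \cap \Lambda[X_1,\dotsc,X_n])$ is a finitely generated positively graded $\Lambda$-algebra with $[A_0]_0 = \Lambda$. A tight-closure witness in $R$ involves only finitely many elements of $k$, which we append as degree-$1$ auxiliary indeterminates (mapped by $\phi$ to their intended values) to obtain $A \in \cG$. Because binomial relations do not mix coefficients, the descent is clean.

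The main obstacle across all three parts is ensuring that the tight-closure witness genuinely lives in the descended algebra $A$, not merely in $R$: the multiplier $c \in R^\circ$ must descend to some $c_A \in A^\circ$ satisfying $c_A g^{p^e} \in J^{[p^e]}$ in $A$ for \emph{all} large $e$, whereas naive descent captures only finitely many such relations. Overcoming this uses the hypotheses crucially. Finite phantom projective dimension collapses the infinite family of relations into a single finite phantom complex that descends wholesale; algebraicity of $k$ confines the required coefficient data to a Noetherian descended algebra where the theory of test elements applies and produces a single multiplier working for all $e$; and the binomial structure provides a canonical combinatorial subring that is stable under the Frobenius action, so that a test element descended from $R$ remains a test element in $A$.
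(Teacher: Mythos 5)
You should know at the outset that the survey does not prove this theorem: it is quoted from \cite{nmeHo-heq}, listed as in preparation, so there is no in-paper argument to measure your sketch against; it has to stand on its own, and as written it does not. The easy inclusion $I^{*\h} \subseteq I^*$ via persistence is fine. The problem is the hard inclusion $I^* \subseteq I^{*\h}$, where your construction of the witness algebra is the whole content and is never actually carried out. If, as you say in part (1), you ``lift each generator to a homogeneous indeterminate of suitable positive degree,'' then $A$ is a polynomial ring over $\Lambda$, hence regular, and by tightness $J^* = J$ in $A$; your witness then certifies only $f \in \phi(J)R \subseteq I$, not $f \in I^*$. If instead you mean $A$ to be the finitely generated subring of $R$ generated by the relevant data, then there is no reason it admits the required $\N$-grading with $[A]_0=\Lambda$, and, more seriously, no reason the memberships that hold in $R$ hold in $A$. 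The assertion that ``the phantom-acyclic complex then lifts to $A$'' begs the question twice over: phantom acyclicity does not pass to an arbitrary lift (one would need, say, the standard rank-and-height conditions over $A$ together with an acyclicity criterion valid there), and even granting it, phantom acyclicity of the lifted Koszul complex is a statement about higher homology, whereas $g \in (JA)^*$ is the degree-zero membership that is precisely the thing to be proved. The same criticism applies verbatim to the words ``the descent is clean'' in part (3) and to the coefficient-field juggling in part (2).

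Your final paragraph correctly isolates the obstacle -- infinitely many conditions $c f^{p^e} \in I^{[p^e]}$ versus finitely much descended data -- but the three proposed remedies are slogans rather than arguments. Descending a single test element $c$ is not the issue: the coefficients appearing in the infinitely many witnessing equations need not lie in the image of $A$, and even when $c$, $g$, and $J$ all come from $A$, membership $c g^{p^e} \in J^{[p^e]}$ over $A$ does not follow from the corresponding membership over $R$ unless $\phi\colon A \to R$ has some purity or flatness property, which you never establish; Frobenius-stability of a binomial subring gives no such contraction statement, and ``the theory of test elements applies'' in part (2) does not address where the equations live. Finally, the theorem covers equal characteristic $0$, where your entirely Frobenius-theoretic language does not apply and one must engage the equational, reduction-to-characteristic-$p$ definition of tight closure; your sketch is silent on this case. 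In sum, the proposal reduces each part of the theorem to exactly the assertions it was supposed to prove, so there is a genuine gap in all three parts.
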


\section{Reductions, special parts of closures, spreads, and cores}\label{sec:red}
\subsection{Nakayama closures and reductions}
\begin{defn}\cite{nme*spread}
Let $(R,\m)$ be a Noetherian local ring.  A closure operation $\rc$ on ideals of $R$ is \emph{Nakayama}  if whenever $J$, $I$ are ideals such that $J \subseteq   I \subseteq (J+\m I)^\rc$, it follows that $J^\rc = I^\rc$.
\end{defn}

It turns out that many closure operations are Nakayama.  For example, integral closure \cite{NR},  tight closure \cite{nme*spread}, plus closure \cite{nmePhD}, and Frobenius closure \cite{nme-sp} are all Nakayama closures under extremely mild conditions.  The fact that the identity closure is Nakayama is a special case of the classical Nakayama lemma (which is where the name of the condition comes from).  However, radical is not Nakayama.  For example, if $R=k[\![x]\!]$ (the ring of power series in one variable over a field $k$), $J = 0$, and $I=(x) = \m$, then clearly $J \subseteq I \subseteq \sqrt{J+\m I}$, but the radical ideals $J$ and $I$ are distinct.

\begin{defn}
Let $R$ be a ring, $\rc$ a closure operation on $R$, and $J \subseteq I$ ideals.  We say that $J$ is a \emph{$\rc$-reduction} of $I$ if $J^\rc = I^\rc$.  A $\rc$-reduction $J \subseteq I$ is \emph{minimal} if for all ideals $K \subsetneq J$, $K$ is not a $\rc$-reduction of $I$.
\end{defn}

For any Nakayama closure $\rc$, one can make a very strong statement about the existence of minimal $\rc$-reductions:
\begin{lemma}\label{lem:red}\cite[Lemma 2.2]{nme*spread}
If $\rcl$ is a Nakayama closure on $R$ and $I$ an ideal, then for any $\rcl$-reduction $J$ of $I$, there is a minimal $\rcl$-reduction $K$ of $I$ contained in $J$. Moreover, in this situation any minimal generating set of $K$ extends to a minimal generating set of $J$.
\end{lemma}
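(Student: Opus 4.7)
The plan is to exploit a useful reformulation of the Nakayama property: for any ideals $K \subseteq L$ of $R$, one has $K^\rcl = L^\rcl$ if and only if $L \subseteq (K + \m L)^\rcl$. The ``if'' direction is the Nakayama hypothesis itself; the ``only if'' direction is automatic, since $L \subseteq L^\rcl = K^\rcl \subseteq (K + \m L)^\rcl$. Combined with the classical Nakayama lemma, which forces $K + \m L \subsetneq L$ whenever $K \subsetneq L$ (both being finitely generated), this biconditional is essentially the only tool needed for both conclusions.

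For existence, I would construct a descending sequence $J = L_0 \supsetneq L_1 \supsetneq \cdots$ of $\rcl$-reductions of $I$ with strictly decreasing $\mu(L_i)$. When $L_i$ is not yet minimal, pick any proper sub-reduction $L' \subsetneq L_i$ of $I$; the classical Nakayama lemma then forces $L' + \m L_i \subsetneq L_i$, so the image of $L' + \m L_i$ in the $(R/\m)$-vector space $L_i/\m L_i$ has dimension $t < \mu(L_i)$. Choose a minimal generating set $a_1, \dotsc, a_{\mu(L_i)}$ of $L_i$ whose first $t$ residues span $(L' + \m L_i)/\m L_i$, and write $a_j = l_j + m_j$ with $l_j \in L'$ and $m_j \in \m L_i$ for $j \leq t$. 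Setting $L_{i+1} := (l_1, \dotsc, l_t)$, a direct check gives $L_{i+1} + \m L_i = L' + \m L_i$, so $L_i \subseteq (L' + \m L_i)^\rcl = (L_{i+1} + \m L_i)^\rcl$, and the Nakayama property yields $L_{i+1}^\rcl = L_i^\rcl = I^\rcl$ while $\mu(L_{i+1}) \leq t < \mu(L_i)$. Since $\mu$ takes values in $\N$, the process terminates at a minimal $\rcl$-reduction $K \subseteq J$.

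For the ``moreover'' clause, let $K$ be a minimal $\rcl$-reduction of $I$ contained in $J$, with minimal generators $b_1, \dotsc, b_s$. The goal is to show $K \cap \m J = \m K$; once this is established, the map $K/\m K \hookrightarrow J/\m J$ is injective, the classes $\bar b_1, \dotsc, \bar b_s$ are $(R/\m)$-linearly independent in $J/\m J$, and any extension of them to a basis lifts to a minimal generating set of $J$ containing $b_1, \dotsc, b_s$. Suppose toward contradiction that some $x \in K \cap \m J$ lies outside $\m K$; writing $x = \sum r_i b_i$, some coefficient $r_i$ must be a unit, and the corresponding $b_i$ becomes an $R$-linear combination of $x$ and the remaining $b_j$. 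Hence $K \subseteq K' + (x) \subseteq K' + \m J$ with $K' := (b_1, \dotsc, \widehat{b_i}, \dotsc, b_s)$, so $J \subseteq K^\rcl \subseteq (K' + \m J)^\rcl$ and the Nakayama property gives $K'^\rcl = J^\rcl = I^\rcl$. Since $K' \subsetneq K$ (because $b_1, \dotsc, b_s$ was a minimal generating set), this contradicts the minimality of $K$ as a $\rcl$-reduction.

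The only real conceptual hurdle is the existence step. Naively, one might try to delete a generator of $L_i$ that happens to lie in the closure of the others together with $\m L_i$, but that move does not obviously reduce $\mu(L_i)$, because the resulting ideal might acquire new minimal generators inherited from $L_i$ itself. The adapted-basis trick -- lifting a basis of $(L' + \m L_i)/\m L_i$ to a partial minimal generating set of $L_i$ and then subtracting the $\m L_i$-error to land inside $L'$ -- is the device that actually guarantees a strict drop in $\mu$. Everything else is straightforward bookkeeping with the biconditional from the first paragraph.
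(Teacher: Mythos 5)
Your proof is correct and follows the standard Northcott--Rees-style line of argument: a strict descent on the minimal number of generators of $L_i$, obtained by lifting a basis of the image of a proper sub-reduction in $L_i/\m L_i$ and subtracting the $\m L_i$-error so as to land inside the sub-reduction; then the claim $K \cap \m J = \m K$ handles the extension-of-generating-sets part. The paper cites \cite{nme*spread} for this lemma and gives no proof of its own, but your argument (including the correct choice to work in $L_i/\m L_i$ at each step, rather than once and for all in $J/\m J$, which is exactly what is needed to make the generator count actually drop) is essentially the expected one.
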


This is another reason to see that radical is not Nakayama, as, for instance, the ideal $(x)$ in the example above has no minimal $\sqrt{\ }$-reductions at all.

\subsection{Special parts of closures}
Consider the following notion from \cite{nme-sp}:

\begin{defn}
Let $(R,\m)$ be a Noetherian local ring and $\rc$ a closure operation on $R$.  Let $\cI$ be the set of all ideals of $R$.  Then a set map $\rc \spc: \cI \ra \cI$ is a \emph{special part of $\rc$} if it satisfies the following properties for all $I, J \in \cI$: \begin{enumerate}
\item (Trapped) $\m I \subseteq I^{\rc \spc} \subseteq I^\rc$,
\item (Depends only on the closure) $(I^\rc)^{\rc \spc} = I^{\rc \spc}$,
\item (Order-preserving) If $J \subseteq I$ then $J^{\rc \spc} \subseteq I^{\rc \spc}$,
\item (Special Nakayama property) If $J \subseteq I \subseteq (J + I^{\rc \spc})^\rc$, then $I \subseteq J^\rc$.
\end{enumerate}
\end{defn}

Of course, any closure operation $\rc$ that admits a special part $\rc \spc$ must be a Nakayama closure.  Examples of special parts of closures include: \begin{itemize}
\item ``Special tight closure'' (first defined by Adela Vraciu in \cite{Vr*ind}), when $R$ is excellent and of prime characteristic $p$: \[
I^{*\spc} := \{f \in R \mid x^{q_0} \in (\m I^{[q_0]})^* \text{ for some power } q_0 \text{ of } p\}.
\]
\item The special part of Frobenius closure (see \cite{nme-sp}), when $R$ has prime characteristic $p$: \[
I^{F\spc}:= \{f \in R \mid x^{q_0} \in \m I^{[q_0]} \text{ for some power } q_0 \text{ of } p \}.
\]
\item The special part of integral closure (see \cite{nme-sp}): \[
I^{-\spc} := \{f \in R \mid x^n \in (\m I^n)^- \text{ for some } n \in \N \}.
\]
\item The special part of plus closure (when $R$ is a domain) (see \cite{nmePhD}): \[
I^{+\spc} := \{f \in R \mid f \in \Jac(S)IS \text{ for some module-finite domain extension } R \ra S\},
\]
where $\Jac(S)$ is the Jacobson radical of $S$
\end{itemize}

All of these are in fact special parts of the corresponding closures.  Moreover, we have the following: \begin{prop}\ \begin{itemize}
\item If $(R,\m)$ has a perfect residue field, then $I^F = I + I^{F\spc}$ for all ideals $I$. \cite{nme-sp}
\item If $(R,\m)$ is a Henselian domain with algebraically closed residue field, then $I^+ = I + I^{+\spc}$ for all ideals $I$. \cite{nmePhD}
\item If $(R,\m)$ is either an excellent, analytically irreducible domain with algebraically closed residue field, or an excellent normal domain with perfect residue field, then $I^* = I+I^{*\spc}$ for all ideals $I$. \cite{HuVr, nme*spread}
\end{itemize}
\end{prop}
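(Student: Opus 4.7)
In each of the three assertions the inclusion $I + I^{\text{cl}\spc} \subseteq I^{\text{cl}}$ is immediate: $I\subseteq I^{\text{cl}}$ by extension and $I^{\text{cl}\spc}\subseteq I^{\text{cl}}$ by the trapped axiom of a special part. So the content lies entirely in the reverse inclusion, which I plan to prove by a common \emph{coefficient-splitting} template.

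The template runs as follows. Fix generators $a_1,\dotsc,a_n$ of $I$. The hypothesis $f\in I^{\text{cl}}$ produces a presentation of $f$ (or of $f^q$, in the Frobenius or tight setting) as a combination $\sum_j s_j a_j$ of these generators whose coefficients $s_j$ lie in an auxiliary ring $T$ that maps compatibly to $R/\m$. The residue-field hypothesis (perfect or algebraically closed) then lets me split each $s_j$ as $r_j+t_j$ with $r_j\in R$ lifting the residue of $s_j$ and $t_j$ in the maximal ideal of $T$; setting $i:=\sum_j r_j a_j\in I$ and $g:=f-i$ gives the candidate to lie in $I^{\text{cl}\spc}$.

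For part~(1), write $f^q=\sum_j c_j a_j^q$ with $q$ a power of $p$; perfectness of $R/\m$ gives $d_j\in R$ with $d_j^q\equiv c_j\pmod{\m}$, and then in characteristic $p$ one computes $g^q=f^q-\bigl(\sum_j d_j a_j\bigr)^q=\sum_j(c_j-d_j^q)a_j^q\in\m I^{[q]}$, so $g\in I^{F\spc}$. For part~(2), pick a module-finite domain extension $R\to S$ with $f\in IS$; Henselianness of $R$ forces the module-finite domain extension $S$ to be local, and the algebraically closed residue field forces $S/\m_S=R/\m$, so every coefficient $s_j\in S$ in $f=\sum_j s_j a_j$ splits as $r_j+t_j$ with $r_j\in R$ and $t_j\in\m_S=\Jac(S)$; hence $g=\sum_j t_j a_j\in\Jac(S)\cdot IS$, i.e.\ $g\in I^{+\spc}$.

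Part~(3) is the main obstacle, because $f\in I^*$ is not witnessed by a single auxiliary ring but by an asymptotic family of test-element equations $cf^q\in I^{[q]}$. My plan is first to reduce to the complete case using excellence (with analytic irreducibility keeping the completion a domain); then to replace $I$ by a minimal reduction via Lemma~\ref{lem:red} and the fact that tight closure is Nakayama; and then to run the splitting on an equation $cf^q=\sum_j b_j a_j^q$, extracting $q$-th roots of the residues of the $b_j$ using the algebraically closed (resp.\ perfect) residue field to produce an ``honest'' part $i\in I$ with the error absorbed into $\m I^{[q]}$ after a further tight-closure operation. The crux is to do this uniformly enough in $q$ that the leftover $g=f-i$ satisfies $g^{q_0}\in(\m I^{[q_0]})^*$ for a \emph{single} exponent $q_0$; this uniformity is the main obstacle and is where the technical work of \cite{HuVr, nme*spread} enters, using normality (resp.\ analytic irreducibility) to control how the residue-field split interacts with the test-element estimates across varying~$q$.
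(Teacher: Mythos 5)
A preliminary remark: the paper does not actually prove this proposition --- it is stated as a summary of results from \cite{nme-sp}, \cite{nmePhD}, and \cite{HuVr, nme*spread} --- so your proposal must be judged against those sources rather than against an argument in the text. Your first two bullets are correct and are essentially the arguments of the cited sources. For Frobenius closure, writing $f^q=\sum_j c_j a_j^q$, lifting $q$-th roots of the residues $\bar c_j$ (perfectness), and using that $q$-th powers are additive gives $g^q=\sum_j(c_j-d_j^q)a_j^q\in\m I^{[q]}$, so $g\in I^{F\spc}$; for plus closure, Henselianness does make the module-finite domain extension $S$ local with $\Jac(S)$ lying over $\m$, the algebraically closed residue field gives $S/\Jac(S)=R/\m$, and the split $s_j=r_j+t_j$ puts $g=f-i\in R\cap\Jac(S)IS$, i.e.\ $g\in I^{+\spc}$. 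Both are complete proofs.

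The third bullet is where there is a genuine gap, and it is not merely the ``uniformity in $q$'' you flag. The coefficient-splitting template does not transfer to tight closure at all: from $f\in I^*$ one only has $cf^q\in I^{[q]}$ for a fixed test element $c$ and all large $q$, and writing $cf^q=\sum_j b_j a_j^q$ and extracting roots of the residues of the $b_j$ would at best decompose $cf^q$, not $f$; the multiplier $c$ cannot be stripped off, so no element $i\in I$ with $f-i\in I^{*\spc}$ is produced this way. Moreover the putative split varies with $q$, while membership in $I^{*\spc}$ demands a single $q_0$ with $g^{q_0}\in(\m I^{[q_0]})^*$. This is not a technicality to be delegated; it is the entire content of the theorem, and the proofs in \cite{HuVr} and \cite{nme*spread} are of a different character --- minimal $*$-reductions/$*$-independence and colon/test-element estimates in the complete case with algebraically closed residue field, with the excellent normal, perfect-residue-field case reached by a faithfully flat base change that preserves the domain hypotheses --- rather than root extraction on coefficient residues. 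So bullets (1) and (2) stand, but bullet (3) is, as written, a plan whose essential step is missing.
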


This property is called \emph{special $\rc$-decomposition}, and goes back to the theorem of Huneke and Vraciu \cite{HuVr} on tight closure, and K. Smith's theorem \cite[Theorem 2.2]{Sm-tcgraded} as a kind of pre-history. Of course integral closure fails this property rather badly.  Nevertheless, special $\rc$-decompositions allow for the following, matroidal proof of the existence of the notion of ``spread'' in such circumstances:

\begin{thm}
Suppose $\rc$ is a closure with a special part $\rc \spc$.  Suppose the following property holds: \begin{enumerate}
\item[($\alpha$)] $I^\rc = I + I^{\rc \spc}$ for all ideals $I$ of $R$.
\end{enumerate}
Let $I$ be an arbitrary ideal.  Then every minimal $\rc$-reduction of $I$ is generated by the same number of elements: the \emph{$\rc$-spread} $\ell^\rc(I)$ of $I$.
\end{thm}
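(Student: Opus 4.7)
The plan is to identify the minimal number of generators of a minimal $\rc$-reduction with the dimension of a canonical vector space over the residue field $k := R/\m$, namely $V := I^\rc / I^{\rc\spc}$. Once this is done, the conclusion is immediate because $V$ depends only on $I$ (in fact only on $I^\rc$).

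First I would verify that $V$ really is a $k$-vector space. Applying the trapped property of $\rc\spc$ to the ideal $I^\rc$ and using that $(I^\rc)^{\rc\spc} = I^{\rc\spc}$ (property (2)), we get $\m I^\rc \subseteq I^{\rc\spc}$, so $V$ is annihilated by $\m$. Next, for any $\rc$-reduction $J$ of $I$, the same identity $J^{\rc\spc} = (J^\rc)^{\rc\spc} = (I^\rc)^{\rc\spc} = I^{\rc\spc}$ together with ($\alpha$) applied to $J$ yields $J + I^{\rc\spc} = J + J^{\rc\spc} = J^\rc = I^\rc$, so the natural map $J \twoheadrightarrow V$ is surjective.

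The crucial step, and the one I expect to be the heart of the argument, is to show that for a \emph{minimal} $\rc$-reduction $J$ one has $J \cap I^{\rc\spc} = \m J$. The containment $\m J \subseteq J \cap I^{\rc\spc}$ is trapped. For the reverse, suppose toward contradiction that there is some $x \in J \cap I^{\rc\spc}$ with $x \notin \m J$. Pick a minimal generating set $f_1, \dots, f_n$ of $J$; writing $x = \sum r_i f_i$, standard Nakayama forces some $r_i$ to be a unit, say $r_1$. Then $f_1 \in (f_2, \dots, f_n) + I^{\rc\spc}$, so the ideal $K := (f_2, \dots, f_n)$ satisfies $K \subseteq J \subseteq K + I^{\rc\spc} \subseteq (K + J^{\rc\spc})^\rc$ (using $J^{\rc\spc} = I^{\rc\spc}$ and extension). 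The special Nakayama property (4) then gives $J \subseteq K^\rc$, so $K^\rc = J^\rc = I^\rc$, i.e.\ $K$ is a $\rc$-reduction of $I$ strictly smaller than $J$ (since $f_1 \notin K$ by minimality of the generating set), contradicting minimality.

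From $J \cap I^{\rc\spc} = \m J$ and the surjection $J \twoheadrightarrow V$ we obtain an isomorphism $J/\m J \xrightarrow{\sim} V$ of $k$-vector spaces. By Nakayama's lemma the minimal number of generators of $J$ is $\dim_k(J/\m J) = \dim_k V$, a quantity intrinsic to $I$. Defining $\ell^\rc(I) := \dim_k(I^\rc/I^{\rc\spc})$ finishes the proof. The main obstacle is the bookkeeping in the exchange step, where one must apply axiom (4) exactly in the form $A \subseteq B \subseteq (A + B^{\rc\spc})^\rc \Rightarrow B \subseteq A^\rc$ and remember that $B^{\rc\spc}$ is computed with respect to $B$, not $I$ — which is why we first needed the identity $J^{\rc\spc} = I^{\rc\spc}$ valid for every $\rc$-reduction.
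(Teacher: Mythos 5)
Your argument is correct, and every step checks out against the axioms: the trapped property applied to $I^\rc$ together with $(I^\rc)^{\rc\spc}=I^{\rc\spc}$ makes $V=I^\rc/I^{\rc\spc}$ a vector space over $R/\m$; for any $\rc$-reduction $J$ the identity $J^{\rc\spc}=(J^\rc)^{\rc\spc}=(I^\rc)^{\rc\spc}=I^{\rc\spc}$ plus ($\alpha$) gives $I^\rc=J+I^{\rc\spc}$; and your key lemma $J\cap I^{\rc\spc}=\m J$ for a \emph{minimal} reduction is a valid application of the special Nakayama property in the form you state (with $K=(f_2,\dotsc,f_n)\subseteq J\subseteq (K+J^{\rc\spc})^\rc$, yielding $J\subseteq K^\rc$, hence $K^\rc=I^\rc$ and a strictly smaller reduction, contradiction). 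However, this is a genuinely different route from the paper's. The paper compares two minimal reductions $J=(a_1,\dotsc,a_n)$ and $K=(b_1,\dotsc,b_r)$ directly by a matroid-style exchange: a descending induction replaces the $b_i$ by the $a_i$ one at a time, using ($\alpha$) to write $b_i$ modulo $L_i^{\rc\spc}$ and the special Nakayama property to keep each hybrid ideal $L_i=(a_1,\dotsc,a_i,b_{i+1},\dotsc,b_n)$ a $\rc$-reduction, concluding $n=r$ from minimality of $K$. Your approach instead attaches to $I$ the intrinsic invariant $\dim_k(I^\rc/I^{\rc\spc})$ and shows each minimal reduction $J$ satisfies $J/\m J\cong I^\rc/I^{\rc\spc}$, so $\mu(J)$ is forced. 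What each buys: yours delivers, essentially for free, the explicit formula $\ell^\rc(I)=\dim_k\bigl(I^\rc/I^{\rc\spc}\bigr)$, which the paper's proof does not exhibit; the paper's exchange argument, on the other hand, proves a stronger matroid-type exchange statement between generating sets of distinct minimal reductions (in the spirit of the classical Northcott--Rees argument and of the ``matroidal'' viewpoint the surrounding discussion emphasizes), which your invariant argument does not recover. One small wording quibble: where you say ``standard Nakayama forces some $r_i$ to be a unit,'' no Nakayama is needed --- if all $r_i\in\m$ in the chosen expression then $x\in\m J$ outright; Nakayama is only needed to see $K\neq J$ and to identify $\mu(J)$ with $\dim_k(J/\m J)$.
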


\begin{proof}
Let $J$, $K$ be minimal $\rc$-reductions of $I$.  Say $\{a_1, \dotsc, a_n\}$ is a minimal generating set for $J$, $\{b_1, \dotsc, b_r\}$ is a minimal generating set for $K$, and $n\leq r$.

\noindent \textbf{Claim:} There is a reordering of the $a_j$'s in such a way that for each $0\leq i \leq n$, we have $I \subseteq L_i^\rc$, where $L_i := (a_1, \dotsc, a_i, b_{i+1}, \dotsc, b_n)$.

\begin{proof}[Proof of Claim]
We proceed by descending induction on $i$.  If $i=n$ there is nothing to prove.  So assume that $I \subseteq L_i^\rc$ for some $1\leq i\leq n$.  We need to show that $I \subseteq L_{i-1}^\rc$.  It suffices to show that $a_i \in L_{i-1}^\rc$, since this would imply that $L_i \subseteq L_{i-1}^\rc$.

Note that $b_i \in I^\rc = L_i^\rc = L_i + L_i^{\rc \spc}$ (by property ($\alpha$)).  Hence, there exist $r_j, s_j \in R$ such that \[
b_i + \sum_{j=i+1}^n r_j b_j + \sum_{j=1}^i s_j a_j \in L_i^{\rc \spc}.
\]
If all the $s_j \in \m$, then \[
b_i + \sum_{j=i+1}^n r_j b_j  \in (\m L_i + L_i^{\rc \spc} = L_i^{\rc \spc} = I^{\rc \spc}) \cap K = K^{\rc \spc} \cap K = \m K,
\]
contradicting the fact that the $b_j$ form a minimal generating set for $K$.  Hence, by reordering the $a_j$, we may assume that $s_i \notin \m$.

Thus, $a_i \in L_{i-1} + L_i^{\rc\spc}$.  It then follows from the special Nakayama property that $a_i \in L_{i-1}^\rc$, as required.
\end{proof}
Applying the Claim with $i=0$ gives that $I \subseteq L_0^\rc = (b_1, \dotsc, b_n)^\rc$.  But since $K$ was a \emph{minimal} $\rc$-reduction of $I$, it follows that $(b_1, \dotsc, b_n) = K = (b_1, \dotsc, b_r)$, whence $n=r$.
\end{proof}

\begin{rmk}
Integral closure does not satisfy quite the same matroidal properties shared by plus closure, tight closure, and Frobenius closure, but it does so `generically' as shown in \cite{nmeBrn-mt}.
\end{rmk}

It is natural to ask the following: Suppose $\rc$ is a closure operation with well-defined $\rc$-spread $\ell^\rc$.  If one picks a random collection of $\ell^c(I)$ elements of $I/\m I$, will their lifts to $I$ generate a minimal $\rc$-reduction of $I$?  This is unknown in general, but at least for integral closure and tight closure, the answer is often ``yes'':
 \begin{prop} Let $I$ be an ideal of the Noetherian local ring $(R,\m)$.  Suppose we are in one of the following situations: \begin{itemize}
\item \cite[Theorem 5.1]{NR} $R$ has infinite residue field, and $\rc =$ integral closure.   
\item \cite[Theorem 4.5]{FoVa-core} $\chr R=p>0$ prime, $R$ is excellent and normal with infinite perfect residue field, and $\rc=$ tight closure.
\end{itemize}
Let $\ell := \ell^\rc(I)$.  Then there is a Zariski-open subset $U \subseteq (I/\m I)^\ell$ such that whenever $x_1, \dotsc, x_\ell \in I$ are such that $(x_1 + \m I, \dotsc, x_\ell + \m I) \in U$, the ideal $(x_1, \dotsc, x_\ell)$ is a minimal $\rc$-reduction of $I$.
\end{prop}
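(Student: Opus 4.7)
The plan is to reduce both parts to a single linear-algebra condition on the images of $x_1, \dotsc, x_\ell$ inside the $k$-vector space $V := I/\m I$ (where $k := R/\m$), although the relevant subspace of $V$ is constructed by different tools in the two cases.

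For tight closure (case 2), the hypotheses place us in the second bullet of the preceding proposition, so we have the special decomposition $I^* = I + I^{*\spc}$ and the closure $*$ admits the special part $*\spc$. Let $W \subseteq V$ be the image of $I^{*\spc}$. I would first verify that $\dim_k(V/W) = \ell^*(I)$: taking any minimal $*$-reduction $J = (b_1, \dotsc, b_\ell)$ with $\ell = \ell^*(I)$, the images $\bar b_i$ span $V/W$ because $I = J + I^{*\spc}$, and they are independent in $V/W$ because a nontrivial relation, combined with the special Nakayama property, would produce a strictly smaller $*$-reduction inside $J$. I would then let $U \subseteq V^\ell$ be the locus of $\ell$-tuples whose images in $V/W$ are linearly independent (equivalently, form a basis, since $\dim V/W = \ell$). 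This locus is defined by the non-vanishing of the $\ell \times \ell$ minors of the associated linear map $k^\ell \to V/W$, so it is Zariski open, and it is nonempty because $k$ is infinite. Finally, given $x_1, \dotsc, x_\ell \in I$ with $(\bar x_1, \dotsc, \bar x_\ell) \in U$, set $J := (x_1, \dotsc, x_\ell)$. Since the $\bar x_i$ span $V/W$, we have $I = J + I^{*\spc} + \m I = J + I^{*\spc}$ (using the trapping $\m I \subseteq I^{*\spc}$), whence $I \subseteq (J + I^{*\spc})^*$ and the special Nakayama property gives $I \subseteq J^*$. Moreover, the $\bar x_i$ remain linearly independent in $V$, so $J$ is minimally generated by $\ell$ elements; combined with the spread theorem and Lemma~\ref{lem:red}, this forces $J$ itself to be a minimal $*$-reduction.

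For integral closure (case 1), the decomposition $I^- = I + I^{-\spc}$ \emph{fails} in general, as the preceding remark emphasizes, so the special-part machinery is unavailable. Instead I would appeal to the classical Northcott--Rees viewpoint: let $F(I) := \bigoplus_{n\geq 0} I^n/\m I^n$ be the fiber cone, a finitely generated graded $k$-algebra whose Krull dimension equals the analytic spread $\ell^-(I)$. It is standard that $J = (x_1, \dotsc, x_\ell) \subseteq I$ is a minimal reduction precisely when the images $\bar x_i \in F(I)_1 = V$ form a homogeneous system of parameters for $F(I)$. Over an infinite residue field, Noether normalization (equivalently, graded prime avoidance applied degree-by-degree) shows that the set of $\ell$-tuples in $V^\ell$ that form an h.s.o.p.\ for $F(I)$ is Zariski open and nonempty, yielding the desired $U$.

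The main obstacle is the tight closure case, because everything rests on two non-formal inputs: the identification $\dim_k(V/W) = \ell^*(I)$, and the passage from "$I = J + I^{*\spc}$" to "$I \subseteq J^*$". Both are supplied by the special Nakayama property and the decomposition $I^* = I + I^{*\spc}$, and this is exactly why the proposition must impose that $R$ be excellent and normal with an infinite \emph{perfect} residue field—the hypothesis guaranteeing the special $*$-decomposition in the previous proposition. For integral closure the openness is automatic from the h.s.o.p.\ characterization; the delicate point is merely the translation between "minimal reduction of $I$" and "h.s.o.p.\ in $F(I)_1$", which is the content of Northcott--Rees.
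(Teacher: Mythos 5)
This proposition is one the survey quotes rather than proves: the paper simply cites Northcott--Rees for the integral closure case and Fouli--Vassilev for the tight closure case, so there is no internal proof to compare against. Your reconstruction is essentially correct and, in fact, runs along the same lines as those sources: the fiber cone $F(I)=\bigoplus_n I^n/\m I^n$ and the characterization of (minimal) reductions by homogeneous systems of parameters in degree one is exactly the Northcott--Rees mechanism (the openness of the h.s.o.p.\ locus over an infinite residue field is the nontrivial content of their Theorem 5.1, so you are re-deriving rather than circumventing it), while the tight closure case via the special part, the decomposition $I^* = I + I^{*\spc}$, and the special Nakayama property is the route taken by Fouli and Vassilev, which is why their hypotheses (excellent, normal, infinite perfect residue field) appear verbatim. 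Your verification that $\dim_k(V/W)=\ell^*(I)$ and the promotion of ``spans $V/W$'' to ``is a minimal $*$-reduction'' via special Nakayama, Lemma~\ref{lem:red}, and the spread theorem are all sound.

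One imprecision is worth fixing: $I^{*\spc}$ is in general \emph{not} contained in $I$ (it is only trapped between $\m I$ and $I^*$), so ``the image of $I^{*\spc}$ in $I/\m I$'' is not literally defined. Take instead $W := \bigl((I^{*\spc}\cap I) + \m I\bigr)/\m I$. The spanning step then needs the observation that for a minimal $*$-reduction $J$ one has $I \subseteq J^* = J + J^{*\spc} = J + I^{*\spc}$, using property (2) of special parts ($J^{*\spc}=(J^*)^{*\spc}=(I^*)^{*\spc}=I^{*\spc}$), after which any $x\in I$ decomposes as $j+s$ with $s = x-j \in I^{*\spc}\cap I$; the rest of your argument goes through unchanged with this $W$.
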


As soon as one knows that minimal $\rc$-reductions exist in the sense given in Lemma~\ref{lem:red} (thus, for any Nakayama closure $\rc$), one can investigate the \emph{$\rc$-$\core$} of an ideal, which is by definition the intersection of all its (minimal) $\rc$-reductions.  When $\rc$=integral closure (in which case we just call this object the \emph{core}), this notion was first investigated by Rees and Sally \cite{ReSa-core}.  They showed that if $R$ is a \emph{regular local ring} of dimension $d$, then for any ideal $I$, $(I^d)^- \subseteq \core(I)$.  Fouli and Vassilev \cite{FoVa-core} have investigated the relationship between the core and the $*$-core of ideals in rings of prime characteristic $p$.

In joint work with Holger Brenner, we are developing a related notion (which one could call ``specific $\rc$-closure'' $\rc \sigma$), which we show in many cases coincides with $\rc \spc$, defined as follows: Let $\rc$ be a closure operation defined by certain properties of generic forcing algebras (see \ref{sub:forcing}).  That is, let $(R,\m)$ be local and for $f\in R$ and an ideal $I = (f_1, \dotsc, f_n)$ of $R$, let $A = R[T_1, \dotsc, T_n] / (f + \sum_{i=1}^n f_i T_i)$.  Consider the ideal $\n := \m A + (T_1, \dotsc, T_n)$ of $A$.  Suppose that $\cP$ is a property of $R$-algebras such that $f \in I^\rc$ if and only if $A$ has property $\cP$.  We say that $f$ is in the \emph{specific $\rc$-closure} of $I$ if $A_\n$ also has property $\cP$.  In the cases where both are defined, we can show that $I^{\rc \spc} \subseteq I^{\rc \sigma}$.  Moreover, we show that $F \spc = F \sigma$ in general, that $+ \spc = +\sigma$ in Henselian domains (e.g. complete local domains), and that $*\spc = \star\sigma$ in normal local domains of prime characteristic with perfect residue field.

\section{Classes of rings defined by closed ideals}\label{sec:rings}

A typical reason that a closure operation is studied in the first place is often that the closedness of certain classes of ideals is related (and often equivalent) to the ring having certain desirable properties.  In the following, we will give examples based on various closure operations.

\subsection{When is the zero ideal closed?}
When $\rc = F, +, *, \star, \sqrt{\ },$ or $-$, we have $0^\rc = \sqrt{0}$, the \emph{nilradical} of the ring.  Hence, in these cases, the zero ideal is closed iff the ring is \emph{reduced}.

For a local ring $(R,\m)$, we have $(0:\m^\infty) = H^0_{\m}(R)$.  So the zero ideal is $\m$-saturated iff $\depth R =\max\{\dim R, 1\}$ (that is, iff $R$ is an (S$_1$) ring).

\subsection{When are 0 and principal ideals generated by non-zerodivisors closed?}\label{sub:princlosed}
\begin{example*}[Radical]
For a Noetherian ring $R$, the answer here is: ``$R$ is a product of fields".  First, note that if $R$ is a product of fields, all ideals are radical.  Conversely, $R$ is a Noetherian ring where  $0$ and all principal ideals generated by non-zerodivisors are radical.  As noted above, the fact that $0$ is radical means that $R$ is reduced.  Now take any non-zerodivisor $z$ of $R$.  We have $z \in \sqrt{(z^2)} = (z^2)$ (since $z^2$ is a non-zerodivisor), so there is some $r\in R$ with $z = rz^2$.  That is, $z(1-rz)=0$.  Since $z$ is a non-zerodivisor, this means that $rz=1$, so that $z$ is a unit.  Hence $R$ is \emph{a reduced ring in which all non-zerodivisors are units}, and thus a product of fields.

When $R$ is reduced, this condition may be written: $R=Q(R)$.
\end{example*}

\begin{example*}[Integral and tight closures] Recall that for reduced rings, tight closure and integral closure coincide for principal ideals.  So in these cases, we may look at integral closure, in which case the answer is: ``$R$ is integrally closed in its total ring of fractions".

For suppose $R$ is integrally closed in its total ring $Q$ of fractions.  Let $f \in R$ be a non-zerodivisor, and suppose $g\in (f)^-$.  Then we have an equation of the form \[
g^n + a_1 f g^{n-1} + \cdots + a_n f^n = 0,
\]
where $a_1, \dotsc, a_n \in R$.  Dividing through by $f^n$, we get the equation: \[
(g/f)^n + a_1 (g/f)^{n-1} + \cdots + a_n = 0,
\]
which shows that the element $g/f\in Q$ is integral over $R$.  But this means that $g/f \in R$, whence $g \in (f)$.  The converse statement follows the same steps in reverse.

For reduced rings, this condition may be written: $R=R^+ \cap Q(R)$.
\end{example*}

\begin{example*}[Frobenius closure]
For a Noetherian ring of characteristic $p>0$, the answer here is: ``$R$ is weakly normal''\footnote{Weak normality has a complicated history.  The reader may consult the recent guide, \cite{Vit-survey}.}.  To see this, recall first \cite[Proposition 1]{Ito-wn} that a Noetherian ring $R$ is weakly normal if and only if the following conditions hold (where $Q$ is the total quotient ring of $R$): \begin{enumerate}
\item $R$ is reduced.
\item For every $f\in Q$ such that $f^2, f^3 \in R$, one has $f\in R$.
\item For every $f\in Q$ such that there exists a prime integer $n$ with $f^n, nf \in R$, one has $f\in R$.
\end{enumerate}
Suppose first that $R$ is weakly normal.  The fact that $R$ is reduced means that $0$ is Frobenius closed.  So let $z$ be a non-zerodivisor of $R$ and suppose $y \in (z)^F$.  Then there is some $a\in R$ and $q=p^e$ such that $y^{p^e}\in (z^{p^e})$ as an ideal of $R$.  In $Q(R)$, this means that $(y/z)^{p^e} = ((y/z)^{p^{e-1}})^p \in R$.  But we also have $p \cdot (y/z)^{p^{e-1}} = 0 \in R$, so by condition (3) above, it follows that $(y/z)^{p^{e-1}} \in R$, i.e., $y^{p^{e-1}} \in (z^{p^{e-1}})$.   By induction on $e$, it follows that $y \in (z)$.

Conversely, suppose that $0$ and every principal ideal generated by a non-zerodivisor are Frobenius closed.  Condition (1) follows from $0$ being Frobenius closed.  For condition (2), note that there exist integers $a,b\in \N$ such that $2a+3b=p$.  So if $f^2, f^3 \in R$, it follows that $f^p = (f^2)^a (f^3)^b \in R$ as well.  But $f=y/z$ for some $y,z \in R$, where $z$ is a non-zerodivisor, so $f^p \in R$ means that $y^p \in (z^p)$, so that $y \in (z)^F = (z)$, whence $f\in R$.  As for condition (3), let $f$, $n$ be as given.  If $n\neq p$, then the image of $n$ in $R$ is a unit, so that the fact that $nf \in R$ means that $f\in R$ automatically.  So we may assume that $n=p$.  But then $f^p \in R$, which means that $y^p \in (z^p)$ (where $f=y/z$ for some non-zerodivisor $z$), whence $y \in (z)^F = (z)$ and $f\in R$.
\end{example*}

\subsection{When are parameter ideals closed (where $R$ is local)?}
For this subsection and the next, we need the following definitions (which represent a special case of ``reduction to characteristic $p$''):

\begin{defn}\label{def:type}
Let $\cP$ be a property of positive prime characteristic rings.  Let $R$ be a finitely generated $K$-algebra, where $K$ is a field of characteristic $0$.  That is, $R = K[X_1, \dotsc, X_n] / (f_1, \dotsc, f_m)$.  Let $A$ be the $\Z$-subalgebra of $K$ generated by the \emph{coefficients} of the polynomials $f_1, \dotsc, f_m$.  Let $R_A := A[X_1, \dotsc, X_n] / (f_1, \dotsc, f_m)$.  We say that $R$ is \emph{of $\cP$-type} (resp. \emph{dense $\cP$-type}) if there is a nonempty Zariski-open subset $U$ of the maximal ideal space of $A$ (resp. an infinite set $U$ of maximal ideals of $A$) such that for all $\mu \in U$, the ring $R_A \otimes_A A/\mu$ has property $\cP$.
\end{defn}

\begin{example*}[Frobenius closure]
Suppose $(R,\m)$ is a local \emph{Cohen-Macaulay} ring of prime characteristic $p>0$.  Consider the map $R \ra {}^1R$ defined in \ref{sub:fromcon}(\ref{it:Frob}).  By definition $R$ is \emph{$F$-injective} if the induced maps on local cohomology $H^i_\m(R) \ra H^i_\m({}^1R)$ are injective.  All parameter ideals are Frobenius closed if and only if $R$ is $F$-injective.

Moreover, there are connections to characteristic 0 singularity theory.  Namely, if $R$ is a reduced, finitely generated $\Q$-algebra (or $\C$-algebra) of dense $F$-injective \emph{type}, then $\Spec R$ has only \emph{Du Bois singularities} \cite{Sc-FiDB}.  (A converse is not known.)
\end{example*}

\begin{example*}[Tight closure]
By definition, a ring $R$ of prime characteristic $p>0$ is \emph{$F$-rational} if every parameter ideal is tightly closed.  

The connection to characteristic $0$ singularity theory is very strong.  Indeed, it was shown by \cite{SmFrat, Harat} (Smith showed one direction and Hara showed the converse) that a finitely generated $\Q$-algebra (or $\C$-algebra) $R$ is of $F$-rational \emph{type} if and only if $\Spec R$ has only \emph{rational singularities}.
\end{example*}

\begin{example*}[Integral closure]
Let $(R,\m)$ be a Noetherian local ring.  Then every parameter ideal is integrally closed if and only if $R$ is either a field or a rank 1 discrete valuation ring.

As usual, one direction is clear: if $R$ is a field or a rank 1 DVR, then \emph{every} ideal is integrally closed.  So we prove the converse.  We already know that $R$ must be a normal domain (from the integral closure example in \ref{sub:princlosed}), so we need only show that $\dim R \leq 1$.  Accordingly, suppose $\dim R \geq 2$.  Let $x,y$ be part of a system of parameters.  Then $x^2, y^2$ is also part of a system of parameters. Thus, we have \[
xy \in (x^2, y^2)^- = (x^2, y^2),
\]
But this is easily seen to contradict the fact that $x,y$ are a regular sequence (which in turn arises from the fact that $R$ is normal, hence (S$_2$).)
\end{example*}

\subsection{When is every ideal closed?}
\begin{example*}[Frobenius closure]
For a Noetherian ring $R$ of prime characteristic, we say that $R$ is \emph{$F$-pure} if the map $R \ra {}^1R$ defined in \ref{sub:fromcon}(\ref{it:Frob}) is a \emph{pure} map of $R$-modules -- that is, when tensored with any other $R$-module, the resulting map is always injective.  This is easily seen to be equivalent to the condition that Frobenius closure on \emph{modules} is trivial, which  for excellent rings is then equivalent  to the condition that all ideals are Frobenius closed (by \cite[Theorem 1.7]{Ho-purity}, since the rings in question must be reduced as $(0)^F$ is the nilradical of the ring).

Again, there is a connection to characteristic 0 singularity theory.  Namely, if $R$ is normal, $\Q$-Gorenstein, and characteristic 0 of dense $F$-pure type, then $\Spec R$ has only \emph{log canonical singularities} \cite{HaWa-Fsing}. (No converse is known.)
\end{example*}

\begin{example*}[Tight closure]
For a Noetherian ring $R$ of prime characteristic, we say that $R$ is \emph{weakly $F$-regular} if every ideal of $R$ is tightly closed.  It is called \emph{$F$-regular} if $R_\p$ is weakly $F$-regular for all prime ideals $\p$. (Or equivalently, $W^{-1}R$ is weakly $F$-regular for all multiplicative sets $W \subset R$.)  One of the outstanding open problems of tight closure theory is whether these two concepts are equivalent.  The problem is open even for finitely generated algebras over a field.

In any case, we again have a connection to characteristic 0 singularity theory.  Namely, if $R$ is normal, $\Q$-Gorenstein, and characteristic 0, then $R$ has $F$-regular \emph{type} if and only if $\Spec R$ has only \emph{log terminal singularities} \cite{HaWa-Fsing}.
\end{example*}

\begin{example*}[Integral closure]
As shown earlier, assuming that $R$ is Noetherian, all ideals are integrally closed iff $R$ is a Dedekind domain.  (More generally, all ideals are integrally closed iff $R$ is a \emph{Pr\"ufer domain}.)
\end{example*}

\begin{example*}[The $\rv$-operation]
Let $R$ be a Noetherian domain.  Then \cite[Theorem 3.8]{Mat-refl} all ideals are $\rv$-closed (equivalently $\rt$-closed) if and only if $R$ is Gorenstein of Krull dimension 1 (iff $Q/R$ is an injective $R$-module, where $Q$ is the total quotient ring of $R$).
\end{example*}

\section{Closure operations on (sub)modules}\label{sec:modules}
Whenever an author writes about a closure operation on (ideals of) rings that extends to a closure operation on (sub)modules, a choice must be made.  Either the operation is defined generally for all submodules, after which all proofs and statements are made for modules and the statements about ideals follow as  a corollary, or everything is done first exclusively on ideals and only afterwards is the reader shown how to extend it to the module case.  I have opted for the latter option.

\begin{defn}
Let $R$ be a ring and $\cM$ a category of $R$-modules.  A \emph{closure operation on $\cM$} is a collection of maps $\{\rcl_M \mid M \in \cM\}$, such that for any submodule $L \subseteq M$ of a module $M \in \cM$, $\rcl_M(L) := L^\rcl_M$ is a submodule of $M$, such that the following properties hold:\begin{enumerate}
\item (Extension) $L \subseteq L^\rcl_M$ for all submodules $L \subseteq M \in \cM$.
\item (Idempotence) $L^\rcl_M = (L^\rcl_M)^\rcl_M$ for all submodules $L \subseteq M \in \cM$.
\item (Order-preservation) If $M \in \cM$ and $K \subseteq L \subseteq M$ are submodules, then $K^\rcl_M \subseteq L^\rcl_M$.  If moreover $L\in \cM$, then $K^\rcl_L \subseteq K^\rcl_M$.
\end{enumerate}
\end{defn}

 Some other properties of closure operations on modules are given below:
 \begin{defn}
 Let $\rcl$ be a closure operation on a category $\cM$ of $R$-modules.  We say that $\rcl$ is \begin{enumerate}
 \item \emph{functorial} if whenever $g: M \ra N$ is a morphism in $\cM$, then for any submodule $L \subseteq M$, we have $g(L^\rcl_M) \subseteq g(L)^\rcl_N$.
 \item \emph{semi-prime} if whenever $L \subseteq M$ are modules with $M \in \cM$ and $I$ is an ideal, we have $I \cdot L^\rcl_M \subseteq (IL)^\rcl_M$.
 \item \emph{weakly hereditary} if whenever $L \subseteq M \subseteq N$ are submodules such that $M,N \in \cM$, if $L = L^\rcl_M$ and $M = M^\rcl_N$ then $L = L^\rcl_N$.
 \item \emph{hereditary} if whenever $L \subseteq M \subseteq N$ are submodules such that $M, N \in \cM$, then $L^\rcl_N \cap M = L^\rcl_M$.
  \item \emph{residual} if whenever $\pi: P \twoheadrightarrow M$ is a surjection of modules in $\cM$, and $N \subseteq P$ is a submodule, we have \[
 N^\rcl_P =\pi^{-1}\left(\pi(N)^\rcl_M\right)
 \]
  \end{enumerate}
 \end{defn}
 
\begin{rmk}\footnote{Thanks to Holger Brenner for pointing this out.}
We note here that any functorial closure is semi-prime, provided that for any $x\in R$ and $M \in \cM$, the endomorphism $\mu_x: M \ra M$ (defined by multiplying by the element $x$) is in $\cM$.  To see this, let $L \subseteq M$ be a submodule and $x \in I$.  Then $x \cdot L^\rcl_M = \mu_x(L^\rcl_M) \subseteq (\mu_x(L))^\rcl_M = (xL)^\rcl_M$.  But by order-preservation, $(xL)^\rcl_M \subseteq (IL)^\rcl_M$ for any $x\in I$.  Thus, we have \[
I \cdot L^\rcl_M = \sum_{x\in I} x \cdot L^\rcl_M \subseteq \sum_{x\in I} (x L)^\rcl_M \subseteq (IL)^\rcl_M.
\]

For systematic reasons, one usually assumes that a closure operation is functorial.  The disadvantage, of course, is that there is no way to extend a non-semi-prime closure on $R$ to a functorial module closure on $\cM$ (e.g. the $\rv$-operation on certain non-domains, as in Proposition~\ref{pr:vop}(3)). 
\end{rmk}
 
\begin{rmk}
Given a closure operation on (ideals of) $R$ that one wants to extend to a (functorial) closure on a category $\cM$ of $R$-modules, one typically must make a choice: should the resulting operation be \emph{weakly hereditary} or \emph{residual}?  Most closure operations on modules do not satisfies both conditions (but see \ref{sub:torth} below).  

Among integral closure-theorists, the usual goal has been to construct a \emph{weakly hereditary} operation (so that for instance, if $J$ is a reduction of an integrally closed ideal $I$, then the integral closure of $J$ as a \emph{submodule} of  $I$ is required to be $I$ itself).  See \cite{EHU-Ralg}, or \cite[chapter 16]{HuSw-book} for a general discussion.  

On the other hand, tight closure-theorists have gone the \emph{residual} route (so that for instance, to compute the tight closure of an ideal $I$, one may take the tight closure of $0$ in the $R$-submodule $R/I$ and contract back).  See \cite[Appendix 1]{HuTC}.  For a \emph{residual} version of integral closure, see \cite{nmeUlr-lint}, by the present author and Bernd Ulrich.
\end{rmk} 

\begin{lemma}[Taken from \cite{nmeHo-heq}]
Let $\cF$ (resp. $\cF_{\rm fin}$) be the category of free (resp. finitely generated free) $R$-modules.  Let $\rcl$ be a functorial closure operation on $\cF$ (resp. $\cF_{\rm fin}$).  Then $\rcl$ extends uniquely to a \emph{residual} closure operation on the category of all (resp. all finitely generated) $R$-modules if and only if for all (resp. for all finitely generated) $R$-modules $F_1, F_2$ and submodules $L_1 \subseteq F_1$, $L_2 \subseteq F_2$, we have \[
(L_1 \oplus L_2)^\rcl_{F_1 \oplus F_2} = (L_1)^\rcl_{F_1} \oplus (L_2)^\rcl_{F_2}.
\]
\end{lemma}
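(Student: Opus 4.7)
I would first assume $\rcl$ extends to a residual closure on all (resp.\ all finitely generated) modules and read off the direct sum identity as follows. The inclusion $(L_1)^\rcl_{F_1} \oplus (L_2)^\rcl_{F_2} \subseteq (L_1 \oplus L_2)^\rcl_{F_1 \oplus F_2}$ is immediate from order-preservation applied to the chains $L_i \subseteq F_i \subseteq F_1 \oplus F_2$ and $L_i \subseteq L_1 \oplus L_2 \subseteq F_1 \oplus F_2$. For the reverse inclusion I would apply residuality along each projection $p_i\colon F_1 \oplus F_2 \twoheadrightarrow F_i$: since $\ker p_1 = 0 \oplus F_2 \subseteq L_1 \oplus F_2$, residuality yields $(L_1 \oplus F_2)^\rcl_{F_1 \oplus F_2} = p_1^{-1}((L_1)^\rcl_{F_1}) = (L_1)^\rcl_{F_1} \oplus F_2$, and symmetrically for $F_1 \oplus L_2$. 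The submodule $L_1 \oplus L_2$ lies inside both intermediate submodules, so its closure lies in the intersection $(L_1)^\rcl_{F_1} \oplus (L_2)^\rcl_{F_2}$.

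\textbf{Backward direction --- the definition.} For the converse I would assume the direct sum identity and construct the extension one presentation at a time. For a module $M$ with a free presentation $\pi\colon F \twoheadrightarrow M$ and submodule $N \subseteq M$, the natural choice is
\[
N^\rcl_M := \pi\bigl((\pi^{-1}(N))^\rcl_F\bigr).
\]
Once independence of the chosen presentation is proven, all the remaining requirements --- extension, idempotence, both order-preservation properties, the residual property (apply to a free presentation of $P$, then compose with the given surjection $P \twoheadrightarrow M$), and uniqueness --- will fall out as short diagram chases, using the key identity $\pi^{-1}(N^\rcl_M) = (\pi^{-1}(N))^\rcl_F$ (which holds because $\ker \pi \subseteq \pi^{-1}(N) \subseteq (\pi^{-1}(N))^\rcl_F$ by extension on $\cF$). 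For uniqueness, for instance, any residual extension $\rd$ satisfies $(\pi^{-1}(N))^\rd_F = \pi^{-1}(N^\rd_M)$ by residuality, and the left-hand side equals $(\pi^{-1}(N))^\rcl_F$ because $\rd$ agrees with $\rcl$ on $F$, forcing $N^\rd_M = N^\rcl_M$.

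\textbf{Independence of presentation --- the main obstacle.} This is where the real work lies. Given two free presentations $\pi_i\colon F_i \twoheadrightarrow M$ for $i=1,2$, I would assemble them into $\pi_3\colon F_1 \oplus F_2 \twoheadrightarrow M$, $(x,y) \mapsto \pi_1(x) + \pi_2(y)$. Using freeness of $F_2$ I can lift $\pi_2$ to a map $\delta\colon F_2 \to F_1$ with $\pi_1 \delta = \pi_2$, and then define a surjection of free modules $\gamma_1\colon F_1 \oplus F_2 \twoheadrightarrow F_1$, $(x,y)\mapsto x + \delta(y)$, satisfying $\pi_1 \gamma_1 = \pi_3$; its kernel $\{(-\delta(y), y) : y \in F_2\}$ is isomorphic to $F_2$, hence itself free. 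Because $F_1$ is projective the sequence splits, giving an internal decomposition $F_1 \oplus F_2 = \ker \gamma_1 \oplus F_1'$ into free summands with $\gamma_1|_{F_1'}\colon F_1' \cong F_1$. The preimage $\pi_3^{-1}(N) = \gamma_1^{-1}(\pi_1^{-1}(N))$ contains $\ker \gamma_1$, so splits as $\ker \gamma_1 \oplus L$, where $L \subseteq F_1'$ is identified with $\pi_1^{-1}(N) \subseteq F_1$ via $\gamma_1$. The direct sum hypothesis applied to $\ker \gamma_1 \oplus F_1'$ then yields
\[
(\pi_3^{-1}(N))^\rcl_{F_1 \oplus F_2} = \ker \gamma_1 \oplus L^\rcl_{F_1'},
\]
and pushing forward by $\gamma_1$ (and using functoriality on $\cF$ through the isomorphism $F_1' \cong F_1$) gives $\gamma_1\bigl((\pi_3^{-1}(N))^\rcl_{F_1 \oplus F_2}\bigr) = (\pi_1^{-1}(N))^\rcl_{F_1}$. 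Composing with $\pi_1$ and using $\pi_1 \gamma_1 = \pi_3$, the closure of $N$ computed via $\pi_3$ equals the one computed via $\pi_1$; the symmetric construction of $\gamma_2\colon F_1 \oplus F_2 \twoheadrightarrow F_2$ shows it also equals the one via $\pi_2$. The crux of the whole argument --- and the reason the hypothesis must be stated for free modules --- is precisely engineering $\gamma_1$ so that $\ker \gamma_1$ itself lies in $\cF$; without a free kernel the direct sum hypothesis cannot be invoked and the independence argument collapses.
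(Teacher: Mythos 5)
Your proof is correct, and there is no proof in the paper to compare it against: the paper states only the construction $L^\rcl_M := \pi(\pi^{-1}(L)^\rcl_F)$ and writes ``We omit the easy proof,'' deferring to the (then in preparation) reference. Your outline supplies exactly what is missing. The forward direction reads the direct-sum identity off order-preservation (for $\supseteq$) and residuality applied along each projection $p_i$ with kernel inside the relevant submodule (for $\subseteq$); this is right. For the converse, you correctly identify independence of presentation as the one substantive step, and the device of lifting $\pi_2$ through $\pi_1$ to produce a surjection $\gamma_1 \colon F_1 \oplus F_2 \twoheadrightarrow F_1$ with $\pi_1\gamma_1 = \pi_3$ and a \emph{free} kernel is the right one: the internal splitting $F_1 \oplus F_2 = \ker\gamma_1 \oplus F_1'$ with both summands free, together with $\pi_3^{-1}(N) = \ker\gamma_1 \oplus L$ where $\gamma_1$ carries $L$ isomorphically onto $\pi_1^{-1}(N)$, lets you invoke the direct-sum hypothesis and functoriality through the isomorphism $F_1' \cong F_1$. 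Your key identity $\pi^{-1}(N^\rcl_M) = (\pi^{-1}(N))^\rcl_F$ then makes residuality, idempotence and uniqueness routine, as you say.

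One peripheral remark, which concerns the paper's wording rather than your argument: the definition of ``residual'' as literally stated would force $0^\rcl_P = P$ (take $\pi \colon P \twoheadrightarrow 0$), so it must be read with the implicit hypothesis $\ker\pi \subseteq N$, consistent with the construction. Every application of residuality in your proof does respect this, so no gap results; but it is worth being aware that the property as written in the paper is stronger than what the construction provides. Likewise you are right that the direct-sum hypothesis must be read with $F_1, F_2$ free (it is stated for ``$R$-modules,'' presumably a slip), since $\rcl$ is a priori defined only on $\cF$; your backward direction uses it only for free summands, as it must.
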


We omit the easy proof.  As for the construction: For a (f.g.) $R$-module $M$, let $\pi: F \ra M$ be a surjection from a (f.g.) $R$-module $F$, and let $L^\rcl_M := \pi(\pi^{-1}(L)^\rcl_F)$.

Indeed, this is the way that tight closure, Frobenius closure, and plus closure are defined on modules.  First they were defined for free modules, and then extended to a residual operation in the way outlined above.

\begin{rmk}\label{rmk:residualexamples}
All but one of the constructions from \S\ref{sec:cons} can be extended to make \emph{residual} closure operations.  Namely: \begin{itemize}
\item Construction~\ref{con:module} may be extended as follows: Fix an $R$-module $U$ as before, and let $L \subseteq M$ be $R$-modules.  We say that $f \in L^\rcl_M$ if the image of the map $U \otimes_R (Rf \hookrightarrow M)$ is contained in the image of the map $U \otimes_R (L \hookrightarrow M)$.  This always yields a residual and functorial closure operation.
\item Construction~\ref{con:contraction} may be extended similarly.  That is, if $\phi:R \ra S$ is a ring homomorphism and $\rd$ is a closure operation on $S$-modules, then for $R$-modules $L \subseteq M$, let $j_M: M \ra S \otimes_R M$ be the natural map, and for $z\in M$, we declare that $z\in L^\rc_M$ if the image (under $j_M$) of $z$ is in the $\rd$-closure (in $S \otimes_R M$) of the image of the map $S \otimes_R (L \hookrightarrow M)$.  Then $\rc$ is a closure operation, semi-prime if $\rd$ is, functorial if $\rd$ is, and residual if $\rd$ is.
\item Constructions~\ref{con:intersection}, \ref{con:directunion}, and \ref{con:idemhull} may be extended in the obvious way to modules, and the resulting operation is semi-prime (resp. residual, resp. functorial) if the operations it is based on are.
\end{itemize}
In this way (via \ref{sub:fromcon}), all the closure operations from Example~\ref{ex:closures} may be extended to residual, functorial closure operations on modules.  This yields the usual definitions of Frobenius, plus, solid, tight, and basically full closures, and also of the $\ia$-saturation.  The resulting extension of integral closure is of course not the usual (weakly hereditary) definition, but rather the residual, ``liftable integral closure'' from \cite{nmeUlr-lint}.  The extension of radical is the one mentioned in \cite{Br-Groth}.

An analogue of Proposition~\ref{pr:prdec}(parts 1 through 4) is also available for semi-prime closure operations on modules.
\end{rmk}

One more construction should be mentioned:
\begin{construction}\label{con:finitistic}
Let $\rc$ be a closure operation on a module category $\cM$.  We define the \emph{finitistic} $\rc$-closure of a submodule $L \subseteq M$ by setting \[
L^{\rc\fg}_M := \bigcup \{L^\rc_N \mid L \subseteq N \subseteq M \text{ such that }  N/L \text{ is finitely generated and } N \in \cM\}
\]
\end{construction}

For a closure operation $\rc$, one may ask what the difference is, if any, between the closure operations $\rc$ and $\rc\fg$.  This is very important in tight closure theory, for instance, in the question of whether weak $F$-regularity and $F$-regularity coincide.  Indeed, if $(R,\m)$ is an $F$-finite Noetherian local ring of characteristic $p$ and $E$ is the injective hull of the residue field $R/\m$, then $R$ is weakly $F$-regular if and only if $0^{*\fg}_E = 0$ \cite[Proposition 8.23]{HHmain}, but it is $F$-regular if $0^*_E = 0$ \cite[follows from Proposition 2.4.1]{LySmFreg}.  Thus, the two concepts would coincide if $0^*_E = 0^{*\fg}_E$.

\subsection{Torsion theories}\label{sub:torth}

A functorial closure operation on modules that is both residual and weakly hereditary is essentially equivalent to the notion of a \emph{torsion theory} (see \cite{Dic-torcat} for torsion theories, the book \cite{BKN-preradbook} for the connection with preradicals, or \cite[section 6]{Br-Groth} for a connection with Grothendieck topologies).  We briefly outline the situation below, as this provides a fertile source of residual closure operations:

\begin{defn}
Let $\cM$ be a category of $R$-modules which is closed under taking quotient modules, and let $\br: \cM \ra \cM$ be a subfunctor of the identity functor (That is, it assigns to each $M \in \cM$ a submodule $\br(M) \in \cM$ of $M$, and every map $g: M \ra N$ in $\cM$ restricts to a map $\br(g): \br(M) \ra \br(N)$.)  Then we say that $\br$ is a \emph{preradical}.

A preradical $\br$ is called a \emph{radical} if for any $M \in \cM$, we have $\br(M/\br(M)) = 0$.

A radical $\br$ is \emph{idempotent} if $\br \circ \br = \br$.

A radical $\br$ is \emph{hereditary} if for any submodule inclusion $L \subseteq M$ with $L, M \in \cM$, we have $L \cap \br(M) = \br(L)$.
\end{defn}

\begin{defn}
Let $\cM$ be an Abelian category.  Then a \emph{torsion theory} is a pair $(\cT, \cF)$ of classes of objects of $\cM$ (called the \emph{torsion} objects and the \emph{torsion-free} objects, respectively), such that: \begin{enumerate}
\item $\cT \cap \cF = \{0\}$,
\item $\cT$ is closed under isomorphisms and quotient objects,
\item $\cF$ is closed under isomorphisms and subobjects, and
\item For any object $M \in \cM$, there is a short exact sequence \[
0 \ra T \ra M \ra F \ra 0
\]
with $T \in \cT$ and $F \in \cF$.
\end{enumerate}
(Recall that the $T$ and $F$ in the short exact sequence are unique up to isomorphism.)

A torsion theory $(\cT, \cF)$ is \emph{hereditary} if $\cT$ is also closed under taking subobjects.
\end{defn}

\begin{prop}
Given an Abelian category $\cM$ of $R$-modules, the following structures are equivalent: \begin{itemize}
\item a functorial, residual closure operation on $\cM$.
\item a radical on $\cM$.
\end{itemize}
Moreover, the closure operation is \emph{weakly hereditary} if and only if the radical is \emph{idempotent}.  In this case, the structures are equivalent to specifying a \emph{torsion theory} on $\cM$.  In this case, the closure operation is hereditary iff the radical is hereditary iff the torsion theory is hereditary.
\end{prop}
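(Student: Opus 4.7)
The plan is to construct an explicit bijection between functorial, residual closure operations $\rcl$ on $\cM$ and radicals $\br$ on $\cM$, and then to read off each refinement in the statement by matching a condition on one side against its counterpart on the other. The bijection is $\rcl \mapsto \br$ via $\br(M) := 0^\rcl_M$, with inverse $\br \mapsto \rcl$ defined by $L^\rcl_M := \pi_L^{-1}(\br(M/L))$, where $\pi_L : M \twoheadrightarrow M/L$ is the quotient.

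First I would verify these assignments are well-defined and mutually inverse. In the forward direction, functoriality of $\rcl$ makes $\br(M) = 0^\rcl_M$ a subfunctor of the identity, and the radical property $\br(M/\br(M)) = 0$ drops out of residuality applied to $\pi : M \twoheadrightarrow M/\br(M)$ with $N = \br(M)$: idempotence gives $(0^\rcl_M)^\rcl_M = 0^\rcl_M$, while residuality rewrites the left-hand side as $\pi^{-1}(0^\rcl_{M/\br(M)})$, forcing $0^\rcl_{M/\br(M)} = 0$. In the reverse direction, extension and order-preservation (in both submodule and ambient-module slots) and functoriality of $\rcl$ follow immediately from $\br$ being a subfunctor of the identity; idempotence of $\rcl$ reduces to the radical property for $M/L$; and residuality is essentially built into the formula, since $L^\rcl_M$ depends only on the quotient $M/L$. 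The two compositions are straightforward identity checks.

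Next I would handle the weakly hereditary vs.\ idempotent equivalence by showing both conditions translate, through the bijection, into the class $\cF := \{M \in \cM : \br(M) = 0\}$ being closed under extensions. If $\rcl$ is weakly hereditary and $0 \to A \to B \to C \to 0$ is exact with $\br(A) = \br(C) = 0$, then residuality applied to $B \twoheadrightarrow C$ gives $A^\rcl_B = A$ (since $0^\rcl_C = 0$); thus $0$ is $\rcl$-closed in $A$ and $A$ is $\rcl$-closed in $B$, so weak hereditarity forces $0^\rcl_B = 0$, i.e., $\br(B) = 0$. Conversely, if $\br$ is idempotent and $L \subseteq M \subseteq N$ satisfy $L = L^\rcl_M$ and $M = M^\rcl_N$, then $\br(M/L) = 0$ and $\br(N/M) = 0$, and the short exact sequence $0 \to M/L \to N/L \to N/M \to 0$ together with closure of $\cF$ under extensions yields $\br(N/L) = 0$, i.e., $L = L^\rcl_N$. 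For the torsion-theory correspondence in the idempotent regime, I would use the classical recipe: set $\cT := \{M : \br(M) = M\}$, take the torsion SES to be $0 \to \br(M) \to M \to M/\br(M) \to 0$, and check the four axioms (idempotence puts $\br(M) \in \cT$, the radical property puts $M/\br(M) \in \cF$, functoriality yields $\Hom(T,F) = 0$); the inverse assignment sends a torsion theory to its torsion-subobject functor.

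Finally, for the hereditary claim, the identity $L^\rcl_N \cap M = L^\rcl_M$ (for $L \subseteq M \subseteq N$) translates, via the residual formula, into $\br(L) = L \cap \br(M)$ for all $L \subseteq M$. A hereditary radical is automatically idempotent (take $L := \br(M)$ to see $\br(\br(M)) = \br(M) \cap \br(M) = \br(M)$), so the torsion-theory hypothesis is met, and hereditarity of $\br$ corresponds transparently to $\cT$ being closed under subobjects. The main obstacle I anticipate is not any single verification but the bookkeeping around residuality: one must check that $N^\rcl_P = \pi^{-1}(\pi(N)^\rcl_M)$ is, in effect, governed by the special case $\pi = \pi_N$ (equivalently, the nontrivial content of residuality lies in surjections with $\ker\pi \subseteq N$), so that closure is determined by its action on quotients. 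Once this compatibility is pinned down, the rest of the equivalences fall into place.
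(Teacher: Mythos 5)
Your proposal is correct and follows essentially the same route as the paper: the paper simply exhibits the correspondences $\br(M) := 0^\rcl_M$, $L^\rcl_M := \pi^{-1}(\br(M/L))$, and the torsion class/torsion-subobject assignments, leaving the verifications to the reader, and your correspondences are exactly these. Your added details (the translation of weak hereditarity and idempotence into extension-closedness of $\cF$, the heredity dictionary $\br(L) = L \cap \br(M)$, and the observation that residuality is governed by surjections with $\ker\pi \subseteq N$) are sound fillings-in of what the paper omits.
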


\begin{proof}
Instead of a complete proof, we show the correspondences below and leave the elementary proofs to the reader.

If $\rc$ is a functorial, residual closure operation on $\cM$, we define a radical $\br$ on $\cM$ by letting $\br(M) := 0^\rc_M$ for any $M \in \cM$.

If $\br$ is a radical on $\cM$, we define a closure operation on $\cM$ by letting $L^\rc_M := \pi^{-1}(\br(M/L))$ for any $L \subseteq M$, where $\pi: M \ra M/L$ is the canonical surjection..

Given an idempotent radical $\br$ on $\cM$, we define a torsion theory by letting $\cT := \{\br(M) \mid M \in \cM\}$ and $\cF := \{M \in \cM \mid \br(M) =0\}$.  Conversely, given a torsion theory $(\cT, \cF)$, we let $\br(M) := T$, where $0 \ra T \ra M \ra F \ra 0$ is a short exact sequence with $T \in \cT$ and $F \in \cF$.
\end{proof}

The literature on torsion theories is immense.  At the time of publication, I could not find the first place where it is shown that specifying a \emph{hereditary} radical is equivalent to specifying a certain kind of filter of ideals, but this is nevertheless true, and was known by the late 1960s.

None of tight closure, plus closure, Frobenius closure, or radical (as given in Remark~\ref{rmk:residualexamples}) are weakly hereditary; hence none of them provide examples of torsion theories. However, we have the following illustrative examples:

\begin{itemize}
\item For any ideal $\ia$ of $R$, the \emph{$\ia$-saturation} (defined by $L^\rc_M := (L :_M \ia^\infty) = \{z \in M \mid \exists n \in \N \text{ such that } \ia^n z \subseteq L\}$) is a hereditary, residual, functorial closure operation on $R$-modules, and thus provides an example of a hereditary torsion theory on $R$-modules.  (The corresponding radical is $H^0_\ia(-)$.)
\item Let $R$ be a commutative ring that contains at least one non-zerodivisor $x$ that is not a unit, and let $Q$ be its total ring of quotients (so that in particular, $Q \neq R$).  Recall that a module $M$ is \emph{divisible} if for every non-zerodivisor $r$ of $R$, the map $M \ra M$ given by multiplication with $r$ is \emph{surjective}.  Any module has a unique \emph{largest} divisible submodule, since any sum of divisible submodules of a module is divisible.  Consider the assignment $\bd$ given by $\bd(M) := $ the largest divisible submodule of $M$.  Then this is a weakly hereditary radical that is not hereditary.  Hence, it defines a weakly hereditary residual closure operation that is not hereditary, hence a non-hereditary torsion theory.  To see that it is not hereditary, note that $\bd(Q) \cap R = R$ but $1 \notin \bd(R)$ since $x$ is not a unit.
\end{itemize}

\section*{Acknowledgements}
The author (and therefore the paper itself) benefitted from conversations with Holger Brenner, Karl Schwede, and Janet Vassilev, among others, as well as from the referee's careful comments.  Many thanks!

\providecommand{\bysame}{\leavevmode\hbox to3em{\hrulefill}\thinspace}
\providecommand{\MR}{\relax\ifhmode\unskip\space\fi MR }
\providecommand{\MRhref}[2]{%
  \href{http://www.ams.org/mathscinet-getitem?mr=#1}{#2}
}
\providecommand{\href}[2]{#2}

\end{document}